\newtheorem{thm}{Theorem}[section]
\newtheorem{theorem}[thm]{Theorem}
\newtheorem{lemma}[thm]{Lemma}
\newtheorem{remark}[thm]{Remark}
\newtheorem{corollary}[thm]{Corollary}
\theoremstyle{definition}
\newtheorem{definition}[thm]{Definition}
\newtheorem{notation}[thm]{Notation}
\newtheorem{example}[thm]{Example}
\theoremstyle{definition}
\newcommand\Z{\mathbb{Z}}
\newcommand\N{\mathbb{N}}
\newcommand\R{\mathbb{R}}
\newcommand\D{\mathcal{D}}
\renewcommand\O{\mathcal{O}}
\newcommand\eps{\varepsilon}
\newcommand{\tcs}{\setlength{\tabcolsep}{-0.7em}}
\newcommand{\linspan}[1]{\left< #1 \right>}
\newcommand{\ray}[1]{\R_+ #1 }
\newcommand{\Onreg}{O'}
\DeclareMathOperator{\conv}{conv}
\DeclareMathOperator{\C}{C}
\DeclareMathOperator{\Dih}{D}
\DeclareMathOperator{\T}{T}
\DeclareMathOperator{\Oc}{O}
\DeclareMathOperator{\I}{I}
\DeclareMathOperator{\G}{G}
\DeclareMathOperator{\Cf}{Cf} %intcone spanned by (2,1),(1,2)
\DeclareMathOperator{\RP}{RP} %regular prism
\DeclareMathOperator{\HP}{HP} %half prism
\DeclareMathOperator{\THP}{THP} %twisted half prism
\DeclareMathOperator{\TP}{TP} %twisted prism
\DeclareMathOperator{\BP}{BP} % big prism: 2k-gon on k-gon
\DeclareMathOperator{\CS}{CS} %careful stacking
\DeclareMathOperator{\CC}{CC} %careful cutting
\DeclareMathOperator{\ver}{vert}
\DeclareMathOperator{\relint}{relint}
\DeclareMathOperator{\normals}{normals} %outer normals
\DeclareMathOperator{\Symm}{Symm} %Symmetry group
\newcommand{\f}{\mathit{f}} %f-vector
\newcommand{\F}{\mathit{F}} %set of f-vectors
\newcommand{\lt}{left type}
\newcommand{\rt}{right type}
\newcommand{\bt}{base} %{base type}
\newcommand{\defRoot}[1]{
\def\root{#1}
}
\newcommand{\defLR}[2]{
\def\cL{#1}
\def\cR{#2}
}
\newcommand{\defLRTX}[4]{
\def\cL{#1}
\def\cR{#2}
\def\cT{#3}
\def\cX{#4}
}
\newcommand{\defT}[1]{
\def\cT{#1}
}
\newcommand{\certLR}{
%\begin{center}
%\hspace{-1cm}
\begin{adjustbox}{valign=t}
\begin{tikzpicture}
    \node at (0,0.5) {\root};
		\node (L) at (0,0) {\cL};
		\node at (0,-0.3) {\tiny\bf L};
		\node (R) at (1,-1) {\cR} edge[bend right = 20] (L);
		\node at (1,-1.3) {\tiny\bf R};
\end{tikzpicture}
\end{adjustbox}
%\end{center}
}
\newcommand{\certRL}{
%\begin{center}
%\hspace{-1cm}
\begin{adjustbox}{valign=t}
\begin{tikzpicture}
    \node at (0,0.5) {\root};
		\node (R) at (0,0) {\cR};
		\node at (0,-0.3) {\tiny\bf R};
		\node (L) at (-1,-1) {\cL} edge[bend left = 20] (R);
		\node at (-1,-1.3) {\tiny\bf L};
\end{tikzpicture}
\end{adjustbox}
%\end{center}
}
\newcommand{\certTri}{
%\begin{center}
%\hspace{-1cm}
\begin{adjustbox}{valign=t}
\begin{tikzpicture}
  \node at (0,0.5) {\root};
	\node (T) at (0,0) {\cT};
	\node at (0,-0.3) {\tiny\bf X};
	\node (L) at (-1,-1) {\cL} edge[bend left = 20] (T);
	\node at (-1,-1.3) {\tiny\bf L};
	\node (R) at (1,-1) {\cR} edge[bend right = 20] (T);
	\node at (1,-1.3) {\tiny\bf R};
	\node (X) at (0,-2) {\cX} edge[bend left = 20] (R);
	\node at (0,-2.3) {\tiny\bf X};
	\draw (L) edge[bend left = 20] (X);
\end{tikzpicture}
\end{adjustbox}
%\end{center}
}
\newcommand{\certBtikz}{
\begin{tikzpicture}
  \node at (0,0.5) {\root};
	\node (T) at (0,0) {\cT};
	\node at (0,-0.3) {\tiny\bf B};
\end{tikzpicture}
}
\newcommand{\certB}{
%\begin{center}
%\hspace{-1cm}
\begin{adjustbox}{valign=t}
\certBtikz
\end{adjustbox}
%\end{center}
}
\newcommand{\RPpic}{
\begin{center}
\begin{tikzpicture}
%\DIVIDE{\sqrtthree}{3}{\solution}
\coordinate (A) at (-1,-1/3*\sqrtthree);
\coordinate (B) at (1,-1/3*\sqrtthree);
\coordinate (C) at (0,2/3*\sqrtthree);
\coordinate (Ap) at (-0.5,-0.5*1/3*\sqrtthree);
\coordinate (Bp) at (0.5, -0.5*1/3*\sqrtthree);
\coordinate (Cp) at (0,0.5*2/3*\sqrtthree);
\draw (A)--(B)--(C)--(A);
\draw (Ap)--(Bp)--(Cp)--(Ap);
\draw (A)--(Ap);
\draw (B)--(Bp);
\draw (C)--(Cp);
\end{tikzpicture}
\hspace{2cm}
\begin{tikzpicture}
\coordinate (A) at (-1,-1);
\coordinate (B) at (1,-1);
\coordinate (C) at (1,1);
\coordinate (D) at (-1,1);
\coordinate (Ap) at (-0.5,-0.5);
\coordinate (Bp) at (0.5, -0.5);
\coordinate (Cp) at (0.5,0.5);
\coordinate (Dp) at (-0.5,0.5);
\draw (A)--(B)--(C)--(D)--(A);
\draw (Ap)--(Bp)--(Cp)--(Dp)--(Ap);
\draw (A)--(Ap);
\draw (B)--(Bp);
\draw (C)--(Cp);
\draw (D)--(Dp);
\end{tikzpicture}
\end{center}}
\newcommand{\TPpic}{
\begin{center}
\begin{tikzpicture}
%\DIVIDE{\sqrtthree}{3}{\solution}
\coordinate (A) at (-1.5*1,-1.5*1/3*\sqrtthree);
\coordinate (B) at (1.5*1,-1.5*1/3*\sqrtthree);
\coordinate (C) at (0,1.5*2/3*\sqrtthree);
\begin{scope}[rotate = 60]
\coordinate (Ap) at (-0.5,-0.5*1/3*\sqrtthree);
\coordinate (Bp) at (0.5, -0.5*1/3*\sqrtthree);
\coordinate (Cp) at (0,0.5*2/3*\sqrtthree);
\end{scope}
\draw (A)--(B)--(C)--(A);
\draw (Ap)--(Bp)--(Cp)--(Ap);
\draw (A)--(Cp);
\draw (A)--(Ap);
\draw (B)--(Ap);
\draw (B)--(Bp);
\draw (C)--(Bp);
\draw (C)--(Cp);
\end{tikzpicture}
\hspace{2cm}
\begin{tikzpicture}
\coordinate (A) at (-1,-1);
\coordinate (B) at (1,-1);
\coordinate (C) at (1,1);
\coordinate (D) at (-1,1);
\begin{scope}[rotate = 45]
\coordinate (Ap) at (-0.5,-0.5);
\coordinate (Bp) at (0.5, -0.5);
\coordinate (Cp) at (0.5,0.5);
\coordinate (Dp) at (-0.5,0.5);
\end{scope}
\draw (A)--(B)--(C)--(D)--(A);
\draw (Ap)--(Bp)--(Cp)--(Dp)--(Ap);
\draw (A)--(Dp);
\draw (A)--(Ap);
\draw (B)--(Ap);
\draw (B)--(Bp);
\draw (C)--(Bp);
\draw (C)--(Cp);
\draw (D)--(Cp);
\draw (D)--(Dp);
\end{tikzpicture}
\end{center}}
\newcommand{\BPpic}{
\begin{center}
\begin{tikzpicture}
\DIVIDE{\sqrtthree}{3}{\solution}
\coordinate (A) at (-1,-1/3*\sqrtthree);
\coordinate (B) at (1,-1/3*\sqrtthree);
\coordinate (C) at (0,2/3*\sqrtthree);
\coordinate (Ap1) at (-0.5*2/3,0);
\coordinate (Ap2) at (-0.5*1/3,-0.5*1/3*\sqrtthree);
\coordinate (Bp1) at (0.5*1/3, -0.5*1/3*\sqrtthree);
\coordinate (Bp2) at (0.5*2/3, 0);
\coordinate (Cp1) at (0.5*1/3,0.5*1/3*\sqrtthree);
\coordinate (Cp2) at (-0.5*1/3,0.5*1/3*\sqrtthree);
\draw (A)--(B)--(C)--(A);
\draw (Ap1)--(Ap2)--(Bp1)--(Bp2)--(Cp1)--(Cp2)--(Ap1);
\draw (A)--(Ap1);
\draw (A)--(Ap2);
\draw (B)--(Bp1);
\draw (B)--(Bp2);
\draw (C)--(Cp1);
\draw (C)--(Cp2);
\end{tikzpicture}
\hspace{2cm}
\begin{tikzpicture}
\coordinate (A) at (-1,-1);
\coordinate (B) at (1,-1);
\coordinate (C) at (1,1);
\coordinate (D) at (-1,1);
\coordinate (Ap1) at (-0.5,-0.5*1/3);
\coordinate (Ap2) at (-0.5*1/3,-0.5);
\coordinate (Bp1) at (0.5*1/3, -0.5);
\coordinate (Bp2) at (0.5, -0.5*1/3);
\coordinate (Cp1) at (0.5,0.5*1/3);
\coordinate (Cp2) at (0.5*1/3,0.5);
\coordinate (Dp1) at (-0.5*1/3,0.5);
\coordinate (Dp2) at (-0.5,0.5*1/3);
\draw (A)--(B)--(C)--(D)--(A);
\draw (Ap1)--(Ap2)--(Bp1)--(Bp2)--(Cp1)--(Cp2)--(Dp1)--(Dp2)--(Ap1);
\draw (A)--(Ap1);
\draw (A)--(Ap2);
\draw (B)--(Bp1);
\draw (B)--(Bp2);
\draw (C)--(Cp1);
\draw (C)--(Cp2);
\draw (D)--(Dp1);
\draw (D)--(Dp2);
\end{tikzpicture}
\end{center}}
\newcommand{\HPpic}{
\begin{center}
\begin{tikzpicture}
\DIVIDE{\sqrtthree}{3}{\solution}
\coordinate (A) at (-0.5*1,-0.5*1/3*\sqrtthree);
\coordinate (B) at (0.5*1,-0.5*1/3*\sqrtthree);
\coordinate (C) at (0,0.5*2/3*\sqrtthree);
\coordinate (Ap1) at (-1.5*2/3,0);
\coordinate (Ap2) at (-1.5*1/3,-1.5*1/3*\sqrtthree);
\coordinate (Bp1) at (1.5*1/3, -1.5*1/3*\sqrtthree);
\coordinate (Bp2) at (1.5*2/3, 0);
\coordinate (Cp1) at (1.5*1/3,1.5*1/3*\sqrtthree);
\coordinate (Cp2) at (-1.5*1/3,1.5*1/3*\sqrtthree);
\draw (A)--(B)--(C)--(A);
\draw (Ap1)--(Ap2)--(Bp1)--(Bp2)--(Cp1)--(Cp2)--(Ap1);
\draw (A)--(Ap1);
\draw (A)--(Ap2);
\draw (B)--(Bp1);
\draw (B)--(Bp2);
\draw (C)--(Cp1);
\draw (C)--(Cp2);
\end{tikzpicture}
\hspace{2cm}
\begin{tikzpicture}
\coordinate (A) at (-0.5*1,-0.5*1);
\coordinate (B) at (0.5*1,-0.5*1);
\coordinate (C) at (0.5*1,0.5*1);
\coordinate (D) at (-0.5*1,0.5*1);
\coordinate (Ap1) at (-1,-1/3);
\coordinate (Ap2) at (-1/3,-1);
\coordinate (Bp1) at (1/3, -1);
\coordinate (Bp2) at (1, -1/3);
\coordinate (Cp1) at (1,1/3);
\coordinate (Cp2) at (1/3,1);
\coordinate (Dp1) at (-1/3,1);
\coordinate (Dp2) at (-1,1/3);
\draw (A)--(B)--(C)--(D)--(A);
\draw (Ap1)--(Ap2)--(Bp1)--(Bp2)--(Cp1)--(Cp2)--(Dp1)--(Dp2)--(Ap1);
\draw (A)--(Ap1);
\draw (A)--(Ap2);
\draw (B)--(Bp1);
\draw (B)--(Bp2);
\draw (C)--(Cp1);
\draw (C)--(Cp2);
\draw (D)--(Dp1);
\draw (D)--(Dp2);
\end{tikzpicture}
\end{center}}
\newcommand{\THPpic}{
\begin{center}
\begin{tikzpicture}
\begin{scope}[rotate = 30]
\coordinate (A) at (-0.5*1,-0.5*1/3*\sqrtthree);
\coordinate (B) at (0.5*1,-0.5*1/3*\sqrtthree);
\coordinate (C) at (0,0.5*2/3*\sqrtthree);
\end{scope}
\coordinate (Ap1) at (-1.5*2/3,0);
\coordinate (Ap2) at (-1.5*1/3,-1.5*1/3*\sqrtthree);
\coordinate (Bp1) at (1.5*1/3, -1.5*1/3*\sqrtthree);
\coordinate (Bp2) at (1.5*2/3, 0);
\coordinate (Cp1) at (1.5*1/3,1.5*1/3*\sqrtthree);
\coordinate (Cp2) at (-1.5*1/3,1.5*1/3*\sqrtthree);
\draw (A)--(B)--(C)--(A);
\draw (Ap1)--(Ap2)--(Bp1)--(Bp2)--(Cp1)--(Cp2)--(Ap1);

\draw (A)--(Ap1);
\draw (A)--(Ap2);
\draw (A)--(Bp1);
\draw (B)--(Bp1);
\draw (B)--(Bp2);
\draw (B)--(Cp1);
\draw (C)--(Cp1);
\draw (C)--(Cp2);
\draw (C)--(Ap1);
\end{tikzpicture}
\hspace{2cm}
\begin{tikzpicture}
\begin{scope}[rotate = 22.5]
\coordinate (A) at (-0.5*1,-0.5*1);
\coordinate (B) at (0.5*1,-0.5*1);
\coordinate (C) at (0.5*1,0.5*1);
\coordinate (D) at (-0.5*1,0.5*1);
\end{scope}
\coordinate (Ap1) at (-1,-1/3);
\coordinate (Ap2) at (-1/3,-1);
\coordinate (Bp1) at (1/3, -1);
\coordinate (Bp2) at (1, -1/3);
\coordinate (Cp1) at (1,1/3);
\coordinate (Cp2) at (1/3,1);
\coordinate (Dp1) at (-1/3,1);
\coordinate (Dp2) at (-1,1/3);
\draw (A)--(B)--(C)--(D)--(A);
\draw (Ap1)--(Ap2)--(Bp1)--(Bp2)--(Cp1)--(Cp2)--(Dp1)--(Dp2)--(Ap1);
\draw (A)--(Ap1);
\draw (A)--(Ap2);
\draw (A)--(Bp1);
\draw (B)--(Bp1);
\draw (B)--(Bp2);
\draw (B)--(Cp1);
\draw (C)--(Cp1);
\draw (C)--(Cp2);
\draw (C)--(Dp1);
\draw (D)--(Dp1);
\draw (D)--(Dp2);
\draw (D)--(Ap1);
\end{tikzpicture}
\end{center}}
\newcommand{\HasseDiagramOfOrthogonalGroups}{
\begin{center}
\begin{tikzpicture}[scale = 1.5]
\node (C2) at (1,0) {$\C_2$};
\node (C3) at (2,1) {$\C_3$};
\node (C5) at (3,1) {$\C_5$};
\node (C4) at (0,1) {$\C_4$};
\node (D2) at (1,1) {$\Dih_2$};
\node (D5) at (3,2) {$\Dih_5$};
\node (D4) at (0,2) {$\Dih_4$};
\node (T) at (2,2) {$\T$};
\node (O) at (1,3) {$\Oc$};
\node (I) at (2.5,3) {$\I$};

\draw (C2)--(C4)--(D4) -- (O);
\draw (C2) -- (D2) -- (T) -- (O);
\draw (D2) -- (D4);
\draw (C3)--(T)--(O);
\draw (T)--(I);
\draw (C5)--(D5)--(I);
\end{tikzpicture}
\end{center}}
\begin{document}

\author{Maren H. Ring\footnote{University of Rostock, maren.ring@uni-rostock.de}, Robert Schüler\footnote{University of Rostock, robert.schueler1989@gmail.com}}

%\address{% 
%Institute for Mathematics, 
%University of Rostock,
%18057 Rostock,
%Germany}
%\email{maren.ring@uni-rostock.de}
%\email{robert.schueler1989@gmail.com}

%\keywords{polytopes, f-vectors} 
%
%\subjclass[2010]{52C, 52B, 11H}
%
 %52Cxx Discrete geometry
 %11Hxx Geometry of Numbers
 %52Bxx Polytopes and Polyhedra

\title{f-vectors of 3-polytopes symmetric under rotations and rotary reflections}
\date{\today}

\maketitle
\defRoot{$\f$}

%- Fragestellung

%- Grundstruktur: aufsplittung in Translate des cones k*Cf

%- small orbit, degenerate, non-degenerate ersetzen durch regular etc.

%- reduce to the case of orthogonal groups by changing the inner product

%- list of all possible finite subgroups of $\O(\R^3)$ (Theorem 2.5.2 from Groove Benson, Finite Reflection Groups)

%- Theorem: Ergebnisse

%- Zerlegung: Lemma 1

%- Zertifikate

%- Bemerkung: Wenn Operation {\lt}, dann duale Operation {\rt}

%- Def degree (Kantengrad)

%- Def simple vertex, simplicial facet (kann man glaube ich Vorraussetzen)

%- Def Big prism (2k-gon on k-gon)

%- OBdA $0\in P$

%- Define Polytope, faces, dimension, dual of a polytope

%- Define $\ver(P)$ and $\normals(P)$

%- Define flip axis

%(-  Minkowski sum: erstes Vorkommen erklären)

\begin{abstract}
The $f$-vector of a polytope consists of the numbers of its $i$-dimensional faces. An open field of study is the characterization of all possible $f$-vectors. It has been solved in three dimensions by Steinitz in the early 19th century. We state a related question, i.e. to characterize $f$-vectors of three dimensional polytopes respecting a symmetry, given by a finite group of matrices. 
We give a full answer for all three dimensional polytopes that are symmetric with respect to a finite rotation or rotary reflection group. 
We solve these cases constructively by developing tools that generalize Steinitz's approach. %Furthermore, we provide some details on how to use these methods in the remaining cases of symmetries, i.e. for groups containing reflections.
\end{abstract}

\section{Introduction}

%polytopes
%
%symmetries 
%
%useful
%
%pretty
%
%
%f-vectors, Theorems, structure steinitz
%
%refs studies in higher dimensions 
%\cite{billera1981simplicial} Billera, Lee simplicial
%
%\cite{brinkmann2018small} small $f$-vectors of 4-polytopes
%
%\cite{ziegler2002fatness} fatness, complexity 4-polytopes
%
%Mani, orthogonal groups, grove benson
%
%Mani: In \ref{M71} \note{ref} it is shown that for every abstract 3-polytope, invariant under some combinatorial symmetry, there is a realization that is symmetric under an orthogonal symmetry that in turn induces the given combinatorial symmetry.

%In this paper we characterize $f$-vectors of 3-dimensional polytopes that are symmetric with respect to a finite rotation or rotary reflection group. 

There are many studies of $f$-vectors in higher dimensions, for example see \cite{grunbaum2003convex}, \cite{mcmullen1971minimum_vertices}, \cite{barnette1972inequalities}, \cite{barnette1973proof_of_lower_bound_conjecture}, \cite{bayer1984Counting_faces_and_chains}, \cite{stanley80simplicial}, \cite{stanley1985simplicial} or \cite{bayer1985Generalized_Dehn_Sommerville}. The set of $f$-vectors of four dimensional polytopes has been studied in \cite{barnette1973projections_four_polytopes}, \cite{barnette1974projection}, \cite{altshuler1985complete_enumeration}, \cite{bayer1987extended_f_vectors}, \cite{eppstein2003fat_and_fatter}, \cite{ziegler2002face_numbers} and \cite{brinkmann2018small_f_vectors}. Some insights about $f$-vectors of centrally symmetric polytopes are given in \cite{Barany1982Borsuks_theorem}, \cite{campo-neuen2006on_toric_h_vectors}, \cite{stanley1987centrally_symmetric_simplicial} and \cite{freij2013face_numbers_of_csp}.
It is still an open question, even in three dimensions, what the $f$-vectors of \emph{symmetric polytopes} are. This question will be partially answered in this paper.

In particular, given a finite $3\times 3$ matrix group $G$, we ask to determine the set
$\F(G)$ of vectors $(\f_0,\f_2)\in \N\times\N$ such that there is a polytope $P$ symmetric under $G$ (i.e. $A\cdot P = P$ for all $A\in G$) with $\f_0$ vertices and $\f_2$ facets (we omit the number of edges by the Euler-equation). In this paper we give an answer for all groups that don't contain a reflection summarized in the following theorem. For a detailed explanation of the mentioned groups see Theorem \ref{thm:finite_orthogonal_groups}.

\begin{theorem}[main theorem]\label{thm:main}
Let $\F$ be the set of $\f$-vectors of three dimensional polytopes (ommiting the number of edges) and for $M\subset\N\times\N$ we use $M^\diamond \coloneqq \{(y,x) \ : \ (x,y)\in M\}$ as well as $\equiv$ to denote component wise congruence.
The $f$-vectors of symmetric polytopes under rotation groups can be classified as follows:
\begin{align*}
\F(\C_n) &= \{\f\in\F \ : \ \f\equiv (1,1) \mod n\} ^\diamond\\
&\cup \{\f = (\f_0,\f_2) \in \F \ : \ \f\equiv (0,2), 2\f_0 - \f_2 \geq 2n-2 \mod n\}^\diamond \textnormal{ for } n > 2,\\
\F(\C_2) &= \F,\\
\F(\Dih_d) &= \{\f\in\F \ : \ \f\equiv (0,2),(2,d) \mod 2d\}^\diamond \\
&\cup \{\f = (\f_0,\f_2)\in\F \ : \f\equiv (0,d+2), (d,d+2) \mod 2d , 2\f_0 - \f_2 \geq 3d-2\}^\diamond\\
&\textnormal{ for } d>2\\
\F(\Dih_2) &= \{\f\in\F \ : \ \f\equiv (0,0),(0,2),(2,2) \mod 4\}^\diamond \setminus\{(6,6)\},\\
 \F(\T) &= \{\f\in\F \ : \ \f\equiv (0,2),(0,8),(4,4),(4,10),(6,8) \mod 12\}^\diamond,\\
\F(\Oc) &= \{\f\in\F \ : \ \f\equiv (0,2),(0,14),(6,8),(6,20),(8,18),(12,14) \mod 24\}^\diamond,\\
\F(\I) &= \{\f\in\F \ : \ \f\equiv (0,2),(0,32),(12,20), (12,50), \\
 & \hspace{19em} (20,42),(30,32) \mod 60\}^\diamond .
\end{align*}
For rotary reflection groups, the $f$-vectors can be classified as:
\begin{align*}
\F(\G_d) &= \{\f\in\F \ : \ \f\equiv (0,2) \mod 2d\}^\diamond \textnormal{ for } d > 2,\hfill \\
\F(\G_2) &= \{\f\in\F \ : \ \f\equiv (0,0),(0,2) \mod 4\}^\diamond.\\
\F(\G_1) &= \{\f\in\F \ : \ \f\equiv (0,0) \mod 2\}^\diamond\setminus \{(4,4),(6,6)\}
\end{align*}
%\note{I think there's something wrong with the notation in the $F(D_d)$ case (and maybe for $F(C_n)$ as well. If we first assume that $2\f_0 - \f_2 \geq 3d-2$ holds but not the dual statement and then use $^\diamond$, then it is included again. But I think it is not exluded symmetrically, since the example $Pri_3$ with $f$-vector $(6,5)$ does not suffice $2f_2-f_0\geq 3d-2$. Hence, I think we want 
%\begin{align*}
%\F(\Dih_d) &= &&  \{\f\in\F \ : \ \f\equiv (0,2),(2,d) \mod n\}^\diamond \\
%& & \cup &  (  \{\f = (\f_0,\f_2)\in\F \ : \f\equiv (0,d+2), (d,d+2) \mod n\}^\diamond  \\
%&&& \cap   \{\f = (\f_0,\f_2)\in\F \ :  2\f_0 - \f_2 \geq 3d-2 \} ) 
%\end{align*}
%)}

%\note{in $\F(\T)$ case (6,8) is missing, maybe formulate $\F(D_2)$ modulo $2$ and other sets with smaller modulant too}

In this paper we generalize the elementary approach of Steinitz (see \cite{steinitz1906polyederrelationen}). We start by introducing some fundamental terms and concepts relevant to this work in Section \ref{sec:preliminaries}.
The coarser structure of $F(G)$ is due to the composition of orbits that the group $\G$ admits. This can be described in general and will be shown Section \ref{sec:conditions}, especially in Lemma~\ref{lem:f_vectors_mod_n}. The extra restrictions arise from certain structures of facets and vertices, e.g. a facet on a 6-fold rotation axis must have at least 6 vertices, which forces the polytope to be 'further away' from being simplicial.

The main difficulty in characterizing $F(G)$ is the construction of $G$-symmetric polytopes with a given $f$-vector. 
In  Section \ref{sec:base} we introduce so called \emph{base polytopes}, symmetric polytopes that can be used to generate an infinite class of $f$-vectors.%, analogous to the pyramids over $n$-gons in  Steinitz's work. 
Since the operations on base polytopes produce $f$-vectors in the same congruence class, we divide the set of possible $f$-vectors in $F(G)$ into several coarser integer cones. 
To certify the existence of all $f$-vectors in one of these coarser integer cones
 we introduce four types of certificates in Section \ref{sec:certificates}. In Corollary \ref{cor:certificates_work} we describe for which  $f$-vectors certificates are needed to obtain all $f$-vectors conjectured to be in $F(G)$. To give these certificates we need to find symmetric polytopes with 'small' $f$-vector. To this end, in Section \ref{sec:constructions}, we  introduce  useful constructions on polytopes that change the $f$-vectors, but preserve the symmetry. As starting points, we then give a list of some well known polytopes taken from the Platonic, the Archimedean and their duals, the Catalan solids. 
In Section \ref{sec:characterization} we are finally able to connect the theory with explicit constructions of polytopes to prove Theorem \ref{thm:main}.
 Lastly, we conclude the paper with some open questions and conjectures in Section \ref{sec:open}.%, putting an emphasis on the adaptability of our work to the remaining symmetry groups, the ones that contain reflections.
\end{theorem}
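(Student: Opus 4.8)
\section*{Proof proposal for Theorem~\ref{thm:main}}

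The plan is to establish, for each group $G$ in the list, the two inclusions "$\subseteq$" (necessity) and "$\supseteq$" (sufficiency) separately, and to dispose of the operation $M\mapsto M^\diamond$ once and for all at the start: since every group under consideration is orthogonal, $A\cdot P = P$ for all $A\in G$ if and only if $A\cdot P^\ast = P^\ast$ for all $A\in G$, where $P^\ast$ is the polar dual; as $P^\ast$ has $f$-vector $(\f_2,\f_0)$, it suffices to prove each of the stated descriptions up to the diamond symmetry. Everything is also confined to Steinitz's set $\F$, i.e.\ to the integer points of the cone cut out by the Euler relation and the inequalities $\f_0\le 2\f_2-4$, $\f_2\le 2\f_0-4$ (see \cite{steinitz1906polyederrelationen}).

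For necessity, the coarse congruence conditions modulo $|G|$ (i.e.\ $n$ for $\C_n$, $2d$ for $\Dih_d$ and $\G_d$, and $12,24,60$ for $\T,\Oc,\I$) come from decomposing the vertex set and the facet set of a $G$-symmetric polytope $P$ into $G$-orbits: each orbit has size $|G|/|H|$ where $H$ is the stabiliser of a vertex (resp.\ facet), and $H$ is one of the finitely many rotation subgroups of $G$ fixing an axis; enumerating which collections of small orbits can occur simultaneously yields exactly the residues listed, and this is precisely the content of Lemma~\ref{lem:f_vectors_mod_n}. The extra inequalities — $2\f_0-\f_2\ge 2n-2$ for $\C_n$, $2\f_0-\f_2\ge 3d-2$ for $\Dih_d$ — are forced because in the relevant residue classes $P$ must carry a facet meeting the principal axis, hence a facet with at least $n$ (resp.\ $d$ or $2d$) vertices; feeding this lower bound on a single facet size into the Euler relation together with Steinitz's inequalities gives the stated bound. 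The finitely many small exclusions, $(6,6)$ for $\Dih_2$ and $(4,4),(6,6)$ for $\G_1$, are eliminated by a direct inspection of the few combinatorial types of $3$-polytopes with that few faces.

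For sufficiency I follow the Steinitz-style scheme advertised in the introduction. For each $G$ the target set is a union of integer points of finitely many translated subcones of (the diamond-closure of) the Steinitz cone, each lying in a single congruence class. Using the base polytopes of Section~\ref{sec:base} together with the symmetry-preserving constructions of Section~\ref{sec:constructions} — stacking a pyramid over an orbit of facets, truncating an orbit of vertices, and similar local moves — one shows that from any one realised $f$-vector one reaches an entire subcone in the same congruence class; the four certificate types of Section~\ref{sec:certificates} package this inductive step, and Corollary~\ref{cor:certificates_work} identifies exactly which "seed" $f$-vectors near the apexes of the subcones still need to be realised directly. These seeds are then produced by starting from the appropriate classical polytopes — prisms and antiprisms and their duals for $\C_n$, $\Dih_d$, $\G_d$; the tetrahedron and truncated tetrahedron for $\T$; the cube/octahedron family (and truncations) for $\Oc$; the (pseudo-)icosahedral Platonic, Archimedean and Catalan solids for $\I$ — and applying a bounded number of the constructions from Section~\ref{sec:constructions}. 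Assembling the certificates, checking that their outputs exhaust the conjectured set, and matching this against the necessity part completes the proof in Section~\ref{sec:characterization}.

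The main obstacle is this finite but delicate base-case bookkeeping: one must exhibit enough small $G$-symmetric polytopes, and enough independent moves on them, to cover every residue permitted by the congruences right down to the boundary of the Steinitz cone, while at the same time confirming that the genuinely excluded vectors (the strict offsets for $\C_n$ and $\Dih_d$, and the sporadic $(4,4)$ and $(6,6)$) are truly unrealisable. For the large groups $\T$, $\Oc$, $\I$ there are several residue classes and the subcones have sizeable offsets, so the certificates must be selected and verified individually — I expect nearly all of the work to be concentrated here — whereas the families $\C_n$, $\Dih_d$, $\G_d$ should admit a uniform treatment in $n$ and $d$ once the right parametrised certificates are found.
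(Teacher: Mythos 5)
Your outline follows the paper's own strategy step for step (duality for the diamond closure, Lemma~\ref{lem:f_vectors_mod_n} for the congruences, flag/Euler counting for the extra inequalities, certificates plus Corollary~\ref{cor:certificates_work} for sufficiency), but two points prevent it from being a proof. First, the derivation you sketch for the inequalities is too weak to give the stated constants. A single facet with at least $n$ vertices fed into the flag count $2\f_1=\f_{12}\ge 3(\f_2-1)+n$ only yields $2\f_0-\f_2\ge n-1$, not $2n-2$; the paper's argument for $\C_n$ uses that in the residue class $(0,2)$ there is a large facet on \emph{each} of the two rays of the axis, giving $2\f_1\ge 3(\f_2-2)+2n$. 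For $\Dih_d$ the bound $3d-2$ needs still more: besides the two $d$-gonal (or larger) facets on the principal axis one must also count the $d$ facets on a flip orbit, each forced by the induced $\C_2$-symmetry to have at least $4$ vertices, giving $2\f_1\ge 3(\f_2-d-2)+2d+4d$. Without these extra contributions the necessity direction does not close. (Similarly, for the exclusions $(6,6)$ and $(4,4),(6,6)$ the paper argues directly from the geometry of flip axes and of the point reflection; an "inspection of combinatorial types" would additionally need a realization statement to rule out \emph{geometric} symmetry, which you do not supply.)

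Second, and more seriously, the sufficiency direction is only a plan. The entire difficulty of the theorem is the finite but large verification that for every residue class and every one of the three seed vectors $v_1,v_2,v_3$ of Lemma~\ref{lem:finite_cone_distribution} there exists a certificate, i.e.\ concrete left-type, right-type, or base polytopes with exactly the required $\f$-vectors and with a simplicial facet or simple vertex of trivial stabilizer. These seeds are not all reachable from Platonic/Archimedean/Catalan solids by the listed moves: the paper has to introduce ad hoc polytopes by explicit coordinates (e.g.\ $Dih_2(10,14)$, $Oct(30,44)$, $Oct(60,38)$, $PRefl(8,10)$, the belts $B_{2k,l}$ and $EB_{2k,l}$), and for the excluded vectors $(6,6)$ (for $\Dih_2$) and $(4,4),(6,6)$ (for $\G_1$) it needs triangle certificates whose apex polytope is deliberately absent, which is exactly what makes the cone minus its apex realizable. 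None of this is exhibited in your proposal, so the inclusion $\F'\subset\F(G)$ remains unproved for every group.
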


\section{Preliminaries}\label{sec:preliminaries}

We start by introducing some fundamental terms and concepts relevant to this work.
A polytope is the convex hull of finitely many points in $\R^n$. A face of a polytope $P$ is the intersection of $P$ with a hyperplane that contains no  points of the relative interior of $P$. The polytope $P$ itself and the empty set are often considered as non-proper faces of $P$ as well, but are irrelevant for the study of $f$-vectors, since there is always exactly one of each. The dimension of a face is the dimension of its affine hull.

Let $P\subset\R^3$ be a 3-dimensional polytope. For $i\in \{0, 1,2\}$, we denote by $f_i(P)$ the number of $i$-dimensional faces of $P$. We call a facet \emph{simplicial} if it is triangular and a vertex \emph{simple}  if it has degree three. A polytope is called simplicial if all of its facets are simplicial and it is called simple  if all of its vertices are simple. Furthermore, a $(i,j)$-flag of $P$ for $i\leq j$ consists of an $i$-dimensional face $F_i$ and a $j$-dimensional face $F_j$ of $P$ such that $F_i\subseteq F_j$. The number of all $(i,j)$-flags of $P$ is denoted by $f_{i,j}(P)$.%The \emph{$f$-vector}, as considered in \cite{??}, consists of all $f_i(P)$ for $i=-1,\dots,3$. 
The following result %due to Euler and Steinitz
follows directly from the Dehn-Sommerville Equations (see \cite[Section 9.2]{grunbaum2003convex}).
%can be found for example in \cite{grunbaum2003convex}.
It establishes certain dependencies among the numbers of $i$-dimensional faces of a 3-dimensional polytope:

\begin{theorem}\label{thm:Euler_Steinitz}
For any 3-dimensional polytope $P$ we have:
\begin{enumerate}[label=(\arabic*)]
\item $f_0(P) - f_1(P) + f_2(P) = 2$, \label{fml:Euler}
\item $2f_0(P) - f_2(P) \geq 4$,\label{it:simplicial}
\item $2f_2(P) - f_0(P) \geq 4$.\label{it:simple}
\end{enumerate}
We have equality in Equation \ref{it:simplicial} (Equation \ref{it:simple}) if and only if $P$ is simplicial (simple).
\end{theorem}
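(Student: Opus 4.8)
The plan is to take part \ref{fml:Euler} as the classical Euler--Poincaré relation for $3$-polytopes, for which one may simply cite \cite{grunbaum2003convex}, and to extract parts \ref{it:simplicial} and \ref{it:simple} from it by an elementary double count of flags, in the spirit of Steinitz's original argument. (This is the concrete content behind the reference to the Dehn--Sommerville equations; the two inequalities are exactly the manifestation of the fact that polygons have at least three edges and vertices at least three incident edges.)

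For part \ref{it:simplicial} I would count the $(1,2)$-flags of $P$ in two ways. On the one hand, every edge of a $3$-polytope is contained in exactly two facets, so $f_{1,2}(P) = 2 f_1(P)$. On the other hand, every facet of $P$ is a $2$-dimensional polytope, hence a polygon, and therefore has at least three edges; summing over all facets gives $f_{1,2}(P) \geq 3 f_2(P)$, with equality precisely when every facet is a triangle, i.e. when $P$ is simplicial. Combining the two relations and eliminating $f_1(P)$ by part \ref{fml:Euler} in the form $f_1(P) = f_0(P) + f_2(P) - 2$ yields $2 f_0(P) + 2 f_2(P) - 4 \geq 3 f_2(P)$, that is $2 f_0(P) - f_2(P) \geq 4$, and the equality characterization is inherited directly from the flag count.

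Part \ref{it:simple} is the polar-dual statement. If $Q$ denotes the polar dual of $P$, then $f_0(Q) = f_2(P)$, $f_1(Q) = f_1(P)$, $f_2(Q) = f_0(P)$, and $P$ is simple if and only if $Q$ is simplicial; applying part \ref{it:simplicial} to $Q$ gives $2 f_2(P) - f_0(P) \geq 4$ together with the claimed equality condition. Alternatively, one argues directly and dually to the previous paragraph, by double counting the $(0,1)$-flags: every edge has exactly two endpoints, so $f_{0,1}(P) = 2 f_1(P)$, while every vertex of a $3$-polytope lies on at least three edges, so $f_{0,1}(P) \geq 3 f_0(P)$, with equality iff $P$ is simple; eliminating $f_1(P)$ via part \ref{fml:Euler} finishes it.

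There is no genuinely hard step here; the only points requiring the (standard) structural background on face lattices of polytopes are that in a $3$-polytope every edge lies on exactly two facets and every vertex meets at least three edges, and that a $2$-polytope has at least three edges --- all of which rest on $P$ being genuinely $3$-dimensional --- together with carefully tracking the equality cases through the chain of (in)equalities. Since the substantive work of the paper is the construction of symmetric polytopes with prescribed $f$-vectors in Sections \ref{sec:base}--\ref{sec:characterization}, I would keep this proof to a few lines.
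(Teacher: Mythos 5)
Your proof is correct: the double count of $(1,2)$-flags (every edge on exactly two facets, every facet with at least three edges) combined with Euler's relation, and the dual argument via $(0,1)$-flags or polarity, is the standard derivation, and the equality characterizations are tracked correctly. The paper itself gives no proof of this statement --- it simply cites \cite{grunbaum2003convex} --- but your argument is exactly the one behind that citation, and it is the same flag-counting technique the paper itself deploys later (e.g.\ in the proofs of the $\C_n$ and $\Dih_d$ cases), so there is nothing to object to.
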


It thus suffices to know two of the three numbers $f_0(P)$, $f_1(P)$ and $f_2(P)$;  the missing one can be computed using Theorem \ref{thm:Euler_Steinitz}, Equation \ref{fml:Euler}. That leads us to the following definition of an $f$-vector that differs slightly from the definition for higher dimensional polytopes, where all numbers $f_i(P)$ occur in the $f$-vector.
\begin{definition}
Let $P$ be a 3-dimensional polytope. We define the \emph{$f$-vector}  of $P$ to be $f(P) = (f_0(P), f_2(P))$. 
\end{definition} 

Recall, that we denote by $F$ the set of all possible $f$-vectors of 3-dimensional polytopes, i.e.
\begin{equation*}
F=\{f\in \Z^2 \ : \ \exists  \textnormal{ polytope } P \textnormal{ with } \dim(P)=3 \textnormal{ and } f(P)=f \}.
\end{equation*} 

It turns out that the conditions of Theorem \ref{thm:Euler_Steinitz} are sufficient for a characterization of $F$. 

%\hspace*{-\parindent}
%\begin{minipage}[b]{0.7\textwidth}
\begin{theorem}[{\cite[Section 10.3]{grunbaum2003convex}}] 
We have 
\begin{align*}
\F = \{(f_0,f_2)\in \Z^2 \ : \ 2f_0 - f_2 \geq 4  \textnormal{ and } 2f_2 - f_0 \geq 4 \},
\end{align*}
i.e. the set of all possible $f$-vectors of 3-dimensional polytopes is a translated integer cone as shown in Figure \ref{fig:f_all}.
\end{theorem}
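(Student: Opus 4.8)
\emph{Proof plan.} The necessity of the two inequalities is immediate from Theorem~\ref{thm:Euler_Steinitz}: for every $3$-polytope $P$, parts \ref{it:simplicial} and \ref{it:simple} give $2f_0(P)-f_2(P)\ge 4$ and $2f_2(P)-f_0(P)\ge 4$. The substance is the converse, which I would establish constructively, in the elementary spirit that the later sections generalise: every integer point $(f_0,f_2)$ with $2f_0-f_2\ge 4$ and $2f_2-f_0\ge 4$ is the $f$-vector of some $3$-polytope.

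The plan rests on two elementary operations on a $3$-polytope. \emph{Stacking}: gluing a shallow pyramid onto a triangular facet replaces that facet by three triangles and introduces one new degree-$3$ vertex, so the $f$-vector changes by $(+1,+2)$. \emph{Vertex truncation}: slicing off a degree-$3$ vertex by a nearby plane replaces it by a new triangular facet and modifies the three surrounding facets, so the $f$-vector changes by $(+2,+1)$. Each operation manifestly yields a convex $3$-polytope, and -- the point I would pin down first -- neither can ever become inapplicable: a stacking produces three new triangular facets and a new simple vertex, and a truncation produces one new triangular facet and three new degree-$3$ vertices. Hence, starting from any base polytope $B$ and applying the operations in any order, I can realise every $f$-vector $f(B)+a\,(1,2)+b\,(2,1)$ with $a,b\in\Zgeq$.

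It remains to choose suitable base polytopes. Since $(1,2)$ and $(2,1)$ generate only the index-$3$ sublattice of points with $f_0+f_2$ fixed modulo $3$, one base is not enough; I would use the $k$-gonal pyramids for $k=3,4,5$, whose $f$-vectors $(4,4)$, $(5,5)$, $(6,6)$ represent the three residue classes of $f_0+f_2$ mod $3$. Given a target $(f_0,f_2)\in\F$, pick the base $(c,c)$ with $2c\equiv f_0+f_2\pmod 3$; then $(f_0-c,f_2-c)=a\,(1,2)+b\,(2,1)$ is uniquely solvable over $\Z$ with $a=(2f_2-f_0-c)/3$ and $b=(2f_0-f_2-c)/3$, and a short congruence check shows that the hypotheses $2f_2-f_0\ge 4$ and $2f_0-f_2\ge 4$ force $a\ge 0$ and $b\ge 0$ (the bounding lines $2f_0-f_2=4$ and $2f_2-f_0=4$ being exactly the cases $b=0$, simplicial polytopes, and $a=0$, simple polytopes). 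Applying stacking $a$ times and truncation $b$ times to the chosen pyramid then produces a $3$-polytope with the prescribed $f$-vector.

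The one genuinely delicate step, I expect, is this final bookkeeping: verifying at once that the two operations remain available throughout (so that arbitrary multiplicities $a,b$ are legitimate) and that the cone inequalities are precisely equivalent to non-negativity of $a$ and $b$, boundary cases included -- a single base polytope would reach only a third of the cone, so several bases are essential, not cosmetic. The geometric inputs -- that a shallow pyramid over a facet and a small truncation of a vertex each give a convex polytope -- are routine. One could instead simply invoke the classical theorem of Steinitz, \cite[Section~10.3]{grunbaum2003convex}; but the constructive argument is the one the symmetric analysis of the later sections builds upon.
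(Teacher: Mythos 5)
Your proposal is correct and coincides with the paper's treatment: the paper does not reprove this statement but cites Gr\"unbaum/Steinitz, and the introduction sketches exactly your argument --- pyramids over $3$-, $4$- and $5$-gons as generators, with stacking on a triangular facet contributing $(1,2)$ and truncating a simple vertex contributing $(2,1)$, each operation preserving the availability of both. Your bookkeeping (integrality and non-negativity of $a=(2f_2-f_0-c)/3$ and $b=(2f_0-f_2-c)/3$ after choosing the residue class of $c$) checks out.
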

\begin{figure}
\begin{center}
\scalebox{0.3}{\includegraphics{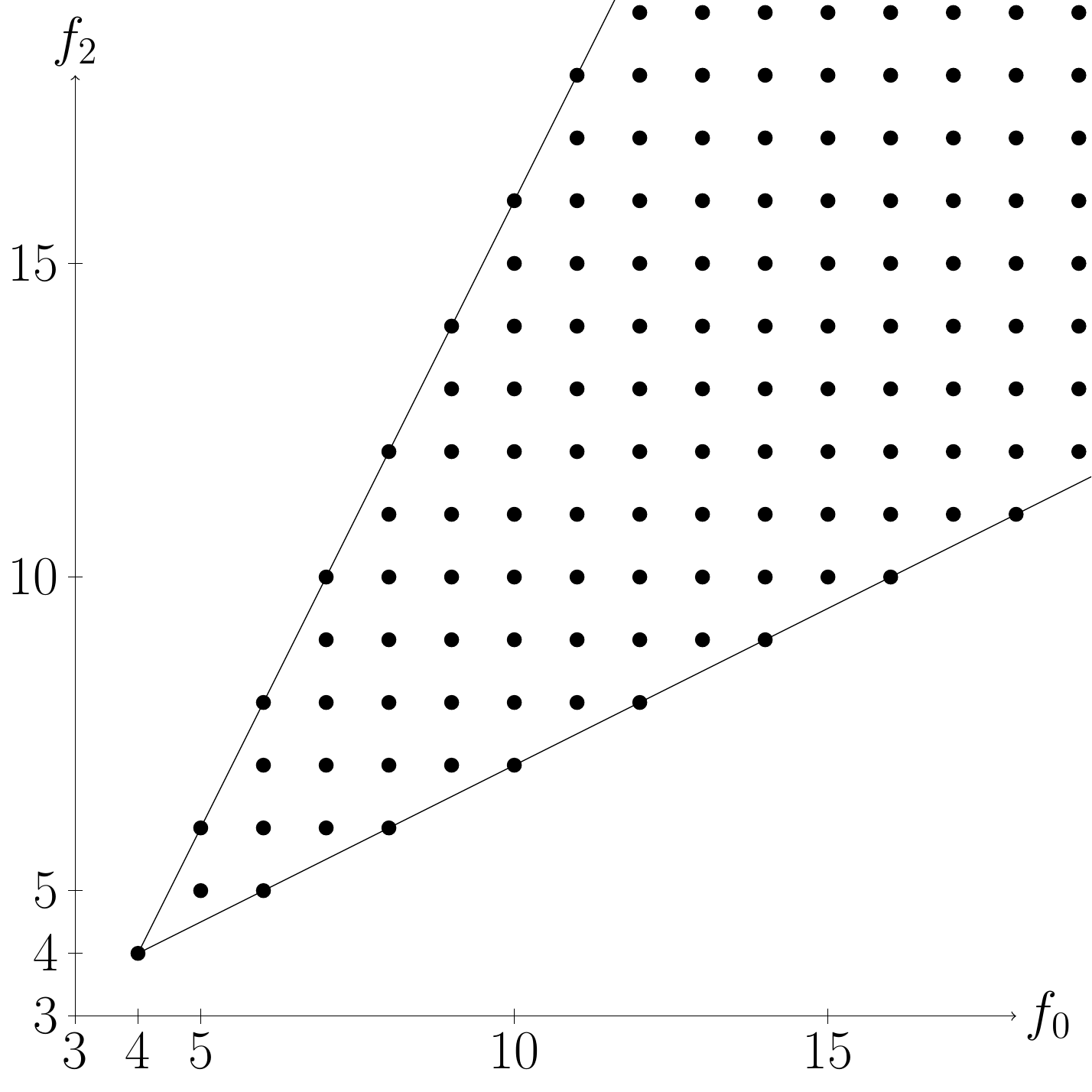}}
\caption{The set $F$ of all $f$-vectors of 3-dimensional polytopes.}\label{fig:f_all}
\end{center}
\end{figure}
%\end{minipage}
%\begin{minipage}{0.3\textwidth}
%\begin{figure}
%\scalebox{0.11}{\includegraphics{Bilder/f-all.pdf}}
%\caption{The set $F$.}
%\end{figure}
%\end{minipage}

%\note{$\F = \{(f_0,f_2)\in \Z^2 \ : \ 2f_0 - f_2 \geq 4\}^\diamond$ ist falsch, da wäre ja $(100,0)$ auch drin und $\ ^\diamond$ fügt ja nur Elemente hinzu}

Let $G$ be a matrix group. We say that a polytope $P$ is \emph{$G$-symmetric} if $G$ acts on $P$, i.e. $G\cdot P = P$.  The set of all possible $f$-vectors of $G$-symmetric polytopes is denoted by
$$\F(G) = \{f\in\Z^2 \ : \ \exists \  G\text{-symmetric polytope } P \text{ s.t. } \ f(P) = f\}.$$

%Note that since $P\subset \R^3$ is full dimensional, $P$ being $G$-symmetric implies that $G$ is finite and that $\det(A)\in \{ -1,1\}$ for all $A \in G$.
%Our goal is to characterize $\F(G)$ for all \emph{rotation groups}, i.e. finite orthogonal subgroups of $SO_3(\R)$. 
By the following well known fact, it is sufficient to study orthogonal groups.
\begin{lemma}
If $G$ is a finite subgroup of $GL_3(\R)$, then there is an inner product $\linspan{\cdot,\cdot}_G$ such that $G$ is an orthogonal group with respect to $\linspan{\cdot,\cdot}_G$.
\end{lemma}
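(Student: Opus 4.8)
The plan is to use the standard averaging trick (the Weyl ``unitarian trick''), which is exactly why the excerpt can call this a well-known fact. First I would fix \emph{any} inner product $b(\cdot,\cdot)$ on $\R^3$, for instance the standard dot product, and define a new symmetric bilinear form by averaging it over the group:
\[
\linspan{u,v}_G \coloneqq \frac{1}{|G|}\sum_{A\in G} b(Au,Av).
\]
Since $G$ is finite the sum is well defined, and since every summand $(u,v)\mapsto b(Au,Av)$ is bilinear and symmetric, so is $\linspan{\cdot,\cdot}_G$.

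Next I would check positive definiteness. Each summand satisfies $b(Au,Au)\geq 0$, so $\linspan{u,u}_G\geq 0$; and the summand corresponding to $A=I$ equals $b(u,u)$, which is strictly positive for $u\neq 0$. Hence $\linspan{u,u}_G>0$ whenever $u\neq 0$, so $\linspan{\cdot,\cdot}_G$ is a genuine inner product.

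Finally I would verify $G$-invariance. Fix $B\in G$; then
\[
\linspan{Bu,Bv}_G = \frac{1}{|G|}\sum_{A\in G} b(ABu,ABv) = \frac{1}{|G|}\sum_{C\in G} b(Cu,Cv) = \linspan{u,v}_G,
\]
where the middle step substitutes $C=AB$ and uses that right multiplication by $B$ permutes the elements of $G$. Thus every $B\in G$ preserves $\linspan{\cdot,\cdot}_G$, i.e. is orthogonal with respect to it, so $G$ is an orthogonal group with respect to $\linspan{\cdot,\cdot}_G$.

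There is no genuine obstacle here; the only points that need a moment's care are the reindexing $A\mapsto AB$ in the invariance computation and the use of the identity element to upgrade positive semidefiniteness to positive definiteness. One could also record the equivalent reformulation that $G$ is conjugate in $GL_3(\R)$ to a subgroup of $\Oc(3)$ (conjugate by any matrix carrying the standard inner product to $\linspan{\cdot,\cdot}_G$); but for studying $f$-vectors the intrinsic statement is all that is needed, since an invertible linear image of a polytope has the same face lattice, hence the same $f$-vector.
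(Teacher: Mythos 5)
Your proof is correct and complete: the averaging (unitarian trick) argument, the use of the identity element to get strict positive definiteness, and the reindexing $A\mapsto AB$ are exactly the points that matter. The paper itself states this lemma as a well-known fact and gives no proof at all, so there is nothing to compare against; your argument is the standard one the authors are implicitly invoking, and your closing remark that an invertible linear image preserves the face lattice is precisely why the paper may then restrict attention to orthogonal groups when studying $\F(G)$.
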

%\begin{proof}
%Define  the inner product $\langle \cdot,\cdot \rangle_G$ as
%\begin{equation*} 
%\langle x,y \rangle_G:=x^t M y
%\end{equation*} 
%for all $x,y \in \R^3$,
%given by the Gram matrix 
%\begin{equation*} 
%M:= \frac{1}{|G|}\sum_{A\in G} A^tA .
%\end{equation*}
%Then for any $B\in G$, we have  
%\begin{equation*}
%\langle B x, B y \rangle_G % =(Bx)^t M By =   \frac{1}{|G|}\sum_{A\in G} x^t B^t A^t A B y 
	%=  \frac{1}{|G|}\sum_{A\in G} x^t (AB)^t AB y =  \frac{1}{|G|}\sum_{C\in G} x^t C^t C y = \langle x,y \rangle_G
%\end{equation*}
%Hence, $\linspan{\cdot,\cdot}_G$ is $G$-invariant and $G$ is orthogonal with respect to $\linspan{\cdot,\cdot}_G$.
%\end{proof}

Hence, for any finite matrix group $G$ there is an orthogonal group $G'$ with $\F(G) = \F(G')$ and we can reduce our study to finite orthogonal groups.  Luckily, the finite orthogonal groups are well known. In fact, there are only finitely many families. A full list can be found in \cite{benson1985finite_reflection_groups}. Here, we will state the characterization of all finite rotation and rotary reflection groups, since these are our main subject of interest. The remaining finite orthogonal groups are the ones that contain reflections and are discussed in Section \ref{sec:open}.

\begin{theorem}[{\cite[Theorem 2.5.2]{benson1985finite_reflection_groups}}]\label{thm:finite_orthogonal_groups}
If $G$ is a finite orthogonal subgroup of $GL_3(\R)$ consisting only of rotations or rotary reflections, then it is isomorphic to one of the following:
\begin{enumerate}
\item the axis-rotation group $C_n$ 
\item the dihedral rotation group $\Dih_d$
\item the rotation group of the regular tetrahedron $\T$
\item the rotation group of the regular octahedron $\O$
\item the rotation group of the regular icosahedron $\I$
\item the rotary reflection group $\G_d$ of order $2d$ (generated by a product of a rotation and a reflection) \label{it:rotary}
\end{enumerate}
\end{theorem}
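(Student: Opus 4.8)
This is the classical enumeration of the finite subgroups of $\mathrm{O}(3)$ that contain no plane reflection, and in the paper it is cleanest simply to cite \cite[Theorem~2.5.2]{benson1985finite_reflection_groups}. For completeness I sketch the standard argument. By the preceding lemma we may assume $G\le\mathrm{O}(3)$. Put $G^+=G\cap\mathrm{SO}(3)$; since $\det$ sends $G$ onto a subgroup of $\{\pm1\}$, the index $[G:G^+]$ is $1$ or $2$. So the problem splits into (a) classifying the finite subgroups of $\mathrm{SO}(3)$ up to conjugacy, and (b) analysing the index-$2$ extensions under the hypothesis that all improper elements are genuine rotary reflections, i.e.\ no plane reflection occurs.

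For (a) I would use the familiar pole-counting argument. A finite $H\le\mathrm{SO}(3)$ of order $N>1$ acts on the finite set of \emph{poles} --- the points of the unit sphere fixed by some nontrivial element of $H$. Double-counting the pairs $(g,p)$ with $g\in H\setminus\{\mathrm{id}\}$ and $gp=p$ (each such $g$ has exactly two poles; a pole-orbit with stabiliser of order $n$ has size $N/n$ and contributes $n-1$ per pole) yields
\[
\sum_{i=1}^{k}\Bigl(1-\frac{1}{n_i}\Bigr)=2-\frac{2}{N},
\]
where $n_1,\dots,n_k\ge 2$ are the stabiliser orders of the pole-orbits. Each summand lies in $[\tfrac12,1)$ and the right side in $[1,2)$, so $k\in\{2,3\}$, and the resulting Diophantine equation forces $(n_1,\dots,n_k)$ to be $(N,N)$, $(2,2,m)$, $(2,3,3)$, $(2,3,4)$ or $(2,3,5)$, with $N$ equal to $N$, $2m$, $12$, $24$ or $60$. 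In the first case $H$ fixes one axis, so $H\cong\C_N$; in the case $(2,2,m)$ the orbit of largest stabiliser is a single antipodal pair and the remaining poles lie on $m$ perpendicular two-fold axes, forcing $H\cong\Dih_m$; in the last three cases the orbit of poles of largest stabiliser is the vertex set of a regular tetrahedron, octahedron or icosahedron, so $H$ is conjugate to $\T$, $\Oc$ or $\I$.

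For (b), assume $[G:G^+]=2$. If $-\mathrm{id}\in G$ then $G=G^+\cup(-G^+)$, and since $-g$ is a plane reflection precisely when $g$ is a half-turn, $G$ has no plane reflection iff $G^+$ has no half-turn; among the groups from (a) this happens only for $G^+=\C_n$ with $n$ odd, giving the cyclic group $\G_n\cong\C_{2n}$ generated by a rotary reflection (and $\G_1=\{\pm\mathrm{id}\}$). If $-\mathrm{id}\notin G$, then $g\mapsto(\det g)\,g$ identifies $G$ with a finite rotation group $H$ in which $G^+$ sits as an index-$2$ subgroup; the available pairs are $\C_d\trianglelefteq\C_{2d}$, $\C_m\trianglelefteq\Dih_m$, $\Dih_m\trianglelefteq\Dih_{2m}$ and $\T\trianglelefteq\Oc$ (the icosahedral and tetrahedral rotation groups having no index-$2$ subgroup), and in all but the first the resulting $G$ acquires a plane reflection; the survivor is $\G_d$ with $d$ even, generated by a rotary reflection whose square generates $\C_d$, with $\G_2$ the smallest instance. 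Collecting the groups from (a) and (b) gives exactly the list (1)--(6).

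The only steps that are not purely formal are, in (a), identifying the pole-orbit of largest stabiliser with the vertex set of the relevant Platonic solid for $N=12,24,60$ (which both pins down $\T,\Oc,\I$ and excludes any further group) and, in (b), checking for each index-$2$ extension which improper elements are true rotary reflections rather than plane reflections --- exactly the distinction separating the groups listed here from the reflection groups deferred to Section~\ref{sec:open}. All of this being classical, in the paper I would just invoke \cite[Theorem~2.5.2]{benson1985finite_reflection_groups}.
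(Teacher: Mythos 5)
The paper itself offers no proof of this statement: it is imported verbatim from Grove and Benson, and the only ``argument'' in the text is the citation together with the Remark translating $\G_d$ into their notation ($(C_3^d)^\ast$ for odd $d$, $C_3^{2d}]C_3^{d}$ for even $d$). Your sketch therefore supplies strictly more than the paper does, and it is the correct classical argument: the pole-counting identity $\sum_i(1-1/n_i)=2-2/N$ with its five solution patterns for part (a), then the split of the index-two extensions according to whether $-\mathrm{id}\in G$ for part (b). Your two sources for $\G_d$ --- namely $\C_d\cup(-\C_d)$ when $d$ is odd, and the twisted embedding of $\C_{2d}$ obtained from $g\mapsto(\det g)g$ when $d$ is even --- match the paper's Remark exactly. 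One sentence is imprecise: in case (b) with $-\mathrm{id}\notin G$ you assert that ``in all but the first'' pair the group acquires a plane reflection, but the pair $\C_d\trianglelefteq\C_{2d}$ with $d$ \emph{odd} also does (the half-turn $\Theta_{2d}^d$ then lies outside $\C_d$, so $G$ contains $-\Theta_{2d}^{d}$, a reflection in the plane orthogonal to the axis); your subsequent restriction of the survivor to even $d$ quietly repairs this, but the elimination should be stated for that subcase as well. With that small fix the sketch is complete, and agreeing with the paper that one would in any case simply cite \cite[Theorem~2.5.2]{benson1985finite_reflection_groups} is the right editorial call.
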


\begin{remark}
Details of the groups will be explained in Section \ref{sec:characterization}.
In the notation of Grove and Benson \ref{thm:finite_orthogonal_groups}.\ref{it:rotary}  corresponds to $C_3^{2d}]C_3^{d}$ for even $d$ and to $(C_3^d)^\ast$ for odd $d$ and is generated by a 'rotary reflection', i.e. the conjunction of a rotation and a reflection on a hyperplane perpendicular to the rotation axis.
\end{remark}

Figure \ref{fig:hasse_diagram} shows a Hasse-diagram of some subgroup relations of these groups.
\begin{figure}
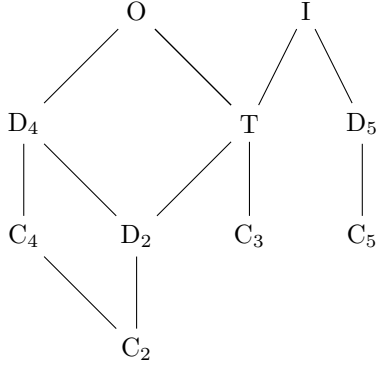

\HasseDiagramOfOrthogonalGroups
\caption{A Hasse diagram of the icosahedral and octahedral rotation group (up to conjugation by orthogonal matrices)}\label{fig:hasse_diagram}
\end{figure}
A detailed description of each group is given in Section \ref{sec:certificates}.

We proceed to discuss certain regularities of the sets $F(G)$.
For any set $M\subset\Z^2$ %, for instance $M\subset \Z^2$, 
we denote the symmetric set obtained from $M$ as 
\begin{equation*}
M^\diamond = M \cup \{(y,x) \ : \ (x,y)\in M\}.
\end{equation*} 
Considering the $f$-vector of the dual polytope $f(P^\vee) = (f_2(P), f_0(P))$ shows that $\F$ is invariant under the $\diamond$ operation.
The set $F(G)$ is also invariant under the $\diamond$ operation:
Let $P$ be a $G$-symmetric polytope and let $b$ be the barycenter of~$P$. The polytope $P^* = \{x\in\R^3 \ : \ \linspan{x,y} \leq 1 \text{ for all } y\in P-b\}$ is a $G$-symmetric polytope with
$f(P^*) = (f_2(P),f_0(P))$.
We denote by $P^\vee$ any dual polytope of $P$ that is additionally symmetric under the same symmetry group as $P$ (for example $P^*$). Furthermore, note that for $G_1 \geq G_2$ we have $F(G_1)\subseteq F(G_2)$.

Now we have everything to completely understand our main theorem (Theorem \ref{thm:main}).
% which allow us to classify the $f$-vectors of $G$-symmetric polytopes where $G$ is a finite rotation or rotary reflection group:

%Furthermore, for $G$ a finite rotary reflection group, the classification of the $f$-vectors of $G$-symmetric polytopes is:

%\begin{theorem}\label{thm:main2}
%We have
%\begin{align*}
%\F(\G_d) &= \{\f\in\F \ : \ \f\equiv (0,2) \mod n\}^\diamond \textnormal{ for } d > 2,\\
%\F(\G_2) &= \{\f\in\F \ : \ \f\equiv (0,0),(0,2) \mod 4\}^\diamond.\\
%\F(\G_1) &= \{\f\in\F \ : \ \f\equiv (0,0) \mod 2\}^\diamond\setminus \{(4,4),(6,6)\}
%\end{align*}
%\end{theorem}

If we visualize the set $F$ in a two dimensional coordinate system, it looks like a cone translated by $(4,4)$ (cf. Figure~\ref{fig:f_all}).

To show that all integer points in the cone exist as $f$-vectors of 3-polytopes, Steinitz starts with pyramids over $n$-gons, whose $f$-vectors are $(n+1,n+1)$ for $n\in \Z_{>2}$.
 He then shows that by stacking a point on a simplicial facet and by cutting a simple vertex, the $f$-vector of the polytope changes by $+(1,2)$ and $+(2,1)$, respectively. 
The resulting polytope again has simple vertices and simplicial facets, which means that the construction can be repeated. 
With this method, it can be shown that for all  points $f$ in the set $\{(f_0,f_2)\in \Z^2 \colon 2f_0 - f_2 \geq 4 \textnormal{ and } 2f_2 - f_0 \geq 4 \}$ we can construct a  3-polytope $P$ with $f(P)=f$. 
Here, the  pyramids over $n$-gons in a way act as generators. 
In fact, the pyramids over a $3$-, a $4$- and a $5$-gon would already suffice to generate all $f$-vectors by stacking on simplicial facets  and cutting simple vertices. 

Clearly, the sets $F(G)$ for finite symmetry groups $G$ are subsets of $F$. 
Theorem \ref{thm:main} states that  for most groups, $F(G)$ is coarser, since only some values modulo $n$ are allowed. 
For some groups of $\C_n$ and $\Dih_d$ there are also  inequalities restricting some residue classes. The third alteration we observe is that one or several 'small' points are left out as it happens for $\Dih_2$ and $\G_1$. These alterations are illustrated in Figure~\ref{fig:f_divers}.
\begin{figure}
\begin{center}
\scalebox{0.10}{\includegraphics{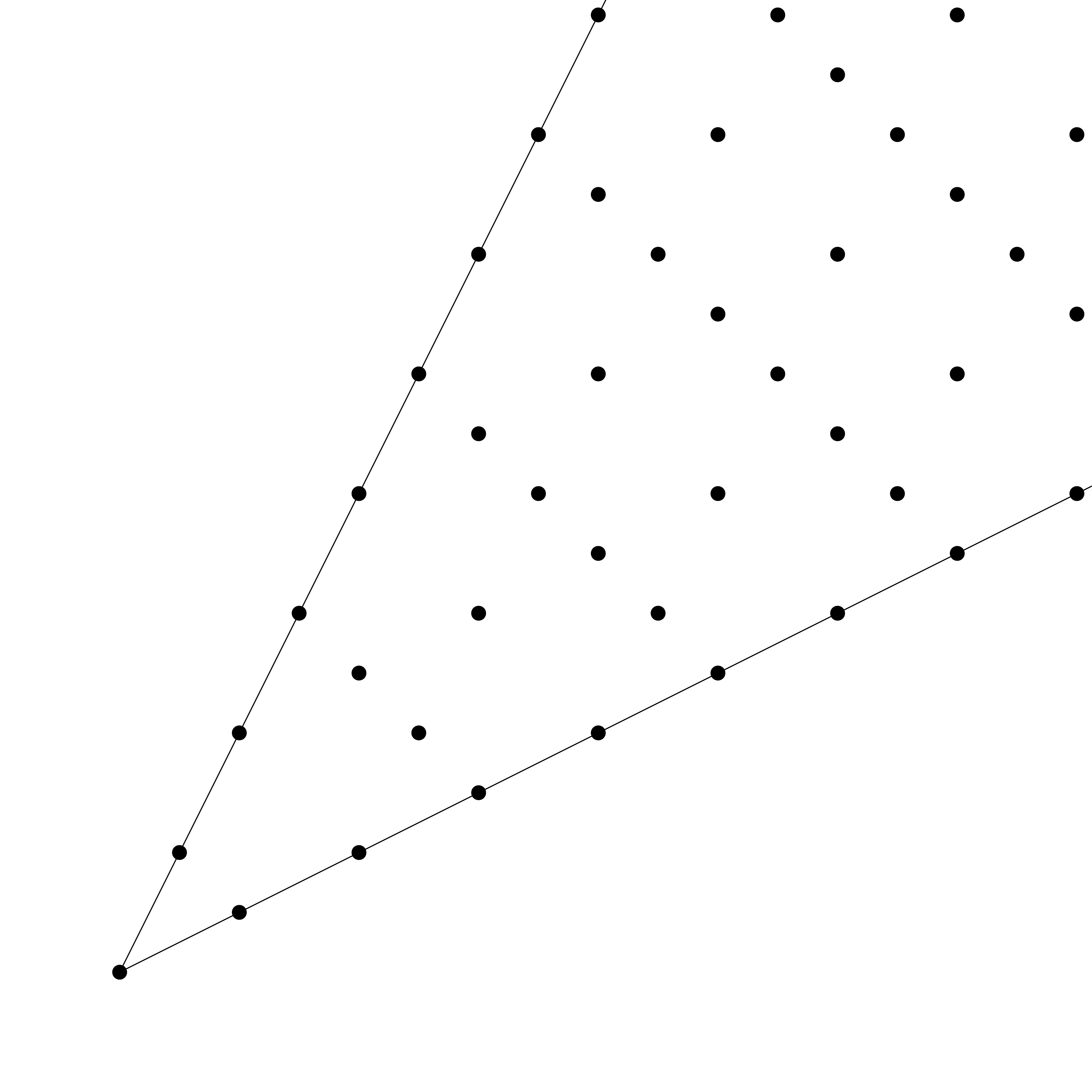}}
\scalebox{0.10}{\includegraphics{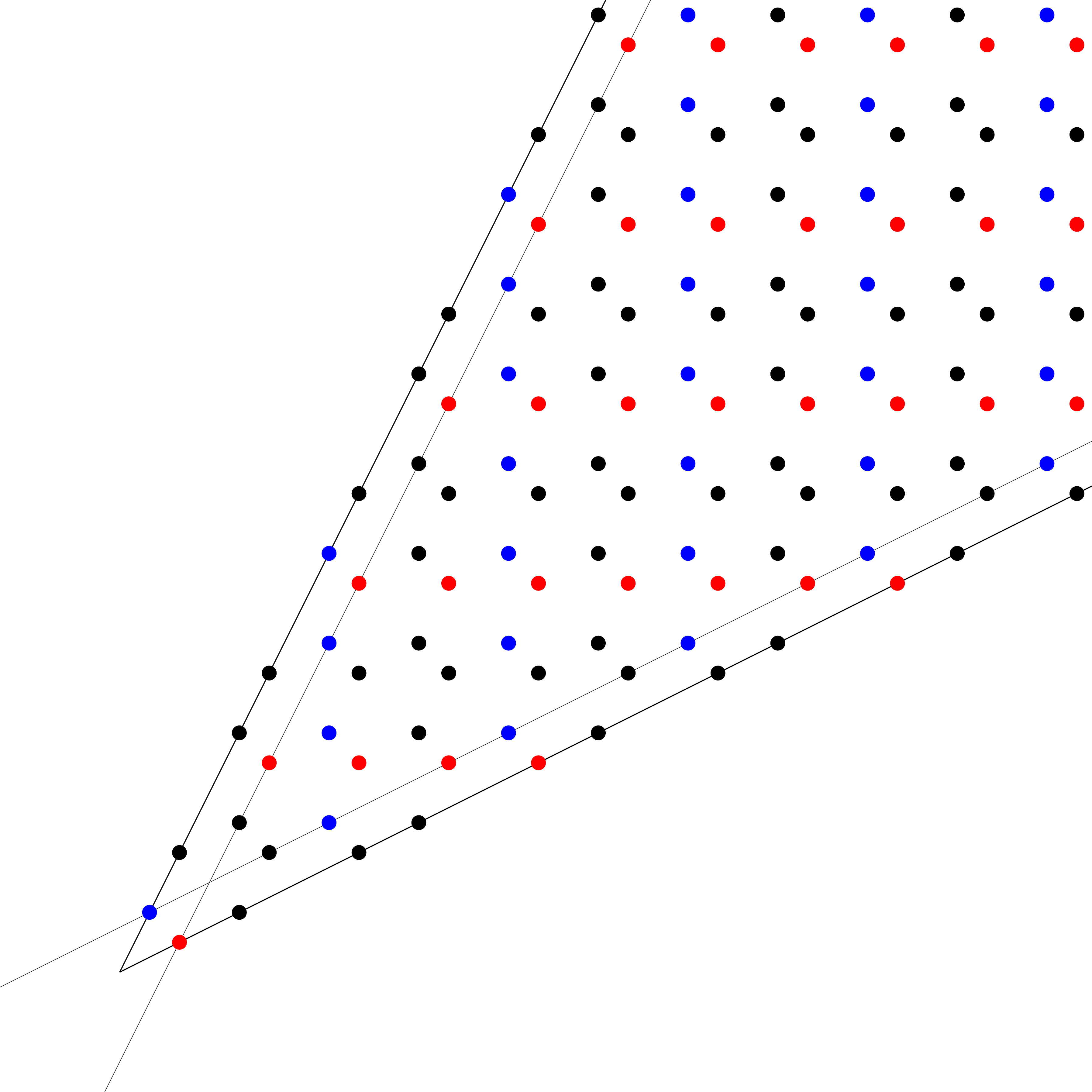}}
\scalebox{0.10}{\includegraphics{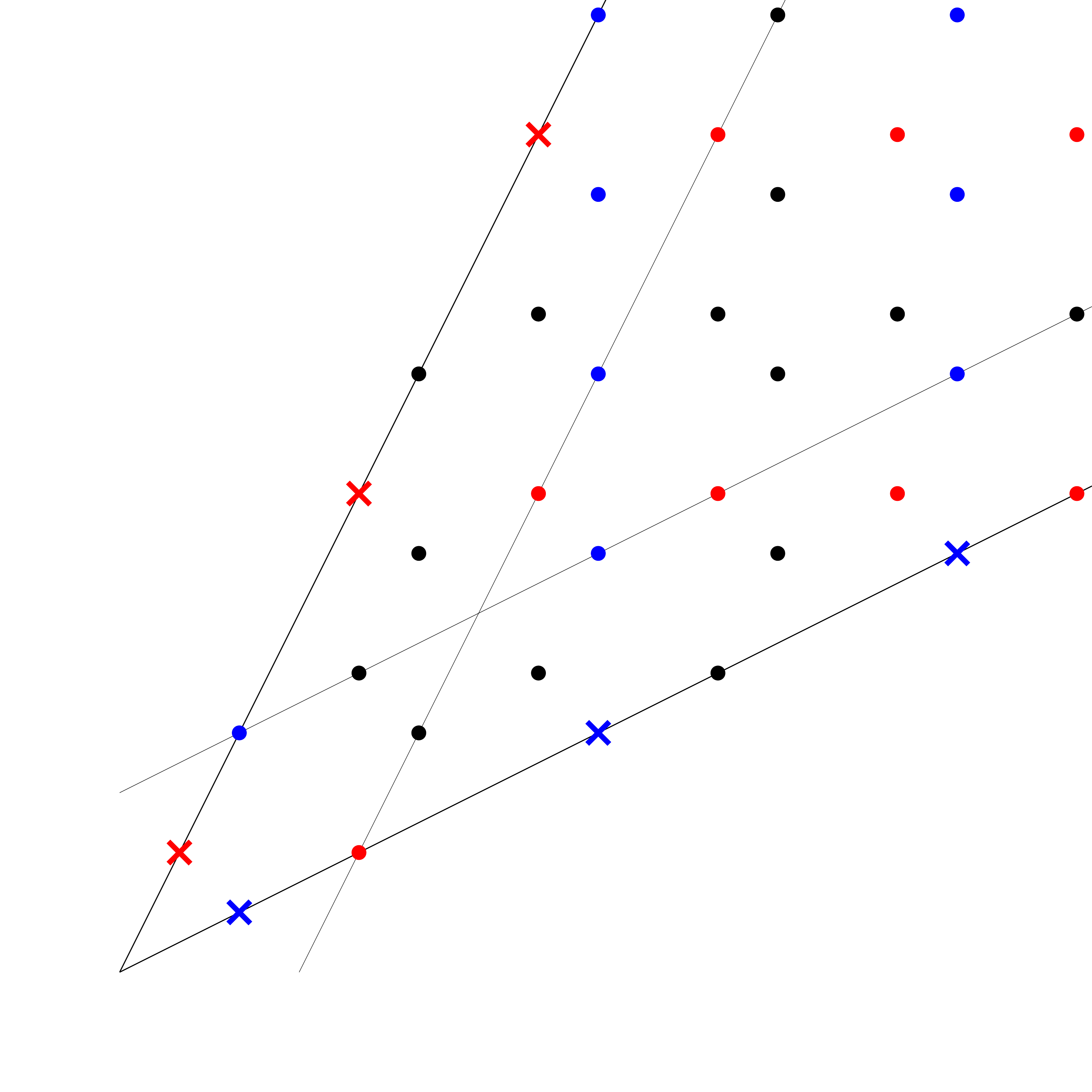}}
\caption{The sets $F(\T)$, $F(\Dih_3)$ and $F(\Dih_6)$ respectively. Different colours indicate different residue classes, crosses indicate points that are not contained in the set due to additional inequalities.}\label{fig:f_divers}
\end{center}
\end{figure}

%, i.e. symmetric polytopes $P$ that allow us to construct symmetric polytopes with $f$-vectors $f(P) + nk(1,2) + nl(2,1)$ for all $k,l \in \Z{\geq 0}$ with $n$ the order of $\G$. We call these polytopes \emph{base polytopes}  
%
%In  Sections \ref{sec:base} and \ref{sec:certificates} we will introduce the main tools that we use for proving existence of $G$-symmetric polytopes, namely base polytopes and certificates. Then we will introduce several standard constructions of symmetric polytopes in \ref{sec:constructions}. In the progress we put an emphasis on the resulting $f$-vectors of our constructions. In Section \ref{sec:characterization} we will give characterization theorems for all finite orthogonal rotation groups %as in Theorem \ref{thm:finite_orthogonal_groups} 
%which collectively prove the main Theorem \ref{thm:main}..

\section{Conditions on $\F(G)$}\label{sec:conditions}

Let $G$ be a finite orthogonal subgroup of $GL_3(\R)$.
In this section we deduce conditions on the sets $\F(G)$ in dependence of the group $G$. These conditions mostly depend on the structures of orbits under the action of $G$ on $\R^3$.

We start with some notation.
A \emph{ray} in $\R^3$ is  a set of the form 
$$\ray{x}=\{\lambda \cdot x \ : \ \lambda>0\} $$
for some  $x\in \R^3\backslash\{0\}$. We  say that $\ray{x}$ is the \emph{ray generated by $x$}.
If we consider two points in the same ray, say $v \in V\backslash \{0\}$ and $\lambda v$ with $\lambda > 0$,  we notice that the orbit polytopes $\conv\{G\cdot v\}$ and $\conv\{G\cdot \lambda v\}=\lambda\cdot \conv\{G\cdot v\}$ are the same up to dilation. It is thus useful to consider different types of orbits of rays as in the following definition:

%\begin{definition}
%A \emph{flip axis} with respect to an orthogonal symmetry group $G$ is a one dimensional linear subspace of $\R^3$  whose stabilizer consits of eaxactly the identity and a rotation of order $2$.
%%analogous to the axis of a random rotation
%\end{definition}

\begin{definition}
%The orbit of a set of points $M$ under the action of a group $G$ is the set $\{g \cdot x \ : \ x\in M, \ g\in G \}$. The orbit of a single point  
%\note{brauchen auch den normalen Orbit}
A \emph{ray-orbit} is a set of rays $R = \{\ray{x} \ : x\in G\cdot v\}$ for some $v\in\R^3\setminus\{0\}$. A ray-orbit is called \emph{regular}, if $G$ acts regularly on $R$, i.e. $G$ acts transitively with trivial stabilizer on $R$. In this case $|R|=|G|$. If $G$ is not regular on $R$, then $|R|<|G|$ and  we call $R$ \emph{non-regular}.

We further call an 
%non-regular %->bei $C_2 ist das ein regulärer Orbit
orbit $R$ a \emph{flip orbit},  if the stabilizer of $R$ consists of exactly the identity and a rotation of order $2$. The respective axis is called a \emph{flip-axis}.
\end{definition}

The following lemma can be used to define a set $\F'$ such that $\F(G)\subset\F'$. 
It turns out (cf. Section \ref{sec:characterization}) that this outer approximation is not far from being exact and for certain groups these conditions are already sufficient to describe $F(G)$. Here, for a set $M\subset \Z^2$ and an integer $n$, we define 
\begin{equation*}
(M \mod n) =\{((x\mod n),(y\mod n)) \mid (x,y)\in M \}\subset (\Z / n\Z)^2.
\end{equation*}

\begin{lemma}\label{lem:f_vectors_mod_n}
%Let $G$ be a finite orthogonal group. 
Let $O$ denote the set of all non-regular ray-orbits of $G$. Furthermore, let $O_2$ be the set of flip orbits and define $\Onreg:=O\backslash O_2$. For an arbitrary $G$-symmetric polytope $P$ we have
$$(f(P) \mod n) \in \left(  \sum_{X\in \Onreg} \{(|X|,0)\}^\diamond + \sum_{X\in O_2} \{(0,0),(|X|,0)\}^\diamond\mod n\right), $$
where the sum denotes the Minkowski sum of sets. Here we set  the sum over the empty set to be $ \{(0,0)\}$.
\end{lemma}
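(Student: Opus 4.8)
The plan is to track how the vertices and facets of a $G$-symmetric polytope $P$ decompose into orbits, and to read off what $f(P)$ can be modulo $n$ from the sizes of those orbits. First I would observe that the vertex set $\ver(P)$ is a union of $G$-orbits, and each such orbit, after passing to rays, is a ray-orbit $R$ in the sense of the definition; the number of vertices in that orbit equals $|R|$. If $R$ is regular, then $|R| = |G| \equiv 0 \pmod n$ (here $n$ will be taken to be $|G|$, or whatever modulus the statement intends — in the application $n$ divides $|G|$), so regular vertex-orbits contribute nothing modulo $n$. If $R$ is non-regular, it contributes $|R|$ to $f_0(P)$. The same analysis applies verbatim to the facet set: a facet-orbit corresponds to a ray-orbit via the outer normals (using that $P$ is full-dimensional and $G$ acts orthogonally, so $G$ permutes the facet-defining halfspaces and hence their normals), and a regular facet-orbit contributes $0 \pmod n$ while a non-regular one contributes its size to $f_2(P)$.

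Next I would explain the $\diamond$ and the role of $O_2$. A non-regular ray-orbit $X \in \Onreg$ (not a flip orbit) can occur among the vertices or among the facets of $P$, but — and this is the key point I would argue — it cannot occur as a facet-orbit \emph{with the facets actually meeting the axis of symmetry in a way that forces a vertex there}. More precisely, the reason a flip orbit $X \in O_2$ gets the extra option $\{(0,0)\}$ is that a flip-axis, whose stabilizer is just $\{\id, \text{rotation of order }2\}$, can pass through the relative interior of a facet or through an edge without creating a vertex-orbit or facet-orbit of size $|X|$: the axis pierces the polytope through two antipodal-type points that need not be vertices. For every other non-regular ray-orbit, the stabilizer contains a rotation of order $\geq 3$ (or a rotary reflection), and one checks that such a ray must support either a vertex or a facet of $P$ — a rotation of order $\ge 3$ fixing a ray cannot fix a generic boundary point of a convex body unless that point is a vertex, and dually on the facet side. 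So each $X \in \Onreg$ contributes exactly one of $(|X|,0)$ (as vertices) or $(0,|X|)$ (as facets), i.e. an element of $\{(|X|,0)\}^\diamond$; each $X \in O_2$ contributes an element of $\{(0,0),(|X|,0)\}^\diamond$. Summing over all non-regular ray-orbits and reducing mod $n$ gives exactly the claimed Minkowski-sum membership; regular orbits and the empty-sum convention handle the remaining cases.

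The main obstacle I expect is the dichotomy claim in the previous paragraph: showing that a non-regular ray-orbit whose stabilizer contains an element of order $\ge 3$ must consist of rays through vertices of $P$ or through facet-normals of $P$ (equivalently through the "centers" of facets), with no third possibility, and then showing that a flip orbit genuinely has the extra option. This is where one must use convexity together with the orthogonality of the action: if a ray $\ray{x}$ is fixed setwise by a nontrivial rotation $\rho$ about it (necessarily $\rho$ fixes $x$), and $\ray{x}$ meets $\bd P$ at a point $p$, then $\rho$ fixes $p$ and hence fixes the smallest face $F$ of $P$ containing $p$; a rotation of order $\ge 3$ about a line through $p$ fixing a face $F$ with $p \in \relint F$ forces $F$ to be $\{p\}$ or to be a facet with $p$ its centroid-type fixed point, because $\rho|_{\aff F}$ is a nontrivial finite rotation and $\dim F \in \{0,2\}$ is the only way $\rho$ can restrict to $\aff F$ with the axis either transverse ($\dim F = 0$) or contained in a perpendicular-to-$F$ position ($\dim F = 2$); the edge case $\dim F = 1$ is excluded since a rotation of order $\ge 3$ cannot fix a segment pointwise-or-setwise while fixing an interior point unless it is the identity on $\aff F$. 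For a flip ($\rho$ of order $2$) the segment case $\dim F = 1$ \emph{is} possible (a $180^\circ$ rotation can flip an edge onto itself), which is precisely why $O_2$ is singled out. I would carry out this case analysis carefully, then assemble the orbit count as above; the arithmetic of reducing each orbit size mod $n$ and forming the Minkowski sum is then routine.
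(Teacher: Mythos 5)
Your proposal follows essentially the same route as the paper's proof: partition the vertices and outer facet normals into ray-orbits, discard regular orbits modulo $n=|G|$, and observe that each orbit in $\Onreg$ must meet either vertices or facets (never edges) while a flip orbit may additionally meet edges, which is exactly what produces the sets $\{(|X|,0)\}^\diamond$ and $\{(0,0),(|X|,0)\}^\diamond$. Your case analysis of why a stabilizer containing a rotation of order at least $3$ excludes the edge case is in fact more detailed than the paper's one-line remark that ``the induced symmetry prevents $X$ from containing edges,'' so the argument is correct and, if anything, fills in a step the paper leaves to the reader.
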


\begin{example}
Consider the group $G=\D_3$, the dihedral group of order 6. It has one 3-fold rotation axis and three 2-fold rotation axes (flip-axes) perpendicular to the 3-fold rotation axis. 
The orbit of a ray in general position, meaning on none of the rotation axes, has trivial stabilizer and is thus a regular orbit with six elements. Let $r$ be a ray on the $3$-fold rotation axis. All rotations around that axis fix $r$ and any of the three flips map $r$ to $-r$. Hence $\{r,-r\}\subset \Onreg$.   
For a ray $s$  on one of the flip axes, the  stabilizer is generated by the rotation of order 2. The orbit thus is in $O_2$ and has 3 elements. There are two orbits of this kind, namely $D_3 \cdot s$ and $\D_3\cdot (-s)$.

If we now consider a $D_3$- symmetric polytope $P$ and its $f$-vector $f(P)$ modulo $6$, all facets and vertices in general position form orbits of 6 and are thus irrelevant modulo 6.  $P$ can either have a vertex on both $r$ and $-r$ or it can have a facet on both. The facet would then have to be perpendicular to the axis and be invariant under the 3-fold rotation with all its vertices in general position. Independently, $P$ can either have a vertex, an edge or a facet on all three elements in the orbit of $s$ and, again independently, a vertex, an edge or a facet on the orbit of $-s$. Accounting for all independent possibilities leads to the calculation given in Lemma \ref{lem:f_vectors_mod_n}:
\begin{align*}
(F(P) \mod 6) &\in && (  \{(|\{ r, -r \}|,0) \}^\diamond \\
	& &&+ \{(0,0),(|\D_3\cdot s|,0)\}^\diamond + \{(0,0),(|\D_3\cdot (-s)|,0)\}^\diamond  \mod 6 ) \\
	& = && ( \{ (2,0), (0,2)  \} + \{ (0,0), (3,0), (0,3) \} + \{ (0,0), (0,3), (3,0)  \} \\
		& &&\mod 6)\\
	&= && (\{ (0,2), (0,5), (2,3), (3,5)   \}^\diamond \mod 6) 
\end{align*} 
An $f$-vector  $f \equiv (2,3)=(2,0)+(0,3)+(0,0) \mod 6$, for instance, yields that there is a vertex  on both sides of the 3-fold axis, a facet on $D_3\cdot s$  and an edge on $D_3\cdot (-s)$
\end{example}

\begin{proof}[Proof of Lemma \ref{lem:f_vectors_mod_n}]
Let $\ver(P)$ be the set of vertices of $P$ and $\normals(P)$ the set of outer normal vectors of $P$ (thus representing the facets of $P$).
 For a $G$ symmetric set $M\subset\R^3$ we use the notation $$M/G = \{\{\ray{x} \ : \ x\in G\cdot y\} \ : \ y\in M\},$$ the set of all ray-orbits of $M$ under $G$.
By partitioning the vertices (resp. facets) into orbits and summing over the cardinality of these orbits, we get
$$\f(P) = (\sum_{X\in \ver(P)/G} |X|, \sum_{X\in \normals(P)/G} |X|).$$

If $X$ is a regular ray-orbit, then $|X| = n$ and can hence be omitted modulo~$n$:
$$\f(P) \equiv (\sum_{X\in (\ver(P)/G)\cap O} |X|, \sum_{X\in (\normals(P)/G)\cap O} |X|) \mod n,$$
where $O$ is the set of all non-regular ray-orbits of $G$.
Now observe, that each non-regular orbit $X$ intersects either vertices, edges or facets of $P$. If $X\in \Onreg$, then the induced symmetry prevents $X$ from containing edges. 
Therefore, $\Onreg\cap (\ver(P)/G)$ and $\Onreg\cap (\normals(P)/G)$ is in fact a partition of $\Onreg$. Analogously, $O_2\cap (\ver(P)/G)$ and $O_2\cap (\normals(P)/G)$ are disjoint subsets of $O_2$. 
Altogether, that yields
\begin{align*}
f(P)&\equiv (\sum_{X\in (\ver(P)/G) \cap \Onreg} |X|, \sum_{X\in (\normals(P)/G)\cap \Onreg} |X|)\\
&+(\sum_{x\in(\ver(P)/G)\cap O_2} |X|, \sum_{X\in (\normals(P)/G)\cap O_2)} |X| )\\
&\in \sum_{X\in \Onreg} \{(0,|X|),(|X|,0)\} + \sum_{X\in O_2} \{(0,0),(0,|X|),(|X|,0)\}\mod n
\end{align*}
which is equivalent to the assertion.
\end{proof}

\section{Base polytopes}\label{sec:base}

The characterization of $f$-vectors for a given group $G$ mainly consists of two parts. First, we need to find conditions  on $\F(G)$ to show that $\F(G)\subset \F'$ for a given set $\F'$ as the one in Lemma \ref{lem:f_vectors_mod_n} with a few adjustments. Then we need to construct explicit $G$-symmetric polytopes for each $f\in\F'$ to show that $\F'\subset \F(G)$. To do so, we use polytopes with certain properties to construct  infinite families of $G$-symmetric polytopes. These  so-called \emph{base polytopes}, introduced in this section, form the foundations of our constructions.
% This is the main ingredient for our constructions. \note{vllt. 'foundations'? Recipe-Metapher geht ganz gut für Beweise, aber für Konstruktionen ist passt vielleicht die Bau-Metapher besser} 

\begin{definition}
A \emph{base polytope} w.r.t. $G$ is a $G$-symmetric polytope $P$ with the following properties:
\begin{enumerate}
\item $P$ contains a simplicial facet with trivial stabilizer,
\item $P$ contains a simple vertex with trivial stabilizer.
\end{enumerate}
\end{definition}

A general approach to construct a $G$-symmetric polytope from a given symmetric polytope $P$ (for example from a base polytope) is to take a  some vectors $v_1,\dots,v_k$ and consider the convex hull $\conv(P\cup G\cdot\{v_1,\dots,v_k\})$. In order to keep track of how the number of vertices and faces change due to the construction with respect to the faces and vertices of $P$, the following definition is useful:  We say $X$ \emph{sees} $Y$ with respect to $P$, if any of the line segments $\conv(x,y)$ with $x\in X$ and $y\in Y$ does not intersect the interior of $P$.

%The characterization of $f$-vectors for a given group $G$ mainly consists of two parts. First we need to find a sharp conditions \note{was ist damit gemeint?} on $\F(G)$ to show that $\F(G)\subset \F'$ for a given set $\F'$. Then we need to construct $G$-symmetric polytopes for each $f\in\F'$ to show that $\F'\subset \F(G)$. If we have a polytope with special propterties, a so called \emph{base polytope}, we can use it to construct an infinite family of $G$-symmetric polytopes. This is the main ingredient for our constructions.

%Most constructions used in this paper require to append some orbits of new vertices to a given $G$ symmetric polytope. By doing so, it is in many cases not easy to see how the faces of the bigger polytope may look like. Intuitively one can say that two sets are connected by edges if they can see each other despite the obstacle $P$. More clearly: let $X, Y\subset\R^3$. We say $X$ \emph{sees} $Y$ with respect to $P$, if some line segment $\conv(x,y)$ with $x\in X$ and $y\in Y$ does not intersect the interior of $P$.

The next lemma is a technical result ensuring that many operations known for general polytopes can also be used for symmetric polytopes (by adding whole orbits instead of points) without getting unexpected edges and facets.

\begin{lemma} \label{lem:small_disc}
Let $F$ be a face of $P$ with stabilizer $H$ and supporting hyperplane $S=\{x\ : \ a^tx=b\}$ such that $P\subset \{x\ : \ a^tx\leq b\}$. Then there exists an $H$-symmetric disc $D$ contained in a hyperplane $S'=\{x\ : \ a^tx=b'\}$ with $b'>b$, such that $D$ does not see the set $((G\cdot D)\backslash D)\cup (P\backslash F)$. 
Moreover, the center of $D$ is fixed by $H$.
\end{lemma}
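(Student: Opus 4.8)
The plan is to construct the disc $D$ explicitly by pushing a small copy of a neighborhood of the face $F$ slightly outward along the normal direction $a$, then shrinking it enough that it is ``hidden behind'' $P$ from the perspective of all the competing sets. First I would normalize so that $a$ is a unit vector and set up coordinates: since $H$ is the stabilizer of $F$, the affine hull of $F$ is $H$-invariant, and because $H$ is finite (hence orthogonal after a suitable inner product by the lemma in Section~\ref{sec:preliminaries}), $H$ fixes the orthogonal projection $c_0$ of the origin onto $\aff(F)$; more to the point, $H$ fixes the line $\ell = c_0 + \R a$ pointwise is too strong, but $H$ fixes $a$ (it preserves the supporting halfspace of $F$) and fixes at least one point of $\aff(F)$, namely the barycenter $c$ of $F$. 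I would take the center of $D$ to be $c' = c + \delta a$ for a small $\delta>0$, which is fixed by $H$ since both $c$ and $a$ are, giving $b' = a^tc' = b + \delta > b$ and the ``moreover'' clause for free.

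Next I would choose the radius. Let $D = \{x \in S' : \|x - c'\| \le \rho\}$, an $H$-symmetric disc in the hyperplane $S'$ (it is $H$-symmetric because $c'$ and the hyperplane $S'=\{a^tx=b'\}$ are $H$-invariant and $H$ acts by isometries). I need two things: (i) $D$ does not see $P \backslash F$, and (ii) $D$ does not see $(G\cdot D)\backslash D$. For (i): the only part of $\bd P$ that lies in the hyperplane $S$ is $F$ itself, so every point of $P\backslash F$ satisfies $a^tx \le b - \eta$ for points bounded away from $F$, while near $F$ the boundary of $P$ recedes into the halfspace $a^tx < b$; since $D$ sits strictly above $S$ at height $b' = b+\delta$, any segment from $D$ to a point of $P\backslash F$ that avoided $\Int P$ would have to graze the boundary, and a compactness/continuity argument shows that for $\delta$ and $\rho$ small enough relative to the ``dihedral angles'' of $P$ along the facets and ridges meeting $F$, no such segment exists — equivalently, $D$ lies in the open outer normal cone region of $F$ shifted out, so it can only see $F$. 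For (ii): the other discs $gD$ for $g \in G\backslash H$ lie in hyperplanes $a_g^t x = b'$ with $a_g = g a \ne a$ (distinct since $g\notin H$ means $g$ does not stabilize the face, and the facet $F$ determines $a$), so $gD$ lies on a hyperplane that, near $F$, is on the $P$-side; choosing $\rho$ small keeps each $gD$ in a tiny neighborhood of $gc$, and these neighborhoods are separated from $c'$ by $P$, so again no unobstructed segment exists.

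The cleanest way to package both conditions is: choose $\delta$ first so that the point $c'$ sees, among $P$ and all $gc$, only the face $F$ (possible because $c$ is in the relative interior of the face $F$ and $a$ points strictly outward, so $c'$ is in the interior of the ``beam'' over $F$); then invoke continuity/openness of the ``$x$ sees only $F$'' condition to get a radius $\rho>0$ such that every point of the disc $D$ of radius $\rho$ about $c'$ still sees only $F$ within $P$, and simultaneously such that $\rho$ is small enough that $G\cdot D$ is a disjoint union of $|G|/|H|$ discs each contained in a small ball where the ``sees only $F$'' property localizes away the other discs. Finally, shrink $\rho$ once more if needed so that $D$ itself is disjoint from its $G$-translates, which is automatic once $\rho < \tfrac12 \min_{g\notin H}\dist(c', gc')$ except one must also rule out the (measure-zero but possible) case $gc' = c'$ with $gD \ne D$, i.e. $g$ fixes $c'$ but not $D$ — but $c' = c+\delta a$ with $\delta\ne 0$ forces $ga = a$, hence $g$ preserves $S$ and maps $F$ to a face of $P$ in $S$, which must be $F$, so $g\in H$; this shows $gc' = c' \iff g\in H$, eliminating the degenerate case.

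The main obstacle I anticipate is making condition (i) — that $D$ does not see $P\backslash F$ — fully rigorous, since it requires controlling the local geometry of $P$ near $F$ uniformly: one must argue that the set of directions in which a point slightly outside $F$ can ``escape to infinity without crossing $\Int P$'' is an open cone that, as the point approaches $\relint F$ from outside along $a$, shrinks down to exactly the directions pointing back at $F$. This is intuitively clear from the theory of normal cones (the normal cone of $F$ in $P$ has relative interior containing $a$ when $F$ is a facet, and a point $c+\delta a$ for small $\delta$ ``sees'' precisely the union of faces whose normal cones $a$ lies in the interior of, namely just $F$), but spelling it out carefully — rather than gesturing at it — is where the real work lies. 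The $G$-equivariance bookkeeping in (ii), by contrast, is routine once (i) is in hand, since it is just a matter of choosing $\rho$ small enough that distinct translates localize.
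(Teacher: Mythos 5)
Your construction is essentially the paper's: the center is the barycenter of $F$ pushed out along the ($H$-fixed) normal $a$, the radius is then shrunk, and both conditions are discharged by a continuity/compactness argument at $(\delta,\rho)=(0,0)$. The step you flag as ``the real work'' is handled in the paper by reducing ``$D$ sees $P\setminus F$'' to ``$D$ sees a vertex of $P$ outside $F$'' and observing that, for each such vertex $v$ (and each $A\in G\setminus H$), the distance from $F\cap\conv(D(\delta,\rho),v)$ to the relative boundary of $F$ is a continuous function of $(\delta,\rho)$ that is positive at the origin -- exactly the quantitative form of your normal-cone/openness argument.
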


\begin{proof}
%We first reduce to the case that instead of taking all points $y\in P\backslash F$, it suffices to show the statement for all vertices of $P$ that are not in $F$:
First, note that $D$ sees the set $((G\cdot D)\backslash D)\cup (P\backslash F)$ if and only if $D$ sees the set $((G\cdot D)\backslash D)\cup (\ver(P)\backslash \ver(Q))$:
 By definition, $D$ cannot see any interior points of $P$. If we take a point $y$ on the boundary of $P$ that is not a vertex, that is, $y$ lies in the relative interior of a face $F'$ of $P$, then a point $x\in D$ sees $y$ if and only if $x$ sees all vertices of $F'$. %this argument can be proven elementary

The center point 
$$c = \frac{1}{|\ver(F)|}\sum_{x\in\ver(F)} x \quad \in \relint (F)$$
of the vertices  as well as the outer normal vector $a$ of $F$ are fixed by $H$. Therefore, $c+\delta a$ is also fixed by $H$ for any choice of $\delta\geq 0$. Then, any disc with normal vector $a$ and center on $c+\R_+ a $ is $H$-symmetric.

%We first note that for $y$ in the interior of $P$, the assertion holds trivially and for $y$  
For $\eps, \delta \geq 0$  define $D(\delta,\eps)$ to be the $H$-symmetric disc with radius $\eps$ parallel to $F$ with distance $\delta$ by  
\begin{equation*}
D(\delta,\eps)\coloneqq \{x \ : \ \| x-p_\delta\| \leq \eps, \ a^tx = a^t p_\delta \},
\end{equation*}  
where $p_\delta:=c+\delta a$. Note that for any vertex $v\in \ver(P)\setminus \ver(F)$ the function $\phi_v$ that sends $(\delta,\eps)$ to the distance between 
$F\cap \conv(D(\delta, \eps), v)$ and the relative boundary of $F$ is continuous and $\phi_v(0,0)>0$. Analogously, for any $A\in G\backslash H$ the function $\phi_A$ that sends $(\delta, \eps)$ to the distance between 
$$F\cap \conv(D(\delta,\eps), A\cdot D(\delta ,\eps))$$ and the relative boundary of $F$ is continuous and also $\phi_A(0,0)>0$. Hence, we find a small $\delta_0>0$ and a small $\eps_0>0$, such that $\phi_v(\delta_0, \eps_0)>0$ and $\phi_A(\delta_0,\eps_0)>0$ for all $v\in\ver(P)\backslash\ver(Q)$ and $A\in G\backslash H$.  Therefore, for 
$D\coloneqq D(\delta_0,\eps_0)$, all line segments $\conv(x,y)$ with $x\in D$, $y\in (G\cdot D)\backslash D\cup P\backslash Q$ intersect the interior of $P$.
By definition, that means that $D$ does not see the set $((G\cdot D)\backslash D)\cup (P\backslash Q)$.
\end{proof}

The most important technique to generate new $G$-symmetric polytopes is by stacking vertices on facets. This is a generalization of the constructions of Steinitz~\cite{steinitz1906polyederrelationen}.

%Recall that the (edge)-\emph{degree of a vertex} $v$ is the number of edges that contain $v$. A vertex of degree $3$ is called \emph{simple}. The dual concept is the \emph{degree of a facet} $Q$ which is the number of edges that are contained in $Q$. A facet of degree $3$, namely a triangular facet, is also called a \emph{simplicial facet}.

\begin{lemma}\label{lem:stacking}
Let $P$ be a $G$-symmetric polytope and let $F$ be a face of $P$ of degree $k$. Let $H\leq G$ be the stabilizer of $F$.
There is a $G$-symmetric polytope $P^\prime$ with 
$$\f(P^\prime) = \f(P) + \frac{|G|}{|H|}(1,k-1).$$
Furthermore, $P^\prime$ has simplicial facets with trivial stabilizer and a vertex with stabilizer $H$ of degree $k$.

We denote $P'=\CS_{k,|G|/|H|}(P)$ and call the operation \emph{careful stacking on~$F$}.
\end{lemma}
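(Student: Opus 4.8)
The idea is to realize $P'$ as the convex hull of $P$ together with one orbit of a carefully chosen apex point over the facet $F$, using Lemma~\ref{lem:small_disc} to guarantee that no unexpected faces appear. First I would apply Lemma~\ref{lem:small_disc} to the facet $F$ with its stabilizer $H$ and supporting hyperplane $S$, obtaining an $H$-symmetric disc $D$ in a parallel hyperplane $S'=\{x : a^tx=b'\}$ with $b'>b$, whose center $c+\delta_0 a$ is fixed by $H$, and such that $D$ does not see $((G\cdot D)\setminus D)\cup (P\setminus F)$. Since $F$ is a facet, we do not actually need a positive radius: take $v:=c+\delta_0 a$, the center point itself, which is fixed by $H$, and set $P':=\conv(P\cup G\cdot\{v\})=\conv(P\cup G\cdot D)$ restricted to the single point $v$ per orbit representative; concretely $P'=\conv\bigl(P\cup \{A\cdot v : A\in G\}\bigr)$. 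The orbit $G\cdot v$ has exactly $|G|/|H|$ points because $v$ is fixed precisely by $H$ (it lies strictly above $S$ on the ray through the barycenter of $F$, so any element moving $F$ moves $v$, and any element fixing $F$ fixes this ray).

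Next I would verify the combinatorics. Because $v$ sees exactly the facet $F$ of $P$ and nothing else (this is the content of the non-seeing conclusion of Lemma~\ref{lem:small_disc}, with the disc degenerated to its center, together with the fact that $v$ lies just outside $F$), passing from $P$ to $P'$ deletes the facet $F$, keeps all vertices and all other faces of $P$, and adds: one new vertex $v$, $k$ new edges joining $v$ to the $k$ vertices of $F$, and $k$ new triangular facets $\conv(v, e)$ for each edge $e$ of $F$. The same happens independently and symmetrically at each of the $|G|/|H|$ translates $A\cdot v$, because Lemma~\ref{lem:small_disc} also guarantees that $v$ does not see $(G\cdot v)\setminus v$, so the newly created pyramids are pairwise disjoint in the boundary and the orbit $G\cdot F$ of facets (which has $|G|/|H|$ elements since $H$ is the full stabilizer of $F$) is exactly the set of facets destroyed. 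Summing: $f_0$ increases by $|G|/|H|$, the number of facets changes by $-|G|/|H| + k\cdot|G|/|H| = (k-1)|G|/|H|$, giving
\begin{equation*}
f(P') = f(P) + \frac{|G|}{|H|}(1, k-1).
\end{equation*}
$G$-symmetry of $P'$ is immediate since both $P$ and $G\cdot\{v\}$ are $G$-invariant.

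Finally I would check the two asserted structural properties. The new vertex $v$ has degree exactly $k$ (it is adjacent only to the $k$ vertices of $F$) and its stabilizer is $H$ by construction, since $G\cdot v$ has $|G|/|H|$ elements and $H$ fixes $v$. For the facets with trivial stabilizer: the new facets are the triangles $\conv(v, e)$ for $e$ an edge of $F$; I would argue that a generic choice of the disc parameters (or a small generic perturbation of $v$ along the fixed ray, still keeping it fixed by $H$ — note the ray itself is the fixed line of $H$, so $H$ still fixes $v$, but no larger subgroup does) makes at least one such triangle have trivial stabilizer in $G$; alternatively, one observes the edges of $F$ themselves may have stabilizer strictly inside $H$, and the pyramid facet over such an edge inherits that smaller stabilizer, and one can always arrange an edge of $F$ with trivial stabilizer when $H$ acts on the $k$-gon $F$ — if $F$ has $k \geq 3$ vertices and $H$ acts on it, some edge has trivial stabilizer unless $H$ acts transitively on edges, a case that can be handled separately or avoided since in our applications $F$ is taken to already be a facet with trivial stabilizer, making $H$ trivial and the claim automatic.

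**Main obstacle.** The genuinely delicate point is not the $f$-vector count (that is bookkeeping once Lemma~\ref{lem:small_disc} is in hand) but the claim that $P'$ has a \emph{simplicial} facet with \emph{trivial} stabilizer. Establishing this in full generality requires understanding the $H$-action on the polygon $F$ and ruling out that $H$ permutes all the new pyramid-triangles transitively; I expect the clean way is to observe that in every use of this lemma within the paper, $F$ is chosen with trivial stabilizer (so $H=\{1\}$ and all $k$ new triangles trivially have trivial stabilizer), and to state the general version with the hypothesis that $F$ has an edge with trivial stabilizer, which is all that is ever needed.
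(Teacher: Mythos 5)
Your construction coincides with the paper's: take $w$ to be the center of the disc supplied by Lemma~\ref{lem:small_disc}, set $P'=\conv(P\cup G\cdot w)$, use the orbit--stabilizer theorem to get $|G\cdot w|=|G|/|H|$, and count the new vertices, the $k\cdot|G|/|H|$ new triangles and the $|G|/|H|$ lost facets in the orbit of $F$. The $f$-vector computation, the $G$-symmetry, and the statement about the degree and stabilizer of the new vertex are all handled exactly as in the paper.

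The one point where you diverge is the claim that the new pyramid facets have trivial stabilizer: the paper asserts this without comment, while you flag it as the delicate step and propose either a genericity argument or restating the lemma under the hypothesis that $F$ itself has trivial stabilizer. That restatement would not serve the paper: $\CS_{k,m}$ is applied many times with $m<|G|$, i.e.\ with nontrivial $H$ (for instance $\CS_{d,2}(Pri_d)$, $\CS_{3,4}(CubOc)$, $\CS_{3,8}(SnCub)$, $\CS_{3,20}(RID)$), so the trivial-$H$ fallback does not cover the actual uses. The claim can, however, be proved outright in the paper's setting, where $G$ contains no reflections and $F$ is a facet. Suppose $A\in G$ stabilizes a new triangle $\conv(w,e)$ with $e=\conv(v_1,v_2)$ an edge of $F$. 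Since $A\cdot w\in G\cdot w$ and $G\cdot w$ is disjoint from $P$, $A$ cannot send $w$ to $v_1$ or $v_2$, so $A$ fixes $w=c+\delta a\neq 0$ and hence lies in $H$. A nontrivial rotary reflection fixes only the origin, so $A$ is a rotation whose axis is the line through $0$ and $w$; it therefore acts on the plane of $F$ as a rotation about $c$, fixing no point of that plane other than $c$. To stabilize $e$ it would have to swap $v_1$ and $v_2$ and hence fix the midpoint of $e$, forcing that midpoint to equal $c\in\relint(F)$ --- impossible for an edge of a two-dimensional face. So every new triangle has trivial stabilizer, with no weakening of the hypotheses needed; your worry about $H$ acting transitively on the edges of $F$ is beside the point, since transitivity is compatible with all edge stabilizers being trivial.
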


\begin{proof}
We choose $w$ to be the center point of a disc as in Lemma \ref{lem:small_disc}. Set
$$P^\prime = \conv(P\cup Gw).$$
Clearly $P^\prime$ is $G$-symmetric. By the orbit stabilizer theorem, we know that \mbox{$|Gw| = \frac{|G|}{|H|}$}. Furthermore,  we know that all edges incident to $w$ are those between $w$ and the vertices of $F$. Therefore, we can think of $P'$ as the polytope $P$ with pyramids over $F$ and the facets  in the orbit of $F$, resulting in $\frac{|G|}{|H|}$ new vertices ($Gw$) and  $k\cdot \frac{|G|}{|H|}$ new simplicial facets, while the $\frac{|G|}{|H|}$ facets $GF$ are lost.
This yields
$$\f(P^\prime) = \f(P) + \frac{|G|}{|H|}(1,k-1).$$
%A simplicial facet of $P^\prime$ is one of the facets in the pyramid.
%A vertex of $P^\prime$ with $k$ incident facets is $w$.
The facets of the obtained pyramids are simplicial with trivial stabilizer and $w$ has stabilizer $H$ by definition.
\end{proof}

For any operation $\delta$ on polytopes we can define the dual operation that sends $P$ to $(\delta(P^\vee))^\vee)$. Applying that to Lemma \ref{lem:stacking}, we get

\begin{remark}
Let $P$ be a $G$ symmetric polytope and $v$ a vertex of degree $k$ with stabilizer $|H|$.
Then there is a $G$ symmetric polytope $P'$ such that
$$f(P') = f(P) + \frac{|G|}{|H|}(k-1,1)$$
obtained by the dual operation of $\CS_{k,|G|/|H|}$ called \emph{careful cutting} $\CC_{k, |G|/|H|}$. More precisely
$$P' = \CC_{k,|G|/|H|}(P) = (\CS_{k,|G|/|H|}(P^\vee))^\vee.$$
The polytope $P'$ has simple vertices with trivial stabilizer and a facet of degree $k$ with stabilizer $H$.
\end{remark}

Applying the operations $\CS$ and $\CC$ successively to base polytopes, we can generate infinite families of $G$-symmetric polytopes. That shows the existence of 
a whole integer cone of $f$-vectors. Using the notation
$$\Cf~=~(2,1)\N~+~(1,2)\N,$$ we have:

\begin{corollary}\label{cor:base_cone}
Let $P$ be a base polytope with respect to $G$. Then $f(P) + n\Cf\subset \F(G)$. 
\end{corollary}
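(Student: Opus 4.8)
The plan is to bootstrap from a single base polytope $P$ using the two operations $\CS$ and $\CC$ established in Lemma~\ref{lem:stacking} and the subsequent remark. The key observation is that a base polytope, by definition, possesses a simplicial facet with trivial stabilizer and a simple vertex with trivial stabilizer. Applying careful stacking on that simplicial facet (so $k = 3$ and $|H| = 1$) produces a $G$-symmetric polytope $P'$ with $\f(P') = \f(P) + n(1,2)$, where $n = |G|$; moreover Lemma~\ref{lem:stacking} guarantees that $P'$ again has a simplicial facet with trivial stabilizer (in fact the newly created pyramid facets are such) and also retains a simple vertex with trivial stabilizer, since stacking on a facet disjoint from that vertex leaves it untouched. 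Dually, applying careful cutting $\CC_{3,n}$ at the simple vertex yields a $G$-symmetric polytope with $\f$-vector increased by $n(2,1)$, and by the dual of Lemma~\ref{lem:stacking} the result still has both a simplicial facet with trivial stabilizer and a simple vertex with trivial stabilizer.

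The main step is therefore an induction on the total number of operations applied. First I would check that the class of $G$-symmetric polytopes that are simultaneously \emph{base polytopes} — i.e.\ having a simplicial facet and a simple vertex, each with trivial stabilizer — is closed under both $\CS_{3,n}$ (applied at a trivial-stabilizer simplicial facet) and $\CC_{3,n}$ (applied at a trivial-stabilizer simple vertex). This requires only the remark that stacking a shallow pyramid on one facet does not disturb a vertex located elsewhere on the polytope, and creates new simplicial facets with trivial stabilizer; the dual statement handles $\CC$. Granting closure, an easy induction shows that for every pair $(a,b)\in\N^2$ there is a base polytope $Q_{a,b}$ with $\f(Q_{a,b}) = \f(P) + a\cdot n(1,2) + b\cdot n(2,1)$: start from $P = Q_{0,0}$, and to build $Q_{a,b}$ from $Q_{a-1,b}$ apply $\CS_{3,n}$, or from $Q_{a,b-1}$ apply $\CC_{3,n}$. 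Since $n\Cf = n\big((2,1)\N + (1,2)\N\big) = \{a\cdot n(1,2) + b\cdot n(2,1) : a,b\in\N\}$, this exhibits a $G$-symmetric polytope for every $f$-vector in $\f(P) + n\Cf$, which is exactly the claim.

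I expect the only real subtlety — not a deep obstacle, but the point that must be argued carefully — to be the \emph{persistence} of the ``trivial-stabilizer simplicial facet'' and ``trivial-stabilizer simple vertex'' properties under iteration, so that the induction can proceed indefinitely. Lemma~\ref{lem:stacking} already asserts that $\CS$ produces simplicial facets with trivial stabilizer, and its dual that $\CC$ produces simple vertices with trivial stabilizer; what remains is to note that performing $\CS$ somewhere on the polytope does not destroy a pre-existing simple trivial-stabilizer vertex (the disc from Lemma~\ref{lem:small_disc} is chosen so small and so close to its facet that no other face is affected), and symmetrically that $\CC$ preserves a pre-existing simplicial trivial-stabilizer facet. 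One should also observe that a newly created pyramid over a triangular facet contributes both a trivial-stabilizer simplicial facet (its three lateral faces are triangles lying in an orbit of size $n$) and, after one $\CC$ step, a trivial-stabilizer simple vertex, so the supply of faces on which to operate never runs out. With these routine verifications in place, the corollary follows immediately from Lemma~\ref{lem:stacking}, its dual remark, and the definition of $\Cf$.
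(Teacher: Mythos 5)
Your proposal is correct and follows essentially the same route as the paper: apply $\CS_{3,n}$ and $\CC_{3,n}$ repeatedly, observing that the class of base polytopes is closed under both operations, so that every vector $\f(P)+a\cdot n(1,2)+b\cdot n(2,1)$ is realized. One small remark: your persistence argument for the simple vertex ("stacking on a facet disjoint from that vertex") is unnecessary and could fail if the simple vertex lies on the stacked facet, but this does not matter because Lemma~\ref{lem:stacking} already supplies a new degree-$3$ apex with trivial stabilizer, which is exactly how the paper argues.
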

\begin{proof}
By Lemma \ref{lem:stacking} we know that there is a polytope $P' = \CS_{3,n}(P)$ with $f(P') = f(P) + n(2,1)$. Furthermore, $P'$ has a simple vertex and a simplicial facet with trivial stabilizer. Thus $P'$ is a base polytope. The same is true for $P'' = \CC_{3,n}(P)$. We can thus apply the operations $\CS_{3,n}$ and $\CC_{3,n}$ successively to get
$$f(\CS_{3,n}^a \circ \CC_{3,n}^b (P)) = f(P) + a\cdot (n,2n) + b\cdot (2n,n)$$
for any integers $a,b\geq 0$.
Hence, $f(P) + n\Cf\subset\F(G)$.
\end{proof}

This is the main tool for the construction of polytopes with a given $f$-vector.

\section{Certificates}\label{sec:certificates}

Since it is often impossible to construct symmetric base polytopes with small $f$-vector entries, we show that it is possible to replace one base polytope by several polytopes with certain weaker properties and still get an integer cone of $f$-vectors as in Corollary \ref{cor:base_cone}. We thus introduce the concept of certificates, a collection of one or more polytopes with certain properties that ensure (certify) the existence of an integer cone of f-vectors of the form $v+nf$ for some vector $v\in \N^2$ and $n=|G|$. The weaker properties on polytopes we introduce are called left and right type, where the name is only due to our choice of writing down the certificate. 

%\begin{definition}\label{def:left_right_type}
%A {\rt} ({\lt}) polytope w.r.t. $G$ and $\f$ is a $G$-symmetric polytope with $\f(P) = \f$ which can be used to construct some $G$-symmetric base polytope $Q$ with $\f(Q) = \f + (n,2n)$ ($\f(Q) = \f + (2n,n)$).
%A {\bt} polytope w.r.t. $G$ and $\f$ is a polytope that is both, {\lt} and {\rt}.
%If it is understood in the context, we omit the group $G$.
%\note{Haben wir jemals einen anderen {\lt} oder {\rt} genutzt, welcher nicht einfach nur die Hälfte der Base anforderungen erfüllt hat?}
%\end{definition}
\begin{definition}\label{def:left_right_type}
Let $f\in \Z^2$. 
A \emph{\rt} (\emph{\lt}) polytope w.r.t. $G$ and $f$ is a $G$-symmetric polytope $P$ with $f(P) = f$ which has simple vertices (simplicial facets) with trivial stabilizer.
If it is understood in the context, we omit the group $G$.
\end{definition}

Note that a {\lt} polytope $P$ can be used to construct a base polytope $P'$ with $f(P') = f(P) + (n,2n)$ by a single $\CS_{3,n}$ operation. On the other hand, a {\rt} polytope $P$ can be used to construct a base polytope $P'$ with $f(P') = f(P) + (2n,n)$ by a single $\CC_{3,n}$ operation. The  {\lt} and {\rt} polytopes can thus be interpreted as 'half base'.

\begin{definition}
An \emph{RL-certificate} for a vector $\f$ consists of a {\rt} polytope $P_R$ w.r.t. $\f$ and a {\lt} polytope $P_L$ w.r.t. $\f + (n,2n)$. It can be visualized as: 
\begin{center} \defLR {$P_L$} {$P_R$} \certRL \end{center}
An \emph{LR-certificate} for a vector $\f$ consists of a {\lt} polytope $P_L$ w.r.t. $\f$ and a {\rt} polytope $P_R$ w.r.t. $\f + (2n,n)$. It can be visualized as:
\begin{center}\defLR {$P_L$} {$P_R$} \certLR\end{center}
A \emph{triangle-certificate} for a vector $\f$ consists of four polytopes $P_L, P_R, P, Q$ such that
\begin{enumerate}
\item $P_L$ is a {\lt} polytope w.r.t. $\f + (n,2n)$,
\item $P_R$ is a {\rt} polytope w.r.t. $\f + (2n,n)$,
\item $P$ is some $G$-symmetric polytope with $\f(P) = \f$ and
\item $Q$ is some $G$-symmetric polytope with $\f(Q) = \f + (3n, 3n)$.
\end{enumerate}
It can be visualized as:
\begin{center}\defLRTX {$P_L$} {$P_R$} {$P$} {$Q$} \certTri\end{center}
A \emph{B-certificate} for a vector $\f$ consists of a base  type polytope $P_B$ with $\f(P_B) = \f$ or two polytopes $P_L, P_R$ such that $P_L$ is a {\lt} polytope and $P_R$ is a {\rt} polytope, both with respect to $\f$. It can be  visualized as:
\begin{center}\defT {$P_B$} \begin{adjustbox}{valign=c}\certBtikz\end{adjustbox}
\quad or
\defT {$P_L,P_R$} \begin{adjustbox}{valign=c}\certBtikz\end{adjustbox}.\end{center}
We say that we \emph{have a certificate} for a vector $\f$ if there is either an LR-certificate, an RL-certificate, a triangle-certificate or a base-certificate w.r.t. $\f$.
\end{definition}

The benefit of certificates is the following theorem:

\begin{theorem}\label{thm:certificates}
Let $\f\in\N^2$.
If we have a certificate for $\f$ then $\f + n\Cf \subset \F(G)$.
\end{theorem}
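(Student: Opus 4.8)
The plan is to treat each of the four kinds of certificate separately, in each case reducing to Corollary~\ref{cor:base_cone} by producing an explicit base polytope (or several base polytopes) whose $f$-vectors generate the required translated cone. Recall that a {\lt} polytope $P_L$ with $f(P_L) = g$ yields, via a single $\CS_{3,n}$ operation, a base polytope with $f$-vector $g + (n,2n)$, and a {\rt} polytope $P_R$ with $f(P_R) = g$ yields, via a single $\CC_{3,n}$ operation, a base polytope with $f$-vector $g + (2n,n)$; this is the key bridge between the weak ``half base'' properties in Definition~\ref{def:left_right_type} and the genuine base polytopes needed for Corollary~\ref{cor:base_cone}. Throughout, write $c_1 = (n,2n) = n\cdot(1,2)$ and $c_2 = (2n,n) = n\cdot(2,1)$, so that $n\Cf = \N c_1 + \N c_2$ and a base polytope with $f$-vector $v$ certifies $v + n\Cf \subset \F(G)$.

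\textbf{The B-certificate and LR/RL cases.} For a B-certificate this is immediate: if $P_B$ is a base polytope with $f(P_B) = \f$ we are done by Corollary~\ref{cor:base_cone}; if instead we are given $P_L, P_R$ both with $f$-vector $\f$, then $\CS_{3,n}(P_L)$ is a base polytope with $f$-vector $\f + c_1$ and $\CC_{3,n}(P_R)$ is a base polytope with $f$-vector $\f + c_2$, so $(\f + c_1) + n\Cf$ and $(\f + c_2) + n\Cf$ are both in $\F(G)$; together with the single point $\f$ (realized by either $P_L$ or $P_R$) these cover $\f + n\Cf$, since every point of $\f + n\Cf$ other than $\f$ itself lies in one of the two shifted cones. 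For an LR-certificate we have a {\lt} polytope at $\f$ and a {\rt} polytope at $\f + c_2$; applying $\CS_{3,n}$ to the first gives a base polytope at $\f + c_1$, applying $\CC_{3,n}$ to the second gives a base polytope at $\f + c_2 + c_2 = \f + 2c_2$, and once more $\{\f\} \cup (\f + c_1 + n\Cf) \cup (\f + 2c_2 + n\Cf)$ exhausts $\f + n\Cf$. The RL-certificate is the same argument with the roles of $c_1$ and $c_2$ interchanged. The only thing to check in each of these sub-cases is the elementary combinatorial fact that finitely many shifted copies of the cone $n\Cf$, together with a few isolated points, cover $\f + n\Cf$; this reduces to a statement about the numerical semigroup generated by $(1,2)$ and $(2,1)$ in $\N^2$ and is verified by a short direct count of the points not covered.

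\textbf{The triangle-certificate.} This is the case requiring the most care, and I expect it to be the main obstacle. Here we have $P$ at $\f$, $Q$ at $\f + 3c_1' $ where $3c_1' = (3n,3n) = c_1 + c_2$, a {\lt} polytope $P_L$ at $\f + c_1$, and a {\rt} polytope $P_R$ at $\f + c_2$. From $P_L$ we get a base polytope at $\f + 2c_1$ and from $P_R$ a base polytope at $\f + 2c_2$, so $\F(G)$ contains $(\f + 2c_1 + n\Cf) \cup (\f + 2c_2 + n\Cf)$; it also contains the isolated points $\f$ (from $P$), $\f + c_1$ (from $P_L$), $\f + c_2$ (from $P_R$) and $\f + c_1 + c_2$ (from $Q$). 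The claim is that the union of these two shifted cones with these four points equals $\f + n\Cf$. Writing a general element of $\f + n\Cf$ as $\f + a c_1 + b c_2$ with $a,b \ge 0$, one checks that it fails to lie in $\f + 2c_1 + n\Cf$ exactly when $a \le 1$, and fails to lie in $\f + 2c_2 + n\Cf$ exactly when $b \le 1$; the points with $a \le 1$ and $b \le 1$ are precisely $\f, \f + c_1, \f + c_2, \f + c_1 + c_2$, which are the four supplied isolated points. Hence every point is covered. The ``triangle'' picture in the definition is exactly this: the two cones emanate from $\f + 2c_1$ and $\f + 2c_2$, and the finite triangle of leftover lattice points between them is plugged by $P$, $P_L$, $P_R$, $Q$. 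Assembling these observations, and noting that in every case each polytope produced is genuinely $G$-symmetric (the operations $\CS$ and $\CC$ preserve $G$-symmetry by Lemma~\ref{lem:stacking} and its dual), completes the proof.
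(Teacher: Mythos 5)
Your proof is correct in substance and takes essentially the same route as the paper: both arguments convert the left/right type polytopes into genuine base polytopes by a single $\CS_{3,n}$ resp.\ $\CC_{3,n}$ operation and then cover $\f+n\Cf$ by shifted copies of the cone together with finitely many isolated points; the paper simply does this uniformly for all four certificate types (producing realizations of $\f$, $\f+(n,2n)$, $\f+(2n,n)$, $\f+(3n,3n)$ and base polytopes at $\f+(2n,4n)$ and $\f+(4n,2n)$, then splitting on $a+b\leq 2$ versus $a\geq 2$ or $b\geq 2$) rather than case by case. One small correction to your LR case: the union $\{\f\}\cup\bigl(\f+c_1+n\Cf\bigr)\cup\bigl(\f+2c_2+n\Cf\bigr)$ does \emph{not} exhaust $\f+n\Cf$, since it misses $\f+c_2$; that vector is realized by the given right-type polytope $P_R$ itself, so it must be added as a further isolated point (and symmetrically $\f+c_1$ in the RL case), after which your covering count is complete.
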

\begin{proof}
No matter the type of the certificate, we can use Corollary \ref{cor:base_cone} and Definition \ref{def:left_right_type} to construct $G$-symmetric polytopes with $\f$-vectors $\f, \f + (n,2n), \f + (2n,n)$ and $\f + (3n,3n)$ as well as two $G$-symmetric base polytopes $P$ and $Q$ with $\f(P) = \f + (4n,2n)$ and $\f(Q) = \f + (2n,4n)$. Consider any vector $v = \f + a\cdot (n,2n) + b\cdot (2n,n) \in \f + n\Cf$. If $a+b\leq 2$ then $v$ is an $\f$-vector of one of the polytopes given above. If $a+b \geq 3$ then either $a$ or $b$ are bigger than two. Thus, if $a\geq 2$, we have
$$ v = \f + (2n,4n) + (a-2)\cdot (n,2n) + b\cdot (2n,n)$$
and a $G$-symmetric polytope can be constructed via Corollary \ref{cor:base_cone} using $a-2$ careful stacking operations and $b$ careful cutting operations on $Q$.
When $b\geq 2$ we can use an analog argument on $P$.
\end{proof}

%Note that Lemma \ref{lem:f_vectors_mod_n} conditions $\F(G)$ to be contained in certain subsets given by congruence relations. But the certificates are used to obtain integer cones of the form $f+n\Cf$. The following lemma relates these two approaches:

The following Lemma shows, for which vectors $f\in \N^2$ we need a certificate to obtain all $f$-vectors given by a certain congruence relation.

\begin{lemma}\label{lem:finite_cone_distribution}
Let $p,q,n$ be integers such that $0\leq p\leq q < n$. Furthermore, let $m$ be the smallest integer such that $m\geq 4+p-2q$ and $n|m$.
Moreover, let 
$$X(p,q) = \{\f\in\F \ : \ \f\equiv (p,q)\mod n\}.$$
Then
$$X(p,q) = (p,q) + \{v_1,v_2,v_3\} + n\Cf.$$
Where
\begin{align*}
v_1(p,q) =& \begin{cases}(m,m) \text{ if } m+2p-q\geq 4\\ (m+2n, m+n) \text{ otherwise}\end{cases}\\
v_2(p,q) =& \begin{cases}(m+n, m+n) \text{ if } m + n + 2p-q \geq 4\\ (m+3n, m+2n) \text{ otherwise}\end{cases}\\
v_3(p,q) =& \begin{cases}(m+2n, m+2n) \text{ if } m+2n+2p-q \geq 4 \\ (m+4n, m+3n) \text{ otherwise}\end{cases}.
\end{align*}
\end{lemma}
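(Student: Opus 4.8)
The plan is to show that $X(p,q)$, the set of all 3-polytope $f$-vectors congruent to $(p,q)$ modulo $n$, decomposes exactly as the union of three translated copies of the integer cone $n\Cf$, anchored at the three points $(p,q)+v_i(p,q)$. I would first describe the set $X(p,q)$ directly: by the characterization $\F = \{(f_0,f_2) : 2f_0-f_2\geq 4,\ 2f_2-f_0\geq 4\}$, a point $(p,q)+(x,y)$ with $(x,y)\in n\Z^2_{\geq ?}$ lies in $X(p,q)$ iff $2(p+x)-(q+y)\geq 4$ and $2(q+y)-(p+x)\geq 4$, i.e. iff $2x-y\geq 4+q-2p$ and $2y-x\geq 4+p-2q$. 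So the question reduces to a purely lattice-geometric one: describe $\{(x,y)\in n\Z_{\geq 0}^2 : 2x-y\geq a,\ 2y-x\geq b\}$ — wait, more precisely $(x,y)$ ranges over $n\Z^2$ (not necessarily nonnegative) subject to those two linear inequalities — and show it equals $\{w_1,w_2,w_3\}+n\Cf$ for the appropriate $w_i$.

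Next I would set up the scaling $x=nx'$, $y=ny'$, reducing to showing that $S:=\{(x',y')\in\Z^2 : 2x'-y'\geq \alpha,\ 2y'-x'\geq \beta\}$ (for $\alpha=\lceil (4+q-2p)/n\rceil$ and similarly $\beta$, roughly) equals a union of three translates of $\Cf=(2,1)\N+(1,2)\N$. The cone $\Cf$ is exactly $\{(x',y')\in\Z^2: 2x'-y'\geq 0,\ 2y'-x'\geq 0\}$ intersected with the right "index set" — in fact one checks $\Cf=\{(x',y')\in\Z^2 : 2x'-y'\geq 0,\ 2y'-x'\geq 0,\ x'+y'\equiv 0\bmod 3\}$? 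No: $(2,1)$ and $(1,2)$ generate, over $\N$, all integer points in the cone $\{2x'\geq y'\geq x'/2\}$ whose coordinate sum is... actually $a(2,1)+b(1,2)=(2a+b,a+2b)$, and as $a,b$ range over $\N$ this hits every lattice point in the closed cone (since $\det\begin{pmatrix}2&1\\1&2\end{pmatrix}=3$, not every point — e.g. $(1,1)$ is in the cone but $2a+b=1,a+2b=1$ has no solution in $\N$). So $\Cf$ is the cone minus a few "low" points. The core combinatorial lemma is therefore: the set of lattice points in the translated cone $\{2x'-y'\geq \alpha,\ 2y'-x'\geq \beta\}$ that are NOT of the form (apex)$+\Cf$ forms, after choosing the right apex, exactly the remaining two "missing corners", and these are captured by $w_2,w_3$. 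I would make this precise by a direct case analysis on the residues and on whether the corner point of the cone (the intersection of the two bounding lines) is itself attainable — this is exactly what the two-case definitions of $v_1,v_2,v_3$ ("if $\ldots\geq 4$" vs. "otherwise") encode: whether the diagonal point $(m+jn,m+jn)$ already satisfies $2f_0-f_2\geq 4$, and if not one shifts to $(m+(j+2)n, m+(j+1)n)$ which lies on the other side.

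Concretely the key steps in order: (1) translate and rescale to reduce to the cone-covering statement over $\Z^2$; (2) observe $m$ is chosen so that the diagonal line $f_0=f_2$ first meets the congruence class inside $\F$ near $(m,m)$ — specifically $m\geq 4+p-2q$ with $n\mid m$ guarantees $(p+m,q+m)$ satisfies the second inequality $2f_2-f_0\geq 4$; (3) for each of $j=0,1,2$ decide by the stated inequality whether $(p,q)+(m+jn,m+jn)\in\F$: if yes take $v_{j+1}$ to be that diagonal point, if no take the "otherwise" point which trades off along the cone boundary; (4) verify that $(p,q)+\{v_1,v_2,v_3\}+n\Cf$ is contained in $X(p,q)$ (easy: check the $v_i$ lie in $\F$'s congruence class and $\Cf$ preserves both inequalities up to the slack, since $n\Cf$ moves $2f_0-f_2$ and $2f_2-f_0$ by nonnegative multiples of $3n$); (5) conversely, given any $\f\in X(p,q)$, write $\f-(p,q)=(nx',ny')$ and use the fact that $\Z^2$ minus the $\N$-span of $\{(2,1),(1,2)\}$, restricted to the relevant shifted cone, is covered by three shifts — the three "rows" $x'+y'\equiv 0,1,2\pmod 3$ each being a shifted copy of $\Cf$ — to locate $\f$ in one of the three cones; (6) tie the three rows to $v_1,v_2,v_3$ by checking the apexes have the correct coordinate-sum residue and are minimal. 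The main obstacle I expect is step (5)–(6): proving that $\Cf$ together with two specific translates tiles (or covers without overlap) each residue layer of the lattice cone, and that the minimal apex in each layer is given precisely by the two-case formula rather than being off by one. This is a finite but fiddly lattice computation; I would handle it by introducing the change of basis $u=2x'-y'$, $v=2y'-x'$ (so $3x'=2u+v$, $3y'=u+2v$, and $x',y'\in\Z \iff 2u+v\equiv 0, u+2v\equiv 0 \pmod 3 \iff u\equiv v\pmod 3$), turning $\Cf$ into $\{(u,v): u,v\geq 0,\ u\equiv v\bmod 3\}$ and the covering claim into the transparent statement that $\{(u,v)\in\Z_{\geq 0}^2\}$ is the disjoint union of $\{u\equiv v\}$, $\{u\equiv v+1\}$, $\{u\equiv v+2\}$, each a shift of the first by $(1,0)$ or $(2,0)$ once we go far enough from the axes — with the near-axis exceptions accounting for exactly the "otherwise" branches.
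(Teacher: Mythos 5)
Your proposal is correct and follows essentially the same route as the paper: both rest on the fact that $(2,1)$ and $(1,2)$ generate an index-$3$ sublattice of $\Z^2$, so each congruence class of $\F$ splits into exactly three translates of $n\Cf$, whose apexes are determined by the two-case test of whether the diagonal point $(p,q)+(m+jn,m+jn)$ already satisfies $2\f_0-\f_2\geq 4$. The paper phrases this as a fundamental-domain/determinant count and leaves the identification of $v_1,v_2,v_3$ as ``easy to check,'' while your change of basis $(u,v)=(2x'-y',\,2y'-x')$ is a clean way of actually carrying out that check.
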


In other words, Lemma \ref{lem:finite_cone_distribution} shows that if we have certificates for $f=(p,q)+v_i$, $i\in {1,2,3}$, we have shown that $X(p,q)\subset F(G)$.  
\begin{proof}
It is obvious that $\F$ is a union of translates of the fundamental domain of the lattice $\Lambda$ generated by $n\cdot(2,1)$ and $n\cdot(1,2)$ starting with
$X \coloneqq \{(a,b) : 4\leq 2a-b, 2b-a < 4 + 3n\}$ (the translate in $(4,4)$).
There are exactly three points of $X(p,q)$ in $X$ since $\Lambda$ has determinant $3n^2$ while the lattice $n\Z^2$ has determinant $n^2$.
It is easy to check, that these are exactly the points $v_1, v_2, v_3$ given above.
\end{proof}

With Theorem \ref{thm:certificates} and Lemma \ref{lem:finite_cone_distribution} as well as the fact that $\F(G)^\diamond = \F(G)$, it easily follows:

\begin{corollary} \label{cor:certificates_work}
Let $0\leq p_i\leq q_i <n$ for $i=1,\dots,r$ and 
$$\F' = \{f\in\F \ : \ f\equiv (p_i,q_i) \text{ for some } i=1,\dots,r\}^\diamond.$$
If, for every $i=1,\dots,r$ and every $k=1, 2 ,3$ we have a certificate with respect to the group $G$ and the vector $(p_i,q_i) + v_k(p_i,q_i)$ (as in Lemma \ref{lem:finite_cone_distribution}), then $\F'\subset\F(G)$.
\end{corollary}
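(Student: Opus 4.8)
The plan is to derive Corollary~\ref{cor:certificates_work} directly by combining the three ingredients the statement names: Theorem~\ref{thm:certificates}, Lemma~\ref{lem:finite_cone_distribution}, and the symmetry $\F(G)^\diamond = \F(G)$. First I would fix an index $i\in\{1,\dots,r\}$ and abbreviate $(p,q) = (p_i,q_i)$, so that $0\le p\le q < n$. By hypothesis we have a certificate for each of the three vectors $(p,q)+v_k(p,q)$, $k=1,2,3$. Applying Theorem~\ref{thm:certificates} to each of these three vectors yields
\begin{equation*}
\bigl((p,q)+v_k(p,q)\bigr) + n\Cf \subset \F(G) \qquad \text{for } k=1,2,3.
\end{equation*}
Taking the union over $k$ and using the description from Lemma~\ref{lem:finite_cone_distribution}, namely $X(p,q) = (p,q) + \{v_1,v_2,v_3\} + n\Cf$, this is exactly the statement $X(p,q)\subset\F(G)$, where $X(p,q) = \{\f\in\F : \f\equiv(p,q)\bmod n\}$.

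Next I would pass to the $\diamond$-closure. Since $\F(G) = \F(G)^\diamond$, from $X(p,q)\subset\F(G)$ we immediately get $X(p,q)^\diamond\subset\F(G)$ as well; note $X(p,q)^\diamond = \{\f\in\F : \f\equiv(p,q)\text{ or }\f\equiv(q,p)\bmod n\}$, using that $\F$ itself is $\diamond$-invariant so the ambient condition $\f\in\F$ is unaffected. Finally I would take the union over all $i=1,\dots,r$:
\begin{equation*}
\F' = \bigcup_{i=1}^{r} X(p_i,q_i)^\diamond \subset \F(G),
\end{equation*}
which is precisely the claimed inclusion $\F'\subset\F(G)$.

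There is really no substantial obstacle here: the corollary is a bookkeeping assembly of results already proved, so the only thing to be careful about is matching notation — confirming that the set $\F'$ as defined in the corollary coincides with $\bigcup_i X(p_i,q_i)^\diamond$, and that the three vectors $v_k(p_i,q_i)$ fed into the certificates are exactly the ones appearing in Lemma~\ref{lem:finite_cone_distribution}. If I had to name a ``hard part,'' it is only the implicit check that every $\f$-vector in a given residue class modulo $n$ is reached: this is not reproved here but is delegated entirely to Lemma~\ref{lem:finite_cone_distribution}, whose proof already accounts for the three fundamental-domain representatives via the determinant computation $\det\Lambda = 3n^2$ versus $\det(n\Z^2) = n^2$. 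So the corollary follows in a couple of lines once those lemmas are in hand.
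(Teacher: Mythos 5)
Your proposal is correct and follows exactly the route the paper intends: the paper states this corollary with no written proof beyond the remark that it "easily follows" from Theorem~\ref{thm:certificates}, Lemma~\ref{lem:finite_cone_distribution}, and the $\diamond$-invariance of $\F(G)$, and your assembly of those three ingredients is precisely that argument. No gaps.
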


Consequently, to show that $\F(G)$ contains a set $\F'$ of the form as in  Lemma \ref{lem:f_vectors_mod_n} it suffices to state a list of certificates.

\section{Constructions of symmetric polytopes}\label{sec:constructions}

It is easy to construct polytopes symmetric under $G$ by taking the convex hull of several orbits. However, in most cases it is hard to control the $f$-vector of the resulting polytope. 
In this section, we describe some special operations that can be applied to $G$-symmetric polytopes to obtain other $G$-symmetric polytopes. These operations will be used in Section \ref{sec:characterization} to construct symmetric polytopes needed for certificates. We will emphasize the implications for the $f$-vector and the types of the polytope. The first operation we discuss is to stack smaller copies of a facet of a polytope on that facet to get a kind of narrowed prism.
\begin{figure}[h]
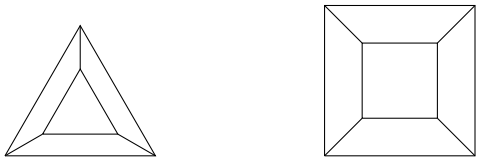

\RPpic
\caption{Projection of the regular prism operation over a $3$-gon and a $4$-gon}\label{fig:RP}
\end{figure}

\begin{lemma}[$k$-gon prism]\label{lem:RP}
Let $F$ be a face of $P$ with degree $k$. Let $H$ be the stabilizer of $F$. Then there exists a $G$-symmetric polytope $P'$ with
\begin{equation*}
f(P') = f(P) + \frac{|G|}{|H|}(k,k).
\end{equation*}
Furthermore, $P'$ is {\rt} and has a facet of degree $k$ with stabilizer $H$.
If $P$ is {\lt} then $P'$ is base.
We denote $P' = \RP_{k,|G|/|H|}(P)$ and call it \emph{regular prisms} or \emph{prisms} over $F$.
\end{lemma}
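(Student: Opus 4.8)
The plan is to construct $P'$ explicitly by stacking a scaled copy of the facet $F$ (together with the whole orbit $GF$) slightly above $F$, so that the picture over each facet in $GF$ looks like the projections in Figure~\ref{fig:RP}: the $k$ vertices of $F$ get connected to the $k$ vertices of a smaller parallel copy $F''$, creating a ``drum'' with $k$ new lateral quadrilateral facets and one new top facet of degree $k$, while the original facet $F$ disappears.

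First I would invoke Lemma~\ref{lem:small_disc} applied to the facet $F$ with stabilizer $H$: it yields an $H$-symmetric disc $D$ in a parallel hyperplane $S'$ slightly outside $P$, whose center is fixed by $H$, such that $D$ does not see $((G\cdot D)\backslash D)\cup(P\backslash F)$. Instead of taking just the center, I take $w_1,\dots,w_k$ to be the radial projections onto $\bd D$ of the $k$ vertices of $F$ (i.e. scaled copies of the vertices of $F$, pushed out to the hyperplane $S'$ and rescaled to lie on $\partial D$); for $\eps$ small enough this scaled copy $F''=\conv\{w_1,\dots,w_k\}$ is contained in $D$, is $H$-symmetric, and the set $W=\{w_1,\dots,w_k\}$ inherits from $D$ the property of not seeing $((G\cdot D)\backslash D)\cup(P\backslash F)$. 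Set $P'=\conv(P\cup G\cdot W)$. Since $D$ is $H$-symmetric and $H$ is exactly the stabilizer of $F$, the orbit $G\cdot W$ consists of $\frac{|G|}{|H|}$ such scaled copies, one sitting just outside each facet of $GF$, and $\conv\{H\cdot W\}=F''$ has stabilizer $H$ and degree $k$.

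Next I would do the face count. Because $G\cdot W$ does not see anything of $P$ except $F$ (and the other copies do not see each other), the only new edges are: the $k$ edges of each copy $F''$ of the scaled facet, and the $k$ ``vertical'' edges joining each vertex of $F$ to the corresponding vertex of $F''$; the old facet $F$ is destroyed and replaced by $k$ quadrilateral lateral facets plus the one top facet $F''$. Doing this simultaneously over the whole orbit $GF$ of size $\frac{|G|}{|H|}$ gives $f_0(P')=f_0(P)+k\cdot\frac{|G|}{|H|}$ and $f_2(P')=f_2(P)-\frac{|G|}{|H|}+(k+1)\cdot\frac{|G|}{|H|}=f_2(P)+k\cdot\frac{|G|}{|H|}$, i.e. $f(P')=f(P)+\frac{|G|}{|H|}(k,k)$ as claimed (one can double-check consistency via Euler: $f_1$ gains $2k\cdot\frac{|G|}{|H|}$). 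For the type claims: every newly created vertex $w_i$ has degree $3$ (two neighbors along $\partial F''$ and one vertical edge) and lies in an orbit with trivial stabilizer since $W$ meets the interior of no rotation/reflection-fixed set — so $P'$ is a {\rt} polytope, and the top facet $F''$ has stabilizer $H$ and degree $k$. If moreover $P$ is {\lt}, then $P$ has a simplicial facet with trivial stabilizer; this facet is untouched by the construction (it is not in $GF$ unless it equals $F$, but $F$ has degree $k$ and, when $P$ is {\lt} with $k\ne 3$, we may simply choose the base polytope's simplicial facet to be a different one — or if $k=3$ the lateral quadrilaterals still leave $P'$ with a simple vertex and a simplicial facet only after an extra remark), so $P'$ retains a simplicial facet with trivial stabilizer in addition to the simple vertices, hence $P'$ is a base polytope.

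The main obstacle I anticipate is the careful verification that adding the full orbit $G\cdot W$ (rather than a single scaled facet) introduces no unexpected edges or facets — in particular that distinct copies $A\cdot W$ and $A'\cdot W$ for $A,A'\in G$ do not ``see'' each other and that each $w_i$ really sees only the two adjacent $w_j$'s and the one vertex of $F$ directly below it. This is exactly the kind of statement Lemma~\ref{lem:small_disc} is designed to handle, so the argument reduces to choosing the scaling parameter $\eps$ (the radius) and the height $\delta$ small enough, using the continuity/positivity argument in the proof of that lemma applied to each vertex of $P\setminus F$, each copy of the orbit, and each pair of consecutive $w_i$'s. A secondary bookkeeping point is to state the type claim precisely when $k=3$, since then the lateral facets are quadrilaterals but $F''$ is a triangle; I would handle this by noting the {\rt} property only needs simple vertices with trivial stabilizer, which holds regardless of $k$.
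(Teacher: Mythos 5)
Your proposal is correct and follows essentially the same route as the paper: invoke Lemma~\ref{lem:small_disc}, place a small parallel copy of $F$ inside the disc, take the convex hull with the whole orbit, and count $k$ new vertices and $k+1$ new facets (minus $F$) per orbit element. The paper resolves your hedging about the left-type case exactly as you suspect: either the simplicial facet with trivial stabilizer lies outside the orbit of $F$ and survives untouched, or $F$ itself is that facet, in which case $H$ is trivial and the new top copy $F'$ is again a simplicial facet with trivial stabilizer.
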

\begin{proof}
We choose $F'$ to be a small copy of $F$ contained in a small disc over $F$ chosen as in Lemma \ref{lem:small_disc}. Define $P' = \conv(P\cup G\cdot F')$. $P'$ has $k$ additional vertices over $F$, namely the vertices of $F'$. Furthermore, $P'$ contains $k+1$ additional facets over $F$, one quadrilateral facet between any edge of $F$ and its counterpart in $F'$ as well as $F'$ itself, while $F$ is no face of $P'$.  Since the same argument holds for all the $\frac{|G|}{|H|}$ elements in the orbit of $F$, we get 
$$f(P') = f(P) + \frac{|G|}{|H|}(k,k)$$
as desired. The vertices of $F'$ are simple with trivial stabilizer, thus $P'$ is {\rt}. If $P$ has a simplicial facet with trivial stabilizer which is not contained in the orbit of $F$, then this facet is also a facet of $P'$. If, on the other hand, $F$ is a simplicial facet with trivial stabilizer, then so is $F'$. In any case, if $P$ is {\lt} then $P'$ is also {\lt} and thus base.
\end{proof}

By Lemma \ref{lem:small_disc} we can apply any operation over a two dimensional polytope to a full orbit of facets without unexpected edges and facets. In the following we state a few such operations. The missing proofs are all analougusly to the proof of Lemma \ref{lem:RP}.

For example, we can also stack a smaller rotated copy of a facet over itself to get the following:

\begin{figure}[h]
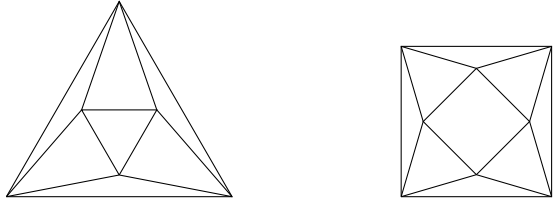

\TPpic
\caption{Projection of the twisted prism operation over a $3$-gon and a $4$-gon}
\label{fig:TP}
\end{figure}

\begin{lemma}[Twisted prism]
Let $\F$ be a face of $P$ with $k\geq 3$ vertices and let $H$ be the stabilizer of $F$. Then there exists a $G$-symmetric polytope $P'$ with symmetry group $G$ and
\begin{equation*}
f(P')=f(P)+\frac{|G|}{|H|}(k,2k).
\end{equation*}
Additionally, $P'$ has simplicial facets with trivial stabilizer and is thus {\lt}.
We denote $P'=\TP_{k,|G|/|H|}(P)$ and call it \emph{twisted prisms over $F$}.
\end{lemma}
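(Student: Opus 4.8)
The plan is to mimic the proof of Lemma~\ref{lem:RP} almost verbatim, replacing the ``straight'' small copy of $F$ by a rotated small copy. First I would invoke Lemma~\ref{lem:small_disc} applied to the face $F$ with stabilizer $H$: this produces an $H$-symmetric disc $D$ in a parallel hyperplane slightly above $F$ whose center is fixed by $H$ and which does not see $((G\cdot D)\setminus D)\cup(P\setminus F)$. Inside $D$ I place a copy $F'$ of $F$ scaled down and rotated by the angle $\pi/k$ about the common axis through the $H$-fixed center (this is the ``twist''); since $H$ acts on $F$ by a cyclic/dihedral action fixing that axis, and the full rotation by $\pi/k$ composed with an order-$k$ rotation is again in the symmetry of the configuration, $F'$ is still $H$-symmetric, and $G\cdot F'$ consists of $|G|/|H|$ such twisted copies, one over each facet in the orbit $G\cdot F$. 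Then I set $P' = \conv(P\cup G\cdot F')$.

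Next I would count the change in the $f$-vector contributed by a single facet $F$ of the orbit, then multiply by $|G|/|H|$ using the orbit–stabilizer theorem and the ``small disc'' property (which guarantees no unexpected edges or facets appear, and that the copies over distinct facets of the orbit do not interact). Over $F$ we gain the $k$ vertices of $F'$; the facet $F$ itself is destroyed; and the ``gap'' between the boundary of $F$ and the rotated boundary of $F'$ is triangulated by the $2k$ edges joining each vertex of $F$ to the two nearest vertices of $F'$, yielding $2k$ triangular side facets, plus the top facet $F'$, so $2k+1$ new facets replacing the one old facet $F$. Summing over the orbit gives the net change $\frac{|G|}{|H|}(k,\,2k)$, as claimed. (One can also sanity-check this against the operation being the composition ``$\RP$ then careful stacking on the new top facet'', i.e. $(k,k)+(1,k-1)=(k+1,2k-1)$ — hmm, that does not match, so the twisted prism is genuinely a different, one-step construction and the direct face count above is the right argument.)

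For the type statements: every side facet produced is a triangle, and each is the convex hull of one vertex of $F$ and one edge of $F'$, hence has trivial stabilizer (any nontrivial element of the stabilizer of such a triangle would have to fix the axis and permute its three vertices, which is impossible since two of them lie on $F$ and one on the strictly smaller $F'$ at a different height). If instead $F$ itself was a simplicial facet with trivial stabilizer, then $F'$ is too. In all cases $P'$ has simplicial facets with trivial stabilizer, so $P'$ is of left type by Definition~\ref{def:left_right_type}. Finally I would record that $G\cdot F'=P'$ by construction, so $P'$ is $G$-symmetric with the same group $G$.

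The main obstacle I expect is purely combinatorial bookkeeping: verifying that the region between $\partial F$ and the rotated $\partial F'$ is triangulated into exactly $2k$ triangles with no quadrilateral or other faces sneaking in, and that stacking the twisted copy does not accidentally make some of the old edges of $F$ non-edges or merge coplanar triangles. This is handled by choosing the disc (hence $F'$) small enough via Lemma~\ref{lem:small_disc} — the ``sees'' condition ensures each new facet is a genuine facet and no two adjacent side triangles are coplanar — but it requires a careful, if routine, local analysis near each edge and vertex of $F$. Everything else (the orbit count, the type claims, $G$-symmetry) is immediate once this local picture is pinned down, exactly as in the proof of Lemma~\ref{lem:RP}.
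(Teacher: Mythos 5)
Your proposal is correct and matches the paper's intent exactly: the paper omits this proof, stating only that it is analogous to the regular prism construction of Lemma~\ref{lem:RP}, and your argument (small twisted copy $F'$ placed via Lemma~\ref{lem:small_disc}, yielding $k$ new vertices and $2k$ triangles plus $F'$ in place of $F$, summed over the orbit) is precisely that analogue. The only point stated a little loosely is the trivial stabilizer of a side triangle --- one should note that the stabilizer of such a triangle lies in $H$, which acts on the plane of $F$ by rotations about its center, so no nontrivial element can fix the triangle --- but this is routine and does not affect correctness.
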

%
%\begin{proof}
%Let $F'$ be the convex hull of the midpoints of the edges of $F$. Then $F'$ is $H$-symmetric.
%By Lemma~\ref{lem:small_disc} we can choose a small, translated copy $F''$ of $F'$, such that $F''$ sees (w.r.t $P$) no vertices in the polytope $P'=\conv(P\cup (G\cdot F''))$ except for the ones of $F$ and $F''$. Then  the additional facets of  $P'$ over $F$ are exactly one triangular facet for each of the $d$ edges of $F''$ and one triangular facet for each of the $d$ edges of $F$ and $F''$ itself, while $F$ is not a face of $P'$. Since the same argument holds for all of the $\frac{|G|}{|H|}$ elements in the orbit of $F''$, we get $f(P)=f(P)+\frac{|G|}{|H|}(k,2k)$ as desired.
%\end{proof}

Instead of a small copy of itself, one can also stack other facets over a given facet. It can yield interesting transitions of the $f$-vector when the appended faces are not in a general position, lets say some edges are parallel to the edges of the given face. The following Lemma illustrates the example of stacking a $2k$-gon regularly on a $k$-gon.

\begin{figure}[h]
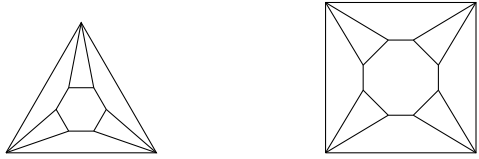

\BPpic
\caption{Orthogonal projection of the big prism operation over a $3$-gon and a $4$-gon}\label{fig:BP}
\end{figure}

\begin{lemma}[2k-gon on k-gon]
Let $F$ be a face of $P$ of degree $k$ and $H$ be the stabilizer of $F$. 
Then there exists a $G$-symmetric polytope $P'$ with
$$f(P') = f(P) + \frac{|G|}{|H|}(2k,2k).$$
Furthermore, $P'$ is base and has a facet of degree $2k$ with stabilizer $H$.
We denote $P' = \BP_{2k, |G|/|H|}(P)$ and call it \emph{big prisms} over $F$.
\end{lemma}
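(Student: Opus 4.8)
The plan is to mimic the proof of Lemma~\ref{lem:RP}, since the ``big prism'' operation is structurally the same kind of stacking construction: over each facet in the orbit $GF$ we glue on a small copy of a larger polygon, and we only need to track how the $f$-vector changes and which new faces acquire trivial stabilizer. First I would invoke Lemma~\ref{lem:small_disc} applied to the face $F$ with stabilizer $H$, obtaining an $H$-symmetric disc $D$ in a parallel hyperplane slightly outside $P$, whose center is fixed by $H$, and which does not see $((G\cdot D)\setminus D)\cup(P\setminus F)$. Inside $D$ I would place an $H$-symmetric regular $2k$-gon $F'$ positioned so that each of the $k$ edges of $F$ ``sees'' exactly one edge of $F'$ and each vertex of $F$ sees exactly two consecutive edges of $F'$ (this is the ``regular'' stacking depicted in Figure~\ref{fig:BP}: the $2k$-gon is oriented so its edges alternate between being parallel-facing the edges of $F$ and facing the vertices of $F$). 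Then I would set $P' = \conv(P\cup G\cdot F')$, which is manifestly $G$-symmetric.

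Next I would do the face count over a single facet $F$ and then multiply by $|GF| = |G|/|H|$ using the orbit–stabilizer theorem, exactly as in Lemma~\ref{lem:RP}. The new vertices are the $2k$ vertices of $F'$, contributing $2k$ to $f_0$ per orbit representative. For the facets: $F$ itself is destroyed; $F'$ becomes a new facet; over each edge $e$ of $F$ there is one quadrilateral facet joining $e$ to the edge of $F'$ it sees; and over each vertex $v$ of $F$ there is one triangular facet joining $v$ to the edge of $F'$ ``between'' the two quadrilaterals at $v$. That gives $1 + k + k = 2k+1$ new facets and one lost facet, a net change of $2k$ in $f_2$ per orbit representative. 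By Lemma~\ref{lem:small_disc}, no unexpected edges or facets appear (the disc, hence $F'$, does not see the rest of $P$ nor the other translates), so $f(P') = f(P) + \tfrac{|G|}{|H|}(2k,2k)$. The stabilizer of $F'$ is $H$ by construction, and $F'$ has degree $2k$, giving the stated facet.

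For the ``base'' claim I would exhibit both a simplicial facet and a simple vertex, each with trivial stabilizer. The triangular facets over the vertices of $F$ are simplicial, and a generic such triangle has trivial stabilizer: the only elements of $G$ that could fix it lie in $H$ (they must preserve the orbit of $F$ and the disc over it), and within $H$ a nontrivial element moves $F'$'s vertices around, so no nontrivial element fixes a single one of these triangular facets — here I would be a little careful, choosing $F'$'s rotational offset generically so that none of these small triangles is itself $H$-invariant (which only fails in degenerate low-order cases, easily avoided). Dually, a vertex of $F'$ has degree three (it meets one quadrilateral on each side and one triangle), and its stabilizer inside $H$ is trivial for the same genericity reason. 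Hence $P'$ satisfies both base-polytope conditions, so $P'$ is base. I would close by recording the notation $P' = \BP_{2k,|G|/|H|}(P)$.

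The main obstacle I anticipate is the stabilizer bookkeeping for the new triangular facets and the new vertices: one must verify that these faces genuinely have \emph{trivial} stabilizer rather than a nontrivial subgroup of $H$, and that the combinatorial picture (which edge of the $2k$-gon faces which edge versus which vertex of $F$) is realized by an actual affine placement. This is where the phrase ``regularly on a $k$-gon'' in the statement does real work, and where Lemma~\ref{lem:small_disc} has to be applied not just to $F$ but in a way compatible with the chosen orientation of $F'$; the $f$-vector arithmetic itself is routine once the combinatorics of the gluing is pinned down.
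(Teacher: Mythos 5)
Your proposal matches the paper's approach: the paper gives no separate proof of this lemma, stating only that it is analogous to Lemma \ref{lem:RP} (a small disc via Lemma \ref{lem:small_disc}, the convex hull of $P$ with the orbit of the stacked $2k$-gon, and the count of $2k$ new vertices and $2k+1$ new facets replacing the one lost facet per orbit element), which is exactly what you carry out, including the base-type bookkeeping. Two minor slips that do not affect the conclusion: the rotational offset of $F'$ is not free to be chosen generically (it is pinned by the requirement that the edges of $F'$ alternate between facing edges and vertices of $F$; triviality of the triangles' stabilizers instead follows because a nontrivial element of $H$, being a rotation about the normal axis in all cases the paper uses, fixes no vertex of $F$), and each vertex of $F'$ meets one quadrilateral, one triangle and $F'$ itself (rather than a quadrilateral on each side plus a triangle), which is what yields degree three.
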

%\begin{proof}
%Consider $F'$ to be $F$ with vertices symmetrically cut of $F$ (the two dimensional analog of $\CC$). Now let $F''$ be a small translated copy of $F'$ contained in the small disc from Lemma \ref{lem:small_disc}. Set $P' = \conv(P\cup G\cdot F'')$. There are quadrilateral faces over every second edge of $F'$ and triangular faces over the other edges. Since this is true for all $|G|/|H|$ facets in the orbit of $F''$ we get
%$f(P') = f(P) + \frac{|G|}{|H|}(2k,2k)$ as desired.
%The triangular facets have trivial stabilizer while the vertices of $F''$ are simple with trivial stabilizer. Hence, $P'$ is base.
%\end{proof}

Similar to the above construction we may also stack a $k$-gon on a $2k$-gon in a regular manner if the stabilizer allows to do so.

\begin{figure}[h]
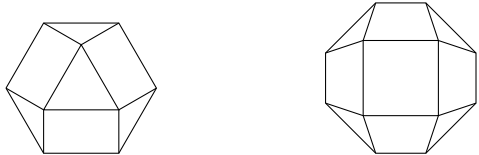

\HPpic
\caption{Projection of the half prism operation over a $6$-gon and an $8$-gon}\label{fig:HP}
\end{figure}

\begin{lemma}[$k$-gon on $2k$-gon] \label{lem:HP}
Let $F$ be a face of $P$ with $2k$ vertices for some $k\in \Z_{\geq3}$. Let $H$ be the stabilizer of $F$ and suppose that $|H|$ divides $k$ and $H$ does not contain any reflections. Then there exists a $G$-symmetric polytope $P'$ with
\begin{equation*}
f(P')=f(P)+\frac{|G|}{|H|}(k,2k).
\end{equation*}
Furthermore, $P'$ is {\lt}.

We denote $P'=\HP_{2k,|G|/|H|}(P)$ and call it \emph{half prisms over $F$}.
\end{lemma}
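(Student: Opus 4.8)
The plan is to mimic the proof of Lemma~\ref{lem:RP} (the regular $k$-gon prism) exactly, since Lemma~\ref{lem:HP} is of the same flavour: we stack one rotated $2$-dimensional polygon over a facet $F$ of degree $2k$, and we simply need to recount the resulting vertices and facets. First I would invoke Lemma~\ref{lem:small_disc} with the face $F$ and its stabilizer $H$ to obtain an $H$-symmetric disc $D$ over $F$, contained in a parallel hyperplane $S'$ slightly outside $P$, whose center is fixed by $H$ and which does not see $((G\cdot D)\backslash D)\cup(P\backslash F)$. Inside $D$ I would inscribe a regular $k$-gon $F'$ whose $k$ vertices are positioned so that $F'$ is $H$-symmetric: this is exactly the place where the hypotheses ``$|H|$ divides $k$'' and ``$H$ contains no reflections'' are used — a rotation group of order dividing $k$ (acting by rotations about the axis through the fixed center, perpendicular to $D$) can be made to act on a regular $k$-gon, and the rotated copy is chosen so that each vertex of $F'$ lies ``between'' two vertices of $F$ in angular position, i.e. the $2k$ edges of $F$ split into $k$ pairs, each pair spanned by one vertex of $F'$. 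Then set $P' = \conv(P\cup G\cdot F')$; by the small-disc property all new edges of $P'$ incident to $F'$ go only to vertices of $F$ (and to each other within $F'$), so no unexpected faces appear, and the same holds for every facet in the orbit $G\cdot F$.

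Next I would do the face count over a single representative facet $F$ and then multiply by $|G\cdot F| = |G|/|H|$ by the orbit–stabilizer theorem (using that distinct facets in the orbit $G\cdot F$ contribute disjoint new faces, again by the small-disc property). Over $F$ we gain the $k$ vertices of $F'$. For facets: $F$ itself is destroyed; $F'$ becomes a new facet; and the region between $F$ and $F'$ is triangulated by the ``bridge'' faces. Because each vertex $v$ of $F'$ sits over the midpoint of an edge $e_v$ of $F$, the faces joining $F$ to $F'$ are $k$ triangles of the form $\conv(e_v, v)$ together with $k$ triangles of the form $\conv(u, v, v')$ where $u$ is a vertex of $F$ lying between two consecutive vertices $v, v'$ of $F'$ — that is $2k$ new triangular facets in the bridge region. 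So over one facet we lose $1$ and gain $1 + 2k$, a net of $+2k$ facets, and we gain $k$ vertices; hence
\[
f(P') = f(P) + \frac{|G|}{|H|}(k, 2k),
\]
as claimed. (A sanity check against Figure~\ref{fig:HP}: an $8$-gon receiving a $4$-gon gains $4$ vertices and $8$ facets after removing the octagonal facet, matching $(k,2k)=(4,8)$.)

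Finally I would verify the type assertion. The $2k$ new triangular bridge facets incident to $F'$ — in particular the $k$ triangles $\conv(u,v,v')$ — are simplicial, and for a generic choice of $F'$ inside the disc (i.e. generic radius and rotation angle compatible with the $H$-symmetry) their stabilizer in $G$ is trivial: the only symmetries that could fix such a triangle would have to lie in $H$ and fix a non-central region of $F'$, which a nontrivial rotation of the $k$-gon cannot do. Thus $P'$ has simplicial facets with trivial stabilizer, i.e. $P'$ is a \lt{} polytope, which is exactly the stated conclusion. I do not expect to claim anything about simple vertices here, so unlike in Lemma~\ref{lem:RP} there is no ``base'' upgrade to state.

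The main obstacle is the bookkeeping of the bridge region: one must be careful that the $2k$ edges of $F$ pair up correctly against the $k$ vertices of $F'$ under the $H$-action, so that no vertex of $F$ ends up radially aligned with a vertex of $F'$ (which would merge faces or create a non-simplicial facet) and so that the triangulation of the bridge really has exactly $2k$ triangles and no quadrilaterals — this is precisely what the divisibility and ``no reflections in $H$'' hypotheses guarantee, and it is where I would spend the care in a full write-up. Everything else (the disc existence, the orbit count, the absence of unexpected faces) is already packaged in Lemma~\ref{lem:small_disc} and mirrors the proof of Lemma~\ref{lem:RP} verbatim.
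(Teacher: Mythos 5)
Your overall strategy is exactly the one the paper intends (the paper omits this proof, saying it is analogous to Lemma~\ref{lem:RP}, and your use of Lemma~\ref{lem:small_disc} plus an orbit count is right), and your final arithmetic $f(P')-f(P)=\frac{|G|}{|H|}(k,2k)$ is correct. But your description of the bridge region is combinatorially impossible, and your stated ``main obstacle'' is exactly backwards. If each of the $k$ vertices of $F'$ sits over the midpoint of an edge of the $2k$-gon $F$, those $k$ spanned edges are alternating edges of $F$ and already use up all $2k$ vertices of $F$; between two angularly consecutive vertices $v,v'$ of $F'$ there lie \emph{two} vertices of $F$ joined by one of the remaining $k$ edges of $F$, not a single vertex $u$. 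So the face of $P'$ containing the edge $vv'$ of $F'$ cannot be a triangle $\conv(u,v,v')$: to absorb that leftover edge of $F$ it must be the quadrilateral spanned by $vv'$ and the parallel edge of $F$ beneath it. The correct bridge therefore consists of $k$ triangles $\conv(e_v,v)$ and $k$ quadrilaterals, which is what Figure~\ref{fig:HP} shows, and which still gives $2k$ bridge facets and hence the claimed count. A quick edge count shows why an all-triangle bridge is impossible here: the $3k$ boundary edges of $F$ and $F'$ would force $3k$ triangular bridge facets, which is precisely the \emph{twisted} half prism $\THP$ with increment $(k,3k)$, not $(k,2k)$. So the condition you say one must verify --- ``the triangulation of the bridge really has exactly $2k$ triangles and no quadrilaterals'' --- is the negation of what the construction requires; the parallelism of the edges of $F'$ with alternating edges of $F$ (guaranteed by choosing $F'$ as an $H$-symmetric regular $k$-gon aligned with $F$, using that $|H|$ divides $k$ and $H$ contains no reflections) is the whole point of $\HP$ as opposed to $\THP$.

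The remainder of your argument survives this correction: the $k$ genuine triangles $\conv(e_v,v)$ are simplicial, and a nontrivial rotation in $H$ moves every vertex of $F'$ (a nonidentity planar rotation fixes only the center), so these triangles have trivial stabilizer and $P'$ is indeed of left type. With the bridge description fixed as above, the proof is complete and matches the paper's intended construction.
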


The constructions seen before share a strong 'regularity'. The stabilizer of a facet implies a symmetry for all facets we stack over it. So it is a reasonable question if we are able to stack something without such 'regularity' (Say, there are no parallel edges). At least if the stabilizer only contains rotations, this is the case. This is illustrated by the following result.

\begin{figure}[h]
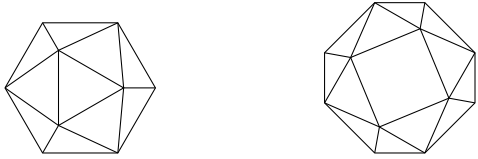

\THPpic
\caption{Projection of the twisted half prism operation over a $6$-gon and an $8$-gon}\label{fig:THP}
\end{figure}
\begin{remark}
Under the conditions of Lemma~\ref{lem:HP}, there exists a $G$-symmetric polytope $P'$ with
\begin{align*}
f(P')=f(P)+\frac{|G|}{|H|}(k,3k).
\end{align*}
Furthermore, $P'$ is {\lt}.

We denote $P'=\THP_{2k,|G|/|H|}(P)$ and call it \emph{twisted half prisms over $F$}. This can be achieved by slightly rotating $F''$ in the proof of Lemma~\ref{lem:HP}, such that the edges of $F''$ are not parallel to the ones of $F$.
\end{remark}

%quadrilateral faces between those parallel edges and triangular faces between the other edges of $F$ and the vertices of $F'$ and all their orbits.
%the line between $v$ and $w$ intersects $P$, for all vertices $v\in F'$ and all points $w$ with $w=Av'$ for $v\in F'$ and $A \in \G\backslash H$.

%If we just want to stack a twisted copy of a face upon itself, we may also allow reflections.

We summarize the constructions given in this section in the following list:

\begin{corollary}\label{cor:operations}
For a $G$ symmetric polytope $P$, we have the following operations on $P$ ($P'$ denotes the polytope obtained by the operation):

\hspace*{-16pt}
\begin{tabular}{llll}
name & conditions on $P$ & type &  $f(P')-f(P)$\\
$\CS_{k,m}$ & $\exists$ facet $F$, $\deg(F) =k$, $|G_F| = n/m$ & {\lt} & $m(1,k-1)$\\
$\CC_{k,m}$ & $\exists$ vertex $v$, $\deg(v) = k$, $|G_v| = n/m$ & {\rt} & $m(k-1,1)$\\
$\RP_{k,m}$ & $\exists$ facet $F$, $\deg(F) = k$, $G_F = n/m$ & {\rt} & $m(k,k)$\\
$\RP^\vee_{k,m}$ & $\exists$ vertex $v$, $\deg(v) = k$, $G_v = n/m$ & {\lt} & $m(k,k)$\\
$\TP_{k,m}$ & $\exists$ facet $F$, $\deg(F) = k$, $|G_F| = n/m$ & {\lt} & $m(k,2k)$\\
$\TP^\vee_{k,m}$ & $\exists$ vertex $v$, $\deg(v) = k$, $|G_v| = n/m$& {\rt} & $m(2k,k)$\\
$\HP_{2k,m}$ & $\exists$ facet $F$, $\deg(F) = 2k$, $|G_F| = n/m$ & {\lt} & $m(k,2k)$\\
%$\HP^\vee_{2k,m}$ & $\exists$ vertex $v$, $\deg(v) = 2k$, $|G_F| = n/m$ & {\rt} & $m(k,2k)$\\
$\THP_{2k,m}$ & $\exists$ facet $F$, $\deg(F) = 2k$, $|G_F| = n/m$ & {\lt} & $m(k,3k)$\\
%$\THP^\vee_{2k,m}$ & $\exists$ vertex $v$, $\deg(v) = 2k$, $|G_F| = n/m$ & {\rt} & $m(3k,k)$\\
$\BP_{k,m}$ & $\exists$ facet $F$, $\deg(F) = k/2$, $|G_F| = n/m$ & {\bt} & $m(k,k)$\\
%$\BP^\vee_{k,m}$ & $\exists$ vertex $v$, $\deg(v) = k/2$, $|G_v| = n/m$ & {\bt} & $m(k,k)$\\
\end{tabular}
\end{corollary}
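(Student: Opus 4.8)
The statement is essentially a bookkeeping summary: each row of the table is obtained from one of the lemmas (or the accompanying remarks) established in this section, together with the dual-operation principle already used to pass from Lemma~\ref{lem:stacking} to the careful-cutting remark. The plan is to verify the rows in two groups.

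First, the rows $\CS_{k,m}$, $\RP_{k,m}$, $\TP_{k,m}$, $\HP_{2k,m}$, $\THP_{2k,m}$ and $\BP_{k,m}$ are restatements of Lemma~\ref{lem:stacking} (with $m=|G|/|H|$), Lemma~\ref{lem:RP}, the twisted prism lemma, Lemma~\ref{lem:HP}, the twisted half prism remark, and the $2k$-gon-on-$k$-gon lemma, respectively. For each of these one only has to match the notation $n/m = |H| = |G_F|$ and read off the asserted $f$-vector change and type directly from the corresponding proof. In the $\BP$ row the listed face degree $k/2$ reflects that the operation is indexed by the degree-$k$ facet it \emph{produces}, not by the degree-$(k/2)$ facet it acts on; the $f$-vector change $m(k,k)$ and the type {\bt} are then read off the lemma with its ``$2k$'' renamed to the table's $k$.

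Second, the rows $\CC_{k,m}$, $\RP^\vee_{k,m}$ and $\TP^\vee_{k,m}$ are produced by the dual operation $P\mapsto(\,\delta(P^\vee)\,)^\vee$, exactly as careful cutting was derived from careful stacking. I would first record the duality dictionary: for any $G$-symmetric polytope $P$, the $G$-symmetric dual $P^\vee$ carries an inclusion-reversing bijection of face lattices that intertwines the $G$-action, hence sends a vertex of degree $k$ with stabilizer $H$ to a facet of degree $k$ with stabilizer $H$, sends $f(P)=(f_0,f_2)$ to $(f_2,f_0)$, and interchanges the labels {\lt} and {\rt} (fixing {\bt}). Granting this, applying $\RP_{k,m}$ to $P^\vee$ at a facet corresponding to a degree-$k$ vertex of $P$ changes $f(P^\vee)$ by $m(k,k)$ and yields a {\rt} polytope; dualizing back produces $\RP^\vee_{k,m}(P)$, with $f$-change $m(k,k)$ and type {\lt}. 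The identical argument with $\CS$ and with the twisted prism lemma in place of $\RP$ gives the $\CC$ and $\TP^\vee$ rows, the component swap turning $m(1,k-1)$ into $m(k-1,1)$ and $m(k,2k)$ into $m(2k,k)$.

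The only content beyond transcription is the duality dictionary, and in particular the claim that a simplicial facet with trivial stabilizer of $P^\vee$ corresponds to a simple vertex with trivial stabilizer of $P$. This follows from the standard face-lattice anti-isomorphism of polar duality---a facet of $P^\vee$ is dual to a vertex of $P$, and the facets of $P^\vee$ through that facet are dual to the edges of $P$ through that vertex, so a triangular facet corresponds to a degree-three vertex---combined with the observation (already used in Section~\ref{sec:preliminaries}) that $P^\vee$ may be chosen $G$-symmetric with the duality intertwining the two $G$-actions, so that stabilizers of dual faces coincide. I expect this to be the main point to state carefully; the remainder is a routine reorganization of the section's lemmas into a single table.
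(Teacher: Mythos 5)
Your proposal is correct and follows essentially the same route as the paper, which likewise proves the corollary by declaring it a summary of the section's lemmas and invoking the dual operation $\phi^\vee(P) = (\phi(P^\vee))^\vee$ for the rows $\CC$, $\RP^\vee$ and $\TP^\vee$. Your explicit duality dictionary and the remark about the reindexing in the $\BP$ row are more detailed than the paper's two-line proof, but they add no new ideas beyond what the paper takes for granted.
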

\begin{proof}
This Corollary is basically a summary of this section.
%Note that a polytope is {\lt} if it has a simplicial facet with trivial stabilizer. In this case we obtain a base polytope with the correct $f$ vector by a single $\CS_{3,n}$ operation.
Note that for every operation $\phi$ of the above with certain conditions on $P$, there is a dual operation defined by $\phi^\vee(P) = (\phi(P^\vee))^\vee$. Clearly, the conditions on the argument and the properties of the image are dual to the conditions and properties according to $\phi$.
\end{proof}

%For the constructions of polytopes with certain symmetries we need polytopes to start with  with, in some sense, 'small' $f$-vectors. Some well known examples with, in some sense, 'small' $f$-vectors are the Archimedian solids.

For the construction of polytopes via the above described methods, we need explicit examples of polytopes to start with. These polytopes need to have  'small' $f$-vectors, since the constructions can only increase the number of vertices and facets. In the following, we give a list of well known polytopes that are used in this work.

\begin{notation}\label{not:special_polytopes} The following polytopes are used in this paper without further discussion. They are all part of the Platonic, the Archimedean and the Catalan solids and thus well studied in literature. %For more information and pictures, see for instance \note{references!}. 
Note that this is not a complete list.

\hspace*{-16pt}
\begin{longtable}{llcl}
 \textit{abbr.}  & \textit{name} &  \textit{rotat. symm.}   & \textit{f-vector} \\
 $Tet$ & tetrahedron & $ \T$ & $(4,4)$\\
$TrTet$ & truncated tetrahedron & $ \T$  & $(12,8)$\\
$Oc$ & octahedron & $ \O$ & $(6,8)$\\
 $Cub$ &  cube   & $  \O$ & $(8,6)$\\
$CubOc$ & cuboctahedron & $ \O$ & $ (12,14)$ \\ 
$RDo$ &  rhombic dodecahedron & $ \O$ & $ (14,12)$ \\
$TrCub$ & truncated cube & $ \O$ & $ (24,14)$ \\
 $RCubOc$ & rhombicuboctahedron & $\O$ & $ (24, 26)$ \\
 $SnCub$ & snub cube & $ \O$ & $ (24,38)$\\
 $TrCubOc$ & truncated cuboctahedron & $ \O$ & $ (48,26)$ \\
 $Ico$ & icosahedron & $ \I$ & $ (12,20)$\\
 $ID$ & icosidodecahedron & $ \I$ & $ (30,32)$\\
 $TrI$ & truncated icosahedron & $ \I$ & $ (60,32)$\\
 $RID$ & rhombicosidodecahedron & $ \I$ & $ (60,62)$\\
 $SnDo$ & snub dodecahedron & $ \I$ & $ (60,92)$\\
 $TrID$ & truncated icosidodecahedron & $ \I$ & $ (120,62)$\\

\end{longtable}

%\begin{enumerate}
%\item $Tet$ tetrahedron, symmetric under $\C_2, \Dih_2, \T$. $f(Tet) = (4,4)$.
%\item $Oc$ octahedron, symmetric under $\C_2, \Dih_2, \T, \O$. $f(Oc) = (6,8)$.
%\item $Cub$ cube, dual of octahedron
%\item $Ico$ icosahedron, symmetric under $\C_2, \Dih_2, \T, \I$. $f(Ico) = (12,20)$.
%\item $TrTet$ truncated tetrahedron, symmetric under $\C_2, \Dih_2, \T$. $f(TrTet) = (12,8)$.
%\item $CubOc$ cuboctahedron, symmetric under $\C_2, \Dih_2, \T, \O$. $f(CubOc) = (12,14)$.
%\item $TrCub$ truncated cube, symmetric under $\C_2, \Dih_2, \T, \O$. $f(TrCub) = (24,14)$.
%\item $RCubOc$ rhombicuboctahedron, symmetric under $\C_2, \Dih_2, \T, \O$. $f(RCubOc) = (24, 26)$.
%\item $TrCubOc$ truncated cuboctahedron, symmetric under $\C_2, \Dih_2, \T, \O$. $f(TrCubOc) = (48,26)$.
%\item $SnCub$ snub cube, symmetric under $\C_2, \Dih_2, \T, \O$. $f(SnCub) = (24,38)$.
%\item $ID$ icosidodecahedron, symmetric under $\C_2, \Dih_2, \T, \I$. $f(ID) = (30,32)$.
%\item $TrI$ truncated icosahedron, symmetric under $\C_2, \Dih_2, \T, \I$. $f(TrI) = (60,32)$.
%\item $RID$ rhombicosidodecahedron, symmetric under $\C_2, \Dih_2, \T, \I$. $f(TrI) = (60,62)$.
%\item $TrID$ truncated icosidodecahedron, symmetric under $\C_2, \Dih_2, \T, \I$. $f(TrID) = (120,62)$
%\item $SnDo$ snub dodecahedron, symmetric under $\C_2, \Dih_2, \T, \I$. $f(SnDo) = (60,92)$
%\end{enumerate}

\end{notation}

\section{Characterization of $f$-vectors}\label{sec:characterization}

In this section, we go through all finite orthogonal rotation and rotary reflection groups, as described in Theorem \ref{thm:finite_orthogonal_groups}, and characterize their $f$-vectors using the tools developed in the previous sections.

We start with the group $C_n$, the cyclic group of order $n$, which is generated by a rotation with rotation-angle $2\pi/n$ around a given axis. Thus, $C_n$ has two non-regular orbits of size $1$, namely the two rays of the rotation axis.
%If $n=2$ these are flip-orbits, if $n>2$ they are not.
These are flip-orbits if and only if $n=2$.

\begin{theorem} \label{thm:C_n}
For $n>2$ we have
\begin{align*}\F(\C_n) &= \{\f\in\F \ : \ \f\equiv (1,1) \mod n\}^\diamond\\
&\cup \{\f = (\f_0,\f_2) \in \F \ : \ \f\equiv (0,2) \mod n, 2\f_0 - \f_2 \geq 2n-2\}^\diamond.
\end{align*}
\end{theorem}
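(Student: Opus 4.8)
The plan is to prove both inclusions. For $\F(\C_n) \subseteq \F'$ where $\F'$ denotes the right-hand side, I would apply Lemma~\ref{lem:f_vectors_mod_n}. Since $\C_n$ has exactly two non-regular ray-orbits, both of size $1$ (the rays $r$ and $-r$ of the rotation axis), and for $n>2$ neither is a flip orbit, we get $\Onreg = \{\{r\},\{-r\}\}$ and $O_2 = \emptyset$. Hence $(f(P) \bmod n) \in \{(1,0)\}^\diamond + \{(1,0)\}^\diamond = \{(2,0),(1,1),(0,2)\} \pmod n$. Translating back: $f(P) \equiv (1,1) \pmod n$ (a vertex on one ray, a facet on the other, or vice versa — these coincide) or $f(P) \equiv (0,2) \pmod n$ up to $\diamond$. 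It remains to justify the inequality $2\f_0 - \f_2 \geq 2n-2$ in the $(0,2)$-case (and its dual): when there is a facet on both $r$ and $-r$, each such facet is stabilized by the full $\C_n$, hence is a $\C_n$-invariant polygon with at least $n$ vertices. A polytope with a facet of degree $\geq n$ is ``$(n-2)$ edges away from simplicial'' at that facet; carrying this through Theorem~\ref{thm:Euler_Steinitz}\ref{it:simplicial} — more precisely, $2\f_0 - \f_2 = 4 + \sum_{F}(\deg F - 3)$ summed over facets — and using that there are two such facets each contributing at least $n-3$, gives $2\f_0 - \f_2 \geq 4 + 2(n-3) = 2n-2$.

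For the reverse inclusion $\F' \subseteq \F(\C_n)$, I would invoke Corollary~\ref{cor:certificates_work}: it suffices to exhibit, for each residue class $(p,q) \in \{(1,1),(0,2)\}$ (and we get $\diamond$ for free), certificates for the three base points $(p,q) + v_k(p,q)$, $k=1,2,3$, from Lemma~\ref{lem:finite_cone_distribution}. The strategy is to build small $\C_n$-symmetric polytopes realizing these base $f$-vectors and verify they are of base, left, or right type as needed. Natural starting points: a prism or antiprism over an $n$-gon (these are $\C_n$-symmetric, indeed more symmetric, with $f$-vectors $(2n,n+2)$ and $(2n,2n)$), a bipyramid over an $n$-gon, a pyramid-like $\C_n$-symmetric polytope built as $\conv(\C_n \cdot v \cup \C_n \cdot w)$ for suitably chosen rays, and the doubly-stacked / doubly-cut variants obtained by $\CS$, $\CC$, $\RP$, $\TP$ from Corollary~\ref{cor:operations} applied to an $n$-gon facet. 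For the $(0,2)$-class one wants a polytope whose $f$-vector is congruent to $(0,2)$ with small $2\f_0 - \f_2$ near $2n-2$: the $n$-gon bipyramid has $f = (n+2, 2n) \equiv (2,0) \pmod n$, its dual the $n$-gon prism has $f = (2n, n+2) \equiv (0,2) \pmod n$ with $2\f_0 - \f_2 = 4n - (n+2) = 3n - 2$ — close but one must also reach the extreme $2\f_0 - \f_2 = 2n-2$, which should come from a polytope with a single $n$-gonal facet on each axis ray and nothing else non-simplicial, e.g. an ``$n$-gon drum'' $\conv(\C_n\cdot v \cup \C_n\cdot w)$ where the two $n$-gons are the two axial facets.

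The main obstacle I anticipate is the bookkeeping in the reverse direction: assembling an explicit finite list of $\C_n$-symmetric polytopes that (a) hit exactly the required base $f$-vectors $(p,q)+v_k(p,q)$ for \emph{all} $n > 2$ uniformly, and (b) carry the correct left/right/base type labels, while simultaneously (c) not overshooting the inequality constraint in the $(0,2)$-class. Since $v_k$ depends on $n$ through $m$ (the least multiple of $n$ with $m \geq 4+p-2q$), one must check that these base points are genuinely small enough to be realized — in the $(1,1)$-class $m=n$ typically, giving base points around $(n,n)$, $(2n,2n)$, $(3n,3n)$, all comfortably realizable (e.g. by prisms, big prisms $\BP$, twisted prisms $\TP$ over an $n$-gon applied to a low-vertex seed); in the $(0,2)$-class the inequality $2\f_0-\f_2 \geq 2n-2$ interacts with which $v_k$ actually occurs, and one has to be careful that the ``otherwise'' branches of $v_k$ in Lemma~\ref{lem:finite_cone_distribution} are the relevant ones and that the certificate polytopes respect the inequality with no slack to spare at the minimal point. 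I expect the write-up to proceed by first settling the $(1,1)$-class with a couple of clean constructions valid for all $n$, then handling the $(0,2)$-class separately, possibly splitting into small-$n$ exceptions (e.g. $n = 3, 4, 5$) versus the generic case.
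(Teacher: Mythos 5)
Your proposal follows essentially the same route as the paper: the outer bound via Lemma~\ref{lem:f_vectors_mod_n} plus the facet-degree/flag count (your identity $2\f_0-\f_2 = 4+\sum_F(\deg F-3)$ is exactly the paper's $2f_1 \geq 3(f_2-2)+2n$ applied to the two axial facets of degree $\geq n$), and the inner bound via Corollary~\ref{cor:certificates_work} with the very polytopes the paper's table uses (pyramids over $kn$-gons for the $(1,1)$ class; the $n$-gon prism, twisted prism/antiprism with $2\f_0-\f_2=2n-2$, and $\RP$-stackings for the $(0,2)$ class). The only thing left implicit is the explicit certificate table, including the point that for the coset of $(n,n+2)$ the certificate must sit at $(3n,2n+2)$ because the rest of that coset violates the inequality --- a subtlety you correctly anticipate.
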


\begin{figure}
\begin{center}
\includegraphics[scale=0.2]{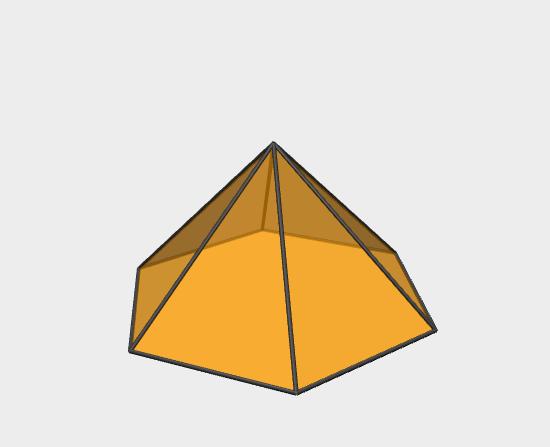}
\includegraphics[scale=0.2]{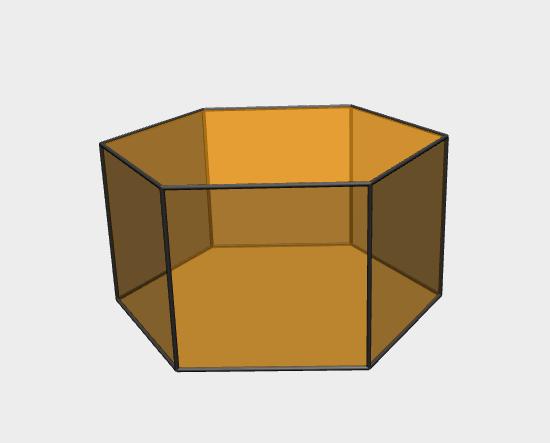}
\includegraphics[scale=0.2]{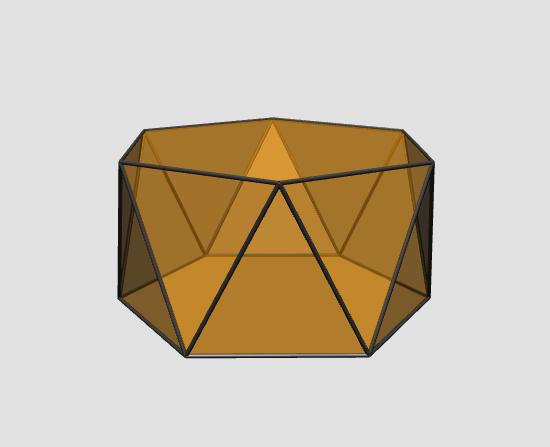}
\end{center}
\caption{The $\C_6$-symmetric polytopes $Pyr_6$, $Pri_6$ and $TPri_6$. This figure as well as following similiar figures are created using Sage \cite{SageMath} and Polymake \cite{polymake:2000}}\label{fig:special_cyc}
\end{figure}

In the proof, we need the following special $\C_n$-symmetric polytopes, see Figure~\ref{fig:special_cyc} for images:
\begin{enumerate}

\item $Pyr_k$ is a pyramid over a regular $k$-gon. $f(Pyr_k) = (k+1,k+1)$.

\item $Pri_k$ is a prism over a regular $k$-gon. $f(Pri_k) = (2k, k+2)$.

\item $TPri_k$ is a twisted prism over a regular $k$-gon. $f(TPri_k) = (2k, 2k+2)$.
\end{enumerate}

\begin{proof}
Denote by $\F'$ the right hand side of the assertion.
%The group $C_n$ has exactly two small orbits which are non-degenerate of size $1$.
By Lemma \ref{lem:f_vectors_mod_n}, we have for any $\C_n$-symmetric polytope $P$
$$\F(P) \subset \{(1,0)\}^\diamond + \{(1,0)\}^\diamond \mod n\equiv \{(0,2),(1,1)\}^\diamond \mod n.$$
The case $f(P)\equiv (0,2) \mod n$ yields that $P$ has one facet with at least $n$ vertices on each non-regular orbit. Counting $\{12\}$ -flags we therefore have
$$2f_1(P) = f_{12}(P) \geq 3\cdot (f_2(P) - 2) + 2\cdot n.$$
This is, by the Euler Equation \eqref{thm:Euler_Steinitz} \eqref{fml:Euler}, equivalent to
$2\f_0(P) - \f_2(P) \geq 2n-2.$
Since $\F(\C_n)$ is invariant under $\diamond$, we thus know that $\F(\C_n)\subset\F'$. Consider Corollary \ref{cor:certificates_work} and the following table to see that $F'\subset \F(\C_n)$:

\tcs
\begin{tabular}{cc|ccc}
$(p,q)$ & \ \hspace{0.7cm} & $v_1(p,q)$ & $v_2(p,q)$ &$v_3(p,q)$\\
\hline
$(1,1)$&
& \defRoot{$(n+1,n+1)$}\defT {$Pyr_n$} \certB
& \defRoot{$(2n+1,2n+1)$}\defT {$Pyr_{2n}$} \certB
& \defRoot{$(3n+1,3n+1)$}\defT {$Pyr_{3n}$} \certB\\
\hline
$(0,2)$&
& \defRoot{$(2n,n+2)$}\defLR {$\TP_{n,1}(*)$} {$Pri_n$}\certRL
& \defRoot{$(3n,2n+2)$}\defLR {$\TP_{n,1}(*)$} {$\RP_{n,1}(Pri_n)$}\certRL
& \defRoot{$(2n,2n+2)$}\defLR {$TPri_n$} {$\RP_{n,1}^2(Pri_n)$}\certLR
\end{tabular}

\end{proof}

%\newpage

Unlike $C_n$ with $n>2$, the group $\C_2$ has two flip-orbits. Thus, any $\C_2$-symmetric polytope may have edges with non-trivial stabilizer. This gives us tremendously more freedom in the construction of $\C_2$ symmetric polytopes.

We consider the group $\C_2$ as rotations around the z-axis. 
The next result shows that, in fact, any $f$-vector can be realized by a $\C_2$ symmetric polytope.

\begin{theorem}
We have
\begin{align*}
\F(\C_2) = \F.
\end{align*}
\end{theorem}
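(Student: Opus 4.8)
The plan is to show $\F \subset \F(\C_2)$ (the reverse inclusion is trivial since every $\C_2$-symmetric polytope is a polytope). By Corollary \ref{cor:certificates_work} applied with $n = 2$, it suffices to exhibit, for each residue class $(p,q) \bmod 2$ with $0 \le p \le q < 2$ — that is, for $(0,0)$, $(0,1)$ and $(1,1)$ — certificates (LR-, RL-, triangle- or B-certificates) for the three base points $v_1(p,q), v_2(p,q), v_3(p,q)$ produced by Lemma \ref{lem:finite_cone_distribution}. Since these residue classes cover all of $(\Z/2\Z)^2$ up to the $\diamond$ operation and $\F(\C_2)^\diamond = \F(\C_2)$, the resulting nine (or fewer, after $\diamond$) integer cones $(p,q) + v_i + 2\Cf$ exhaust $\F$. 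So the whole proof reduces to writing down a handful of small $\C_2$-symmetric polytopes with prescribed small $f$-vectors and the required left/right/base type.

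First I would compute the nine target vectors explicitly from Lemma \ref{lem:finite_cone_distribution} with $n=2$: for each $(p,q)$ one finds $m$ (the least multiple of $2$ with $m \ge 4 + p - 2q$) and then reads off $v_1, v_2, v_3$, checking the inequality $m + 2p - q \ge 4$ etc. to see which branch applies. The targets will all be small vectors near $(4,4)$, $(6,6)$, etc. Then for each target $\f$ I would produce a certificate. The key structural advantage, emphasised in the text right before the theorem, is that $\C_2$ has two flip-orbits, so a $\C_2$-symmetric polytope may carry vertices, edges or facets on the $z$-axis with only a $2$-element stabilizer, and — crucially — it may have whole edges fixed (flipped) by the rotation. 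This gives enormous freedom: one can take a base polytope (a simplicial-facet-and-simple-vertex polytope) that is \emph{generic} except for being invariant under a single half-turn, or build one by hand. Concretely, for the class $(1,1)$ one can use pyramids over $k$-gons placed symmetrically (these are $\C_2$-symmetric when $k$ is even, and for the odd adjustments one places the apex on the axis and the base $\C_2$-symmetrically in a plane through the axis); for $(0,0)$ and $(0,1)$ one uses prisms, twisted prisms, and their duals, exactly as in the proof of Theorem \ref{thm:C_n}, but now one is also free to use polytopes that are not symmetric under any larger group. In fact the cleanest route is: take \emph{any} small polytope $P$ with $f(P) = \f$ (these exist by Steinitz's theorem for every $\f \in \F$), and then perturb it so that it becomes $\C_2$-symmetric while keeping the $f$-vector and the desired type — this is possible because a generic polytope with a half-turn symmetry can be assembled from two generic "halves" glued across a plane.

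The main obstacle I anticipate is the smallest targets, where one must simultaneously hit a tiny $f$-vector \emph{and} guarantee the left-type (simplicial facet, trivial stabilizer) or right-type (simple vertex, trivial stabilizer) or base property. For very small polytopes there is little room, and one has to be careful that the face realizing the required simpliciality/simplicity has \emph{trivial} stabilizer under $\C_2$, not the order-$2$ stabilizer — i.e. it must be off the axis and not flipped. The resolution is to note that $|\C_2| = 2$ is so small that a $\C_2$-symmetric polytope with at least, say, three facets off the axis automatically has a facet-orbit of size $2$, hence a facet with trivial stabilizer; and one can always arrange such a facet to be a triangle by a local stacking. Equivalently, one can lean on the observation following Definition \ref{def:left_right_type}: a left-type polytope for $\f$ yields a base polytope for $\f + (2,4)$ after one $\CS_{3,2}$, so for the larger of the three base points a B-certificate is essentially free once the smaller ones are handled, and the triangle-certificate option (needing only \emph{some} $\C_2$-symmetric polytope at $\f$ and at $\f + (6,6)$, plus left/right types at the two intermediate points) gives enough slack to cover any stubborn small case. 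I would therefore organize the proof as a single table, mirroring the table in the proof of Theorem \ref{thm:C_n}, listing for each of the three residue classes the three base points and an explicit certificate (B, LR, RL, or triangle) built from pyramids, prisms, twisted prisms and their duals, decorated with extra careful stackings/cuttings where needed to fix parity and type; verifying each entry is then a routine $f$-vector bookkeeping via Corollary \ref{cor:operations}.
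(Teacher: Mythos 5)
Your skeleton is exactly the paper's: reduce to Corollary \ref{cor:certificates_work} with $n=2$, compute the nine base points for the residue classes $(0,0)$, $(0,1)$, $(1,1)$ (these come out to $(4,4),(6,6),(8,8)$; $(6,5),(8,7),(6,7)$; $(5,5),(7,7),(9,9)$), and certify each one. The class $(1,1)$ is fine as you describe it: $Pyr_4$, $Pyr_6$, $Pyr_8$ are $\C_2$-symmetric base polytopes, which is what the paper uses. The gap is in the remaining small targets. Your stated toolkit (prisms, twisted prisms, duals, careful stackings/cuttings) does not reach $(6,6)$, $(8,8)$, $(8,7)$ or $(6,7)$: a short case check of the increments in Corollary \ref{cor:operations} shows none of these vectors is obtained from a smaller element of $\F$ by the listed operations, which is precisely why the paper introduces the bespoke polytopes $ST$ (a convex hull of three $\C_2$-orbits with $f=(6,6)$), $DT$ ($f=(8,8)$), $RT_6$ ($f=(8,7)$) and $TT_4$ ($f=(6,7)$) --- ``tent''-like constructions that exploit the flip-orbit by stacking an \emph{edge}, an operation not in the general list. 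Without exhibiting such polytopes (and checking they are base/left/right type with the stabilizers being trivial where required), the proof is incomplete.

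The fallback you offer to avoid these explicit constructions does not hold up. The claim that one can ``take any small polytope $P$ with $f(P)=\f$ and perturb it so that it becomes $\C_2$-symmetric while keeping the $f$-vector'' is essentially the statement $\F\subset\F(\C_2)$ being proved, so invoking it is circular; and the analogous claim is \emph{false} for nearby groups (the paper shows $(6,6)\notin\F(\Dih_2)$ and $(4,4),(6,6)\notin\F(\G_1)$), so no generic perturbation principle can be taken for granted. Moreover, ``assembling a polytope from two generic halves glued across a plane'' describes mirror symmetry, not a half-turn, so it does not even model $\C_2$ correctly. The triangle-certificate escape hatch also does not save the class $(0,0)$: a triangle certificate for $(6,6)$ still requires \emph{some} $\C_2$-symmetric polytope with $f$-vector exactly $(6,6)$, which is the very object you have not constructed. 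So the plan needs to be completed by writing down explicit $\C_2$-symmetric polytopes (or concrete edge-stacking constructions) realizing $(6,6)$, $(8,8)$, $(8,7)$ and $(6,7)$.
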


\begin{figure}[h]
\includegraphics[scale=0.15]{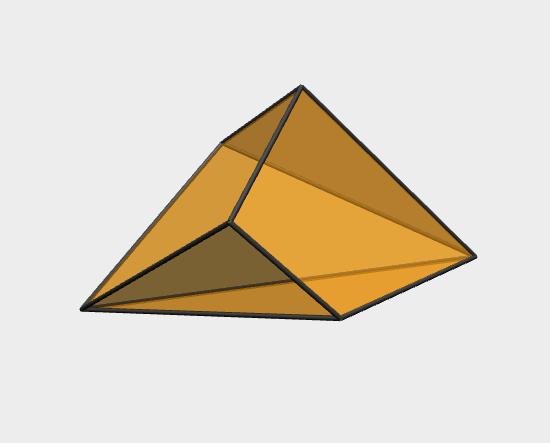}
\includegraphics[scale=0.15]{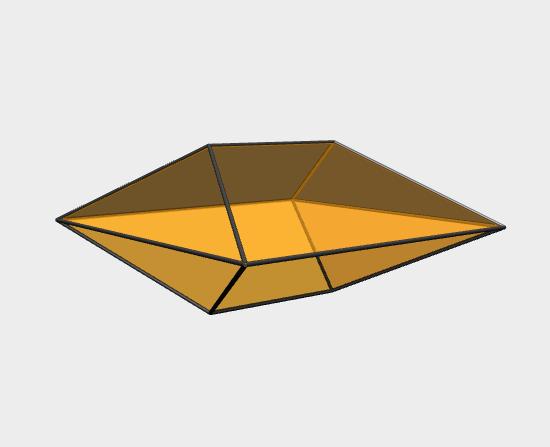}
\includegraphics[scale=0.15]{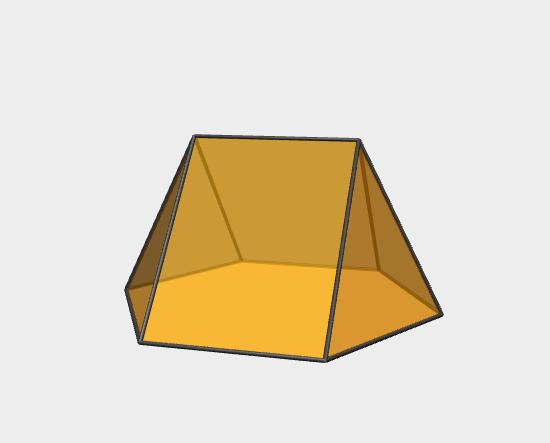}
\includegraphics[scale=0.15]{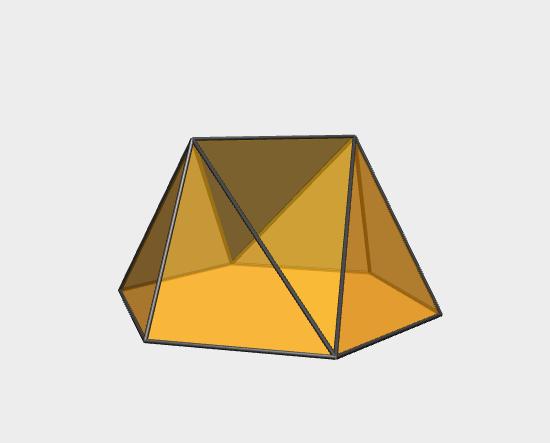}
\caption{The $\C_2$-symmetric polytopes $ST$, $DT$, $RT_6$ and $TT_6$}\label{fig:special_cyc_two}
\end{figure}

%\vspace*{20pt}

In the proof we will need the following special $\C_2$ symmetric polytopes with small $f$-vectors. See Figure~\ref{fig:special_cyc_two} for images.

\begin{itemize}

\item $ST := \conv \left(\C_2\cdot\{(-1,0,1),(1,1,0),(1,-2,-1)\}\right)$ is a polytope that looks like a 'scattered tent'. $f(ST) = (6,6)$.

\item $DT := \conv \left(\C_2\cdot\{(1,0,1), (2,2,0), (2,-2,0), (1,0,-1)\}\right)$ is a polytope that looks like a tent above and below a $4$-gon. $\f(DT) = (8,8)$.

\item $RT_{2k}$ (regular tent) is constructed by taking a regular $2k$-gon and stack a small edge above it, such that the new edge is paralell to two edges of the $2k$-gon. $f(RT_k) = (k+2,k+1)$.

\item $TT_k$ (twisted tent over $k$-gon) can be constructed by taking a regular $k$-gon and stack a small edge above it such that the new edge is not paralell to any edge of the $k$-gon. $f(TT_k) = (k+2,k+3)$.

\end{itemize}

\begin{proof}
Of course $\F(\C_2)\subset \F$. To see that 
$$\F = \{f\in \F \ : \ f\equiv (0,0),(0,1),(1,1) \mod 2 \}^\diamond\subset F(\C_2)$$
consider Corollary \ref{cor:certificates_work} and the following table:

%\tcs
\begin{longtable}{cc|ccc}
$(p,q)$  & \hspace{0.0cm} & $v_1(p,q)$ & $v_2(p,q)$ & $v_3(p,q)$\\
\hline
$(0,0)$&
& \defRoot{$(4,4)$}\defT {$Tet$} \certB
& \defRoot{$(6,6)$}\defT {$ST$} \certB
& \defRoot{$(8,8)$}\defT {$DT$} \certB\\
\hline
$(0,1)$ &
& \defRoot{$(6,5)$}\defT {$RT_4$} \certB
& \defRoot{$(8,7)$}\defT {$RT_6$} \certB
& \defRoot{$(6,7)$}\defT {$TT_4$} \certB\\
\hline
$(1,1)$ &
& \defRoot{$(5,5)$}\defT {$Pyr_4$} \certB
& \defRoot{$(7,7)$}\defT {$Pyr_6$} \certB
& \defRoot{$(9,9)$}\defT {$Pyr_8$} \certB\\
\end{longtable}
\end{proof}

Next, we characterize the $f$-vectors for the group $\Dih_d$.
This group consists of a $d$-fold rotation around a given axis $v$ and $2d$ two-fold rotations around axes that are orthogonal to $v$.
So we have one non-regular orbit consisting of the two rays belonging to $v$ which is a flip-orbit if and only if $d=2$.
%\newpage
Furthermore, $\Dih_d$ has two flip orbits of size $d$, whose rays each make up half of the two-fold rotation axes.

%\newpage

\begin{figure}
\begin{center}
\includegraphics[scale=0.2]{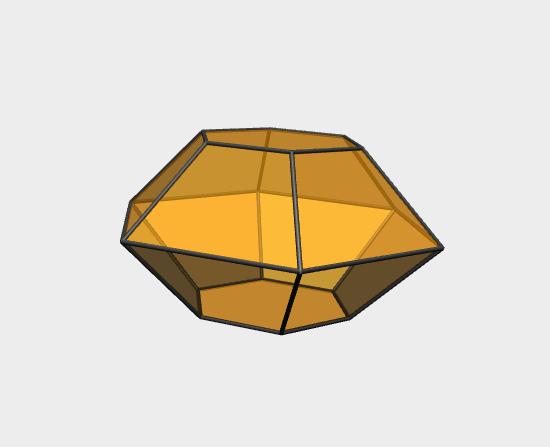}  
\includegraphics[scale=0.2]{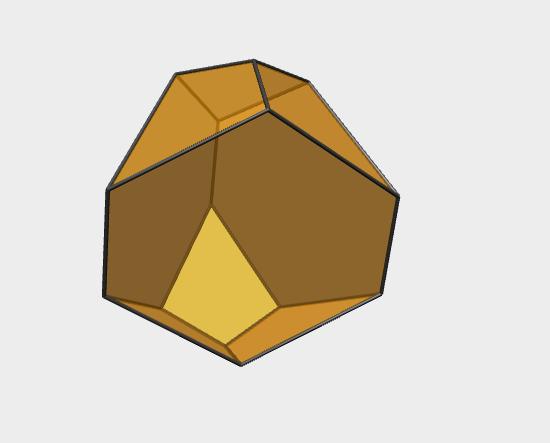}
\includegraphics[scale=0.2]{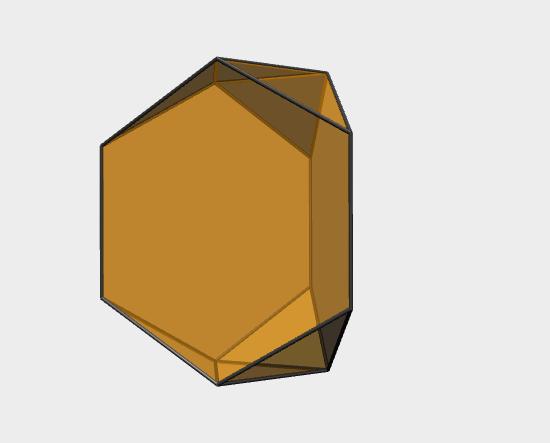}  %change angle so that upper facet can be seen?!
\includegraphics[scale=0.2]{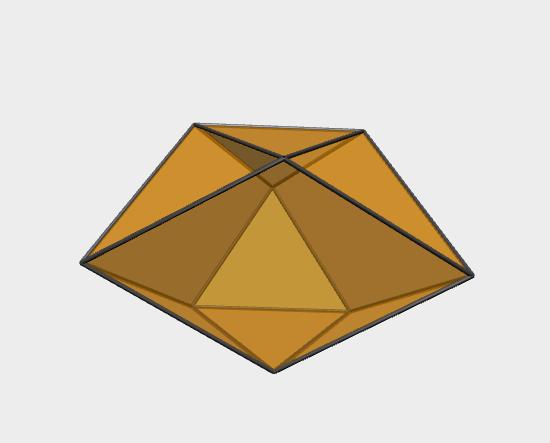}
\includegraphics[scale=0.2]{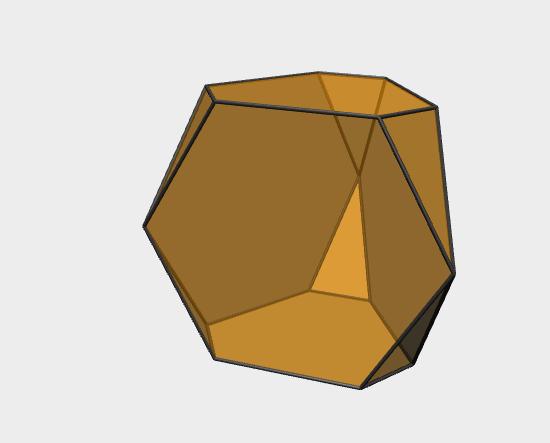}
\end{center}
\caption{The $D_5$-symmetric polytopes $DPri_6$, $Dia_3$, $EB_{6,3}$, $B_{4,3}$ and $B_{6,3}$}\label{fig:special_dih}
\end{figure}

These observations are sufficient to characterize $f$-vectors for the group $\Dih_d$:

\begin{theorem}
For $d > 2$ and $n=2d$ we have
\begin{align*}
\F(\Dih_d) &= \{\f\in\F \ : \ \f\equiv (0,2),(2,d) \mod n\}^\diamond\\
&\cup \{\f = (\f_0,\f_2)\in\F \ : \f\equiv (0,d+2), (d,d+2) \mod n , 2\f_0 - \f_2 \geq 3d-2\}^\diamond\\
\end{align*}
\end{theorem}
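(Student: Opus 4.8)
The plan is to establish the two inclusions $\F(\Dih_d)\subseteq\F'$ and $\F'\subseteq\F(\Dih_d)$, where $\F'$ denotes the right-hand side of the asserted identity and $n=2d$. For the first inclusion I would argue as in the proof of Theorem~\ref{thm:C_n}. As observed above, for $d>2$ the group $\Dih_d$ has the single non-regular ray-orbit $\{r,-r\}\in\Onreg$ of size $2$ on the $d$-fold axis and the two flip orbits $\Dih_d\cdot s,\Dih_d\cdot(-s)\in O_2$ of size $d$ on the $d$ two-fold axes. Feeding $|\{r,-r\}|=2$ and $|\Dih_d\cdot(\pm s)|=d$ into Lemma~\ref{lem:f_vectors_mod_n} and expanding
\begin{equation*}
\{(2,0)\}^\diamond+\{(0,0),(d,0)\}^\diamond+\{(0,0),(d,0)\}^\diamond
\end{equation*}
modulo $n$, where $2(d,0)\equiv(0,0)$, the sum collapses to the four classes $(0,2),(2,d),(0,d+2),(d,d+2)$ together with their $\diamond$-images. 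This already produces the coarse shape of $\F'$.

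The extra inequality on the classes $(0,d+2)$ and $(d,d+2)$ I would obtain by a flag count. In both of these classes the decomposition in Lemma~\ref{lem:f_vectors_mod_n} is forced to place a facet of $P$ on each of the two rays of the $d$-fold axis and a facet on each of the $d$ rays of one of the flip orbits. A facet met in its relative interior by the $d$-fold axis is fixed by the $d$-fold rotation, which acts on it as a rotation by $2\pi/d$ about its centre, hence it has $\geq d$ vertices; there are two such facets. A facet met in its relative interior by a two-fold axis is fixed by the corresponding involution, which restricts to the point reflection in its centre, hence it has even degree $\geq 4$; there are $d$ of those. Counting $(1,2)$-flags,
\begin{equation*}
2f_1(P)=f_{12}(P)=\sum_{F}\deg(F)\ \geq\ 3\bigl(f_2(P)-d-2\bigr)+2d+4d=3f_2(P)+3d-6,
\end{equation*}
and substituting the Euler relation $f_1=f_0+f_2-2$ of Theorem~\ref{thm:Euler_Steinitz} gives $2f_0(P)-f_2(P)\geq 3d-2$. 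Since $\F(\Dih_d)^\diamond=\F(\Dih_d)$, applying this to $P$ and to $P^\vee$ yields $\F(\Dih_d)\subseteq\F'$. I would also remark that in the two other classes $(0,2)$ and $(2,d)$ the analogous flag bounds ($2f_0-f_2\geq 2d-2$ in the first case, $2f_0-f_2\geq d+4$ and $2f_2-f_0\geq 2d-2$ in the second) are already implied by $f\in\F$ together with the congruence, which is why no inequality is recorded for them.

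For the reverse inclusion I would apply Corollary~\ref{cor:certificates_work}: it is enough to provide, for each residue class and each of the three representatives $v_1,v_2,v_3$, a certificate (LR-, RL-, triangle- or B-) for $\Dih_d$, assembled into a table like the one in the proof of Theorem~\ref{thm:C_n}. For the classes $(0,d+2)$ and $(d,d+2)$ the realisable $f$-vectors form the cone with apex $(2d,d+2)$ (which lies exactly on $2f_0-f_2=3d-2$), respectively its $\diamond$-mirror, so I would use the obvious analogue of Lemma~\ref{lem:finite_cone_distribution} for this shifted cone; its three representatives are $(2d,d+2),(4d,3d+2),(6d,5d+2)$. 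The basic solids are the $\Dih_d$-symmetric $d$-gonal prism $Pri_d$ with $f=(2d,d+2)$, the $d$-gonal bipyramid $=Pri_d^\vee$ with $f=(d+2,2d)$, and the $d$-gonal antiprism with $f=(2d,2d+2)$ (which lies on $2f_0-f_2=2d-2$); each has a $d$-gon facet or vertex of stabiliser order $d$ and only trivial-stabiliser orbits of smaller faces, so the operations $\CS,\CC,\RP,\RP^\vee,\TP,\TP^\vee,\HP,\THP,\BP$ of Corollary~\ref{cor:operations} are all available. For instance, $\RP_{d,2}$ and $\TP_{d,2}$ (iterated) on the $d$-gon facet of $Pri_d$ already give RL-certificates at all three representatives of the class $(0,d+2)$, and its $\diamond$-mirror class is covered by symmetry.

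The main obstacle I expect is the last step: writing out the complete list of twelve certificates with all degree- and stabiliser-hypotheses verified --- in particular for the ``unbalanced'' class $(2,d)$, whose smallest $f$-vectors (such as $(2d+2,3d)$ and $(4d+2,3d)$) are not realised by any obviously named solid, and for the apex representatives of the two shifted cones --- together with the separate treatment of the small case $d=3$, where some of the prism operations degenerate and a handful of ad hoc small $\Dih_3$-symmetric polytopes (in the spirit of those in Figure~\ref{fig:special_dih}) have to be supplied. Everything else is bookkeeping of the kind sketched above.
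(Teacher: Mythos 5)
Your outer inclusion $\F(\Dih_d)\subseteq\F'$ is essentially the paper's argument verbatim: the same application of Lemma \ref{lem:f_vectors_mod_n} to the size-$2$ orbit on the $d$-fold axis and the two flip orbits of size $d$, and the same $(1,2)$-flag count $2f_1\geq 3(f_2-d-2)+2d+4d$ combined with Euler's relation to obtain $2f_0-f_2\geq 3d-2$ on the classes $(0,d+2)$ and $(d,d+2)$. Your justification that the stabilizing flip acts on a facet it pierces as a planar point reflection (forcing even degree $\geq 4$) correctly fills in a step the paper leaves implicit, and your check that the analogous bounds for the classes $(0,2)$ and $(2,d)$ are already implied by $f\in\F$ is also right. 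The strategy for the reverse inclusion --- Corollary \ref{cor:certificates_work} plus a table of certificates, with shifted representatives for the two inequality-restricted classes --- is again the paper's, and your representatives $(2d,d+2)$, $(4d,3d+2)$, $(6d,5d+2)$ for the class $(0,d+2)$ together with the certificates built from $\RP_{d,2}^i(Pri_d)$ and $\TP_{d,2}$ do work for that class.

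There are, however, two genuine problems in the constructive half. First, the class $(d,d+2)$ is \emph{not} the $\diamond$-mirror of $(0,d+2)$: the mirror of $(0,d+2)$ is $(d+2,0)$, while $(d,d+2)$ is a separate class requiring its own three certificates (its minimal representatives in the paper's table are $(3d,3d+2)$, $(5d,3d+2)$ and $(5d,5d+2)$, not the reflections of those of $(0,d+2)$), so it cannot be ``covered by symmetry.'' Second, and more seriously, you do not actually produce the certificates for the classes $(2,d)$ and $(d,d+2)$, nor all three for $(0,2)$; you explicitly defer this as ``the main obstacle,'' but it is precisely where the content of this direction of the theorem lies. The paper resolves it by introducing the families $Dia_k$ (the dual of a $k$-gon with twisted smaller copies above and below, $f=(4k+2,3k)$) and the belt polytopes $EB_{2k,l}$ and $B_{2k,l}$ ($l$ regular $2k$-gons arranged around an $l$-gon, meeting in edges resp.\ vertices), together with combinations such as $\CS_{6,2}\circ\CC_{3,n}(Pri_d)$, which realize exactly the small vectors like $(2d+2,3d)$ and $(3d,3d+2)$ that you note are not attained by any standard solid. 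Until such explicit $\Dih_d$-symmetric polytopes are supplied and their degrees and stabilizers verified, the inclusion $\F'\subseteq\F(\Dih_d)$ is not established. (Incidentally, the paper's single table covers all $d>2$ uniformly, so no separate treatment of $d=3$ turns out to be necessary.)
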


For the construction of small $f$-vectors we need the following polytopes, which are $\Dih_d$ symmetric for certain parameters (see also Figure~\ref{fig:special_dih}):
\begin{itemize}
\item $DPri_k$ (double prism on a $k$-gon), a regular $k$-gon with a smaller copy above and below. $f(DPri_k)= (3k, 2k+2)$. This is $\Dih_d$ symmetric when $d$ divides $k$.
 \item $Dia_k$ (diamond of order $k$). This is the dual of the following polytope: A regular $k$-gon with twisted smaller copies above and below. \\
$f(Dia_k)=(4k+2,3k)$. This is $\Dih_d$ symmetric when $d$ divides $k$. %(Diamond)
% $Dih(2n,n+d+2)$ the convex hull of a $d$-gon prism and the orbit of a point $v$ chosen as follows: take the center point of an edge of the upper  $d$-gon and translate it arbitrarily upwards (in the direction of the $z$-axis). It looks like $d$ $6$-gons glued to each other along opposite edges.
\item $EB_{2k,l}$ (edge belt): the convex hull of $l$ regular $2k$-gons arranged along an $l$-gon, such that two neighboring $k$-gons intersect in an edge. \\
 $f(EB_{k,l})=(l\cdot 2(k-1), 2 \cdot \left\lfloor \frac{k-1}{2} \right\rfloor\cdot l +l+2)$. This is $\Dih_d$ symmetric when $d$ divides $l$.

\item $B_{2k,l}$ (belt): the convex hull of $l$ regular $2k$-gons arranged along an $l$-gon, such that two neighboring $2k$-gons intersect in a vertex. \\ $f(B_{2k,l}) = ((2k-1)\cdot l, 2(1+\lfloor \frac k 2 \rfloor)\cdot l +2)$. This is $\Dih_d$ symmetric, when $d$ divides $l$.
%the orbit of a regular $k$-gon with center point on the $x$-axis such that each $k$-gon in the orbit meets exactly two others in exactly one point each so that they form a 'belt'.   $f(B_{k,l})=(l\cdot (k-1), 2 \cdot \left\lfloor \frac{k}{4} \right\rfloor\cdot l +l+2)$

\end{itemize}

\begin{proof}
Denote by $\F'$ the right hand side of the assertion.
%The group $\Dih_d$ has one small orbit consisting of the two rays of the rotation axis. Since $d>2$, these orbits are non-degenerate.
%Furthermore $\Dih_d$ has two degenerate small orbits of size $d$, namely half of the rays that belong to one of the flip axes.
By Lemma \ref{lem:f_vectors_mod_n} we know that
\begin{align*}
\F(\Dih_d) &\subset \{f\in \F \ : \ (f\mod n)\in \{(2,0)\}^\diamond + \{(0,0),(0,d)\}^\diamond + \{(0,0),(0,d)\}^\diamond\}\\
&= \{f\in \F \ : \ f\equiv (0,2),(0,d+2),(2,d),(d,d+2)\mod n\}^\diamond.
\end{align*}
If $\f_2(P) \equiv d+2 \mod n$ then $P$ has facets on the rotation axis containing at least $d$ vertices.
Furthermore, $P$ has also facets on exactly one of the flip-orbits, containing at least $4$ vertices. By counting $\{1,2\}$-flags we have
$$2f_1(P) = f_{12}(P) \geq 3\cdot(f_2-d-2) + d\cdot 2 + 4\cdot d.$$
%2f_0 + 2f_2 - 4 = 2(f_0 + f_2 - 2) >= 3f_2 - 3d - 6 + 2d + 4d = 3f_2 + 3d - 6 <-> 2f_0 - f_2 >=3d-2
By applying Euler's equation \ref{thm:Euler_Steinitz} \eqref{fml:Euler}, we have
$2\f_0(P) - \f_2(P) \geq 3d-2$.
Since $\F(\Dih_d)$ is invariant under $\diamond$, we know that $\F(\Dih_d)\subset \F'$. To see that $\F'\subset \F(\Dih_d)$ consider Corollary \ref{cor:certificates_work} and the following table:

\tcs
\begin{longtable}{cc|ccc}
$(p,q)$  &  \ \hspace{0.7cm} & $v_1(p,q)$ & $v_2(p,q)$ & $v_3(p,q)$\\
\hline
$(0,2)$ &
& \defRoot{$(2n,n+2)$}\defLR {$\BP_{d,2}(TPri_d)$} {$Pri_n$} \certRL
& \defRoot{$(n,n+2)$}\defLR {$TPri_d$} {$DPri_n$} \certLR
& \defRoot{$(2n,2n+2)$}\defT {$\RP_{d,2}(TPri_d)$} \certB\\
\hline
$(2,d)$&
& \hspace{-14pt} \defRoot{$(2n+2,n+d)$}\defLR {$\CS_{6,2}\circ\CC_{3,n}(Pri_d)$} {$Dia_d$}  \certRL
& \defRoot{$(n+2, n+d)$}\defLR {$\CS_{d,2}(Pri_d)$} {$\RP^\vee_{d, 2}(Dia_d)$} \certLR
&  \hspace{-14pt} \defRoot{$(2n+2, 2n+d)$}\defLR {$\CS_{d,2}\circ\RP_{d,2}(Pri_d)$} {$(\RP^{\vee}_{d, 2})^2(Dia_d)$} \certLR\\
\hline
$(0,d+2)$&
&  %\hspace{-14pt} 
\defRoot{$(n,d+2)$}\defLR {$\TP_{d,2}(*)$} {$Pri_d$} \certRL
& \defRoot{$(2n,n+d+2)$}\defT {$EB_{6,d}$} \certB
& \defRoot{$(3n,2n+d+2)$}\defT {$\BP_{d,2}(Pri_d)$} \certB\\
\hline
$(d,d+2)$&
&  \hspace{-14pt} 
\defRoot{$(2n+d,n+d+2)$}\defT {$B_{6,d}$} \certB
& \defRoot{$(n+d,n+d+2)$}\defLR {$B_{4,d}$} {$B_{8,d}$} \certLR
& \defRoot{$(2n+d,2n+d+2)$}\defT {$\RP_{d,2}(B_{4,d})$} \certB\\
\end{longtable}
\end{proof}

Next, we consider $\Dih_2$. This group can be interpreted as the group of all flips around coordinate axes. Thus, $\Dih_2$ has exactly three flip-orbits of size $2$, each consisting of the rays of one flip-axis.

Interestingly, the respective characterization of $f$-vectors contains the exceptional case $f = (6,6)$, which can not be realized by a $\Dih_2$ symmetric polytope.

%cuboctahedron $CubOc = \conv (\O\cdot (1,1,0)) = \conv\{(0,\pm 1,\pm 1),(\pm 1,0,\pm 1),(\pm 1,\pm 1,0)\}$.

\begin{theorem}
We have
%\begin{align*}
%\F(\Dih_2) &= (\{\f\in\F \ : \ \f\equiv (0,0),(0,2),(2,2) \mod 4\}\setminus\{(6,6),(10,10)\})^\diamond.
%\end{align*}
\begin{align*}
\F(\Dih_2) &= \{\f\in\F \ : \ \f\equiv (0,0),(0,2),(2,2) \mod 4\}^\diamond\setminus\{(6,6)\}.
\end{align*}
\end{theorem}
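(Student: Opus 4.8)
The plan is to follow the exact template used for the groups $\C_n$ and $\Dih_d$: first establish the ``outer bound'' $\F(\Dih_2)\subset\F'$ where $\F'$ is the right-hand side of the assertion, and then use Corollary~\ref{cor:certificates_work} to certify $\F'\subset\F(\Dih_2)$ by exhibiting a table of certificates, while separately arguing that the single point $(6,6)$ cannot occur.

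For the outer bound, I would apply Lemma~\ref{lem:f_vectors_mod_n} with $n=4$ and the three flip-orbits of size $2$. Since all three non-regular orbits lie in $O_2$, the lemma gives $(f(P)\bmod 4)\in\bigl(\sum_{i=1}^3\{(0,0),(2,0)\}^\diamond\bmod 4\bigr)$. Expanding this Minkowski sum of three copies of $\{(0,0),(2,0),(0,2)\}$ modulo $4$ yields exactly the residues $(0,0),(0,2),(2,2)$ and their $\diamond$-images (e.g.\ $(2,0)+(2,0)+(0,0)\equiv(0,0)$, $(2,0)+(0,0)+(0,0)\equiv(2,0)$, $(2,0)+(0,2)+(0,0)\equiv(2,2)$, etc.), so $\F(\Dih_2)\subset\{f\in\F:f\equiv(0,0),(0,2),(2,2)\bmod 4\}^\diamond$. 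Unlike the $\C_n$ and $\Dih_d$ cases there are no extra flag inequalities here, because a $\Dih_2$-symmetric facet has no forced lower bound on its vertex number beyond the trivial one (a flip orbit on a facet forces only a quadrilateral or an edge, not a large polygon), so no Dehn--Sommerville-type refinement is needed and the congruence conditions alone give $\F'$.

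The main work, and the main obstacle, is the removal of $(6,6)$ and the construction table. For $(6,6)$ I would argue directly: a $\Dih_2$-symmetric polytope with $f=(6,6)$ has $12$ edges; by Euler it is both simplicial and simple only if $2f_0-f_2=6\ge 4$ with equality failing, so it is neither forced extremal, but one can enumerate the possible orbit structures. With $6$ vertices partitioned into $\Dih_2$-orbits of sizes in $\{1,2,4\}$ hitting the three flip-axes — but $\Dih_2$ has no fixed rays, so every vertex orbit has size $2$ or $4$; three size-$2$ orbits (one per axis) is the only way to get $6$, and dually for facets. This pins down the combinatorial type essentially uniquely (an octahedron-like object with vertices on the three axes), and one checks this forces $f_2\ne 6$ or produces a contradiction with convexity/planarity of the putative $6$ facets. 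I would write this as a short case analysis. For the certificate table, I need, for each of $(p,q)\in\{(0,0),(0,2),(2,2)\}$ and each $k=1,2,3$, a certificate at $(p,q)+v_k(p,q)$ with $v_k$ as in Lemma~\ref{lem:finite_cone_distribution} (so $n=4$, and $m=4$ for $(0,0)$ and $(2,2)$, $m=4$ for $(0,2)$ as well since $4+0-4=0\le 4$); concretely the target vectors are around $(4,4),(8,8),(12,12)$ for $(0,0)$, around $(4,6),(8,10),(12,14)$ for $(0,2)$, and around $(6,6),(10,10),(14,14)$ for $(2,2)$. The point $(6,6)$ being excluded means the $v_1$-slot of $(2,2)$ must instead be served at $(2,2)+(m+2n,m+n)=(14,10)$ or one argues the cone $X(2,2)$ minus $(6,6)$ is still covered by the remaining two generators plus a separate ad hoc construction for the finitely many small points of $X(2,2)$ other than $(6,6)$ (e.g.\ $(10,10)$, $(14,14)$, $(10,6)$, \dots). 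I would supply explicit $\Dih_2$-symmetric polytopes — tetrahedron-like and octahedron/cube-like starting solids, together with the operations $\RP$, $\CS$, $\CC$, $\TP$ from Corollary~\ref{cor:operations} applied to facets/vertices with trivial stabilizer or with stabilizer of order $2$ on a flip-axis — to fill each slot, presenting them as a \texttt{longtable} exactly as in the $\Dih_d$ proof. The genuinely delicate part is verifying that enough $\Dih_2$-symmetric ``small'' polytopes with the right stabilizer data actually exist to populate every slot of the table once $(6,6)$ is forbidden, since that point is precisely the kind of minimal configuration one would naively want to use.
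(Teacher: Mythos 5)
Your overall strategy (outer bound from Lemma \ref{lem:f_vectors_mod_n} with $n=4$ and three flip-orbits of size $2$, then a certificate table via Corollary \ref{cor:certificates_work}) is exactly the paper's, and the congruence computation for the outer bound is correct. But there are genuine gaps. First, your exclusion of $(6,6)$ rests on a false enumeration: you claim the only way to distribute $6$ vertices into $\Dih_2$-orbits is three size-$2$ orbits, one per flip-axis, but $6=2+4$ is also possible (one flip-orbit of vertices plus one regular orbit), and that is in fact the only case compatible with $f\equiv(2,2)\bmod 4$: if all three axes carried vertices, no axis could carry a facet orbit and then $f_2\equiv 0\bmod 4$, contradicting $f_2=6$. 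So the ``octahedron-like'' configuration you propose to analyze cannot occur at all, while the configuration that can occur (two axis vertices of degree $\geq 4$, two axis facets with $\geq 4$ edges, two axis edges, everything else in regular orbits) is not addressed by your sketch; note that a plain degree count gives no contradiction here, since $2\cdot 4+4\cdot 3=20=2f_1$. The paper kills this case by observing that the edges at the axis vertices, the edges of the axis facets and the two axis edges are pairwise distinct, so $f_1\geq 4\cdot 4+2=18$, while Euler forces $f_1=f_0+f_2-2=10$. Some argument of this kind is needed and is missing from your proposal.

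Second, the certificate side is not worked out where it matters. The slot at $(2,2)+v_1(2,2)=(6,6)$ cannot be filled by a polytope with that $f$-vector, and neither of your proposed fixes is correct as stated: a single certificate at $(14,10)$ does not cover the coset $(6,6)+4\Cf$ (for instance $(10,14)\notin (14,10)+4\Cf$), and $(6,6)+4\Cf\setminus\{(6,6)\}$ is an infinite set, not ``finitely many small points''. The paper's device is a triangle certificate at $(6,6)$ with the top entry omitted --- a left type polytope at $(10,14)$, a right type polytope at $(14,10)$ and a polytope at $(18,18)$ --- which certifies the whole coset except $(6,6)$ itself; equivalently one could give full certificates at both $(10,14)$ and $(14,10)$. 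You would need to commit to one of these. In addition, your target vectors for the residue class $(0,2)$ are miscomputed: here $m=0$ in Lemma \ref{lem:finite_cone_distribution}, so the targets are $(8,6)$, $(12,10)$ and $(8,10)$, not $(4,6),(8,10),(12,14)$ (indeed $(4,6)\notin\F$). Finally, the proposal supplies none of the explicit small $\Dih_2$-symmetric polytopes the slots require (the paper constructs ad hoc polytopes with $f$-vectors $(10,10)$, $(10,14)$, $(12,12)$, $(14,14)$, $(18,18)$ in addition to standard solids); you correctly flag this as the delicate part, but it is left entirely open.
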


For small $f$-vectors we consider the following special $\Dih_2$-symmetric polytopes (see Figure~\ref{fig:special_dih_two}):
\begin{itemize}

%\item $Dih_2(16,14) = \conv \{(\pm 2,\pm 2,\pm 1) (\pm 2,0,\pm 2), (0,\pm 2,\pm 2)\}$ %edge belt?

\item $Dih_2(10,10) = \conv(\Dih_2\cdot \{(6,0,0),(1,1,1),(2,1,-2)\}$

\item $Dih_2(10,14) = \conv(\Dih_2\cdot \{(2,0,0),(1,1,1),(\frac 34, 1, \frac 32)\})$

\item $Dih_2(12,12) = \conv \{\Dih_2\cdot (2,1,1), (-2,1,1), (1,0,2)\}$

\item $Dih_2(14,14) = \conv \{\Dih_2\cdot (4,0,0),(1,3,0),(2,0,1),(0,2,1)\}$

\item $Dih_2(18,18) = \conv \{\Dih_2\cdot (4,0,0),(1,3,0),(2,0,1),(0,2,1),(\frac 18, \frac{23}{8}, 1)\}$
\end{itemize}

\begin{figure}
\begin{center}
\includegraphics[scale=0.2]{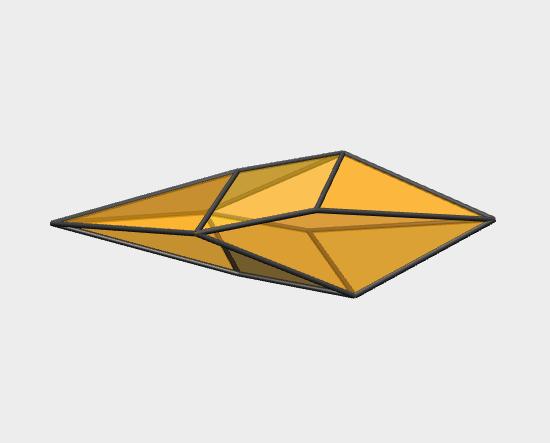}
\includegraphics[scale=0.2]{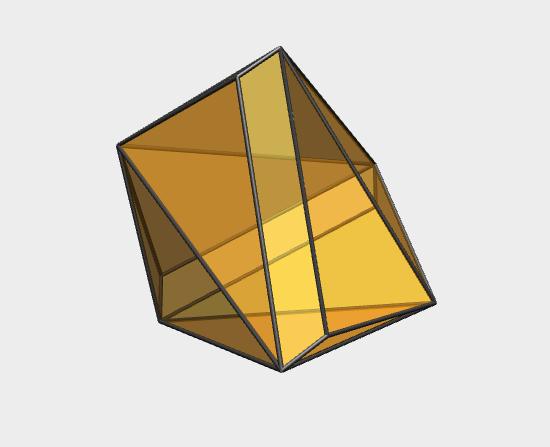}
\includegraphics[scale=0.2]{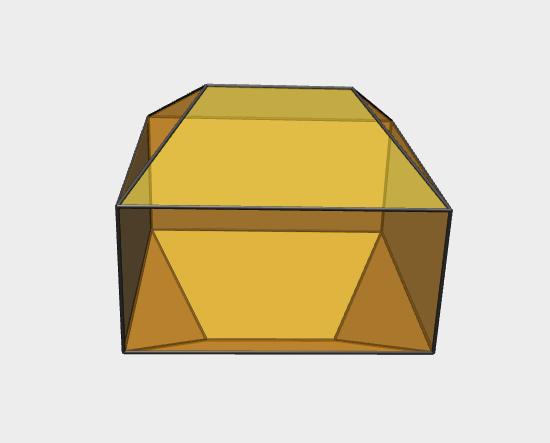}

\includegraphics[scale=0.2]{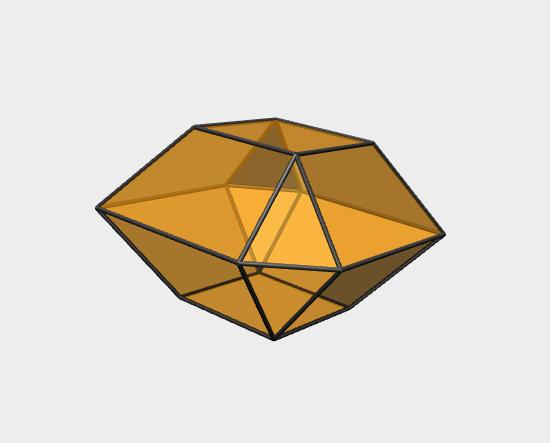}
\includegraphics[scale=0.2]{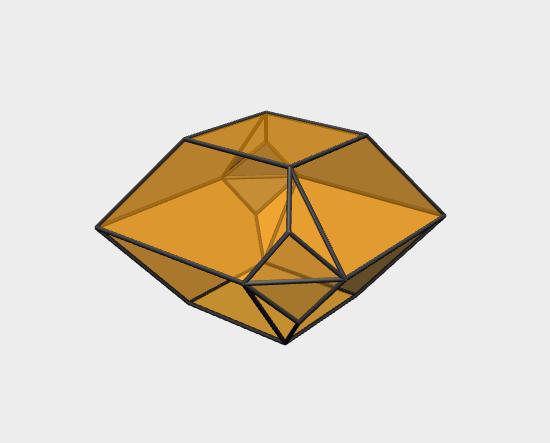}
\end{center}
\caption{The $\Dih_2$-symmetric polytopes $Dih_2(10,10)$, $Dih_2(10,14)$, $Dih_2(12,12)$, $Dih_2(14,14)$ and $Dih_2(18,18)$}\label{fig:special_dih_two}
\end{figure}

\begin{proof}
Denote by $\F'$ the right hand side of the assertion.
By Lemma \ref{lem:f_vectors_mod_n} we have
\begin{align*}
\F(\Dih_2)&\subset \{f\in\F \ : \\
			& \hspace{2em}\ (f \mod 4)\in \{(0,0),(0,2)\}^\diamond + \{(0,0),(0,2)\}^\diamond + \{(0,0),(0,2)\}^\diamond\}\\
&=\{f\in\F \ : \ f\equiv (0,0),(0,2),(2,2)\mod 4\}^\diamond.
\end{align*}
Next we show that $f(P)\neq (6,6)$.
Suppose $f(P)\equiv (2,2) \mod 4$. Then each flip orbit intersects two vertices, two edges and two facets of $P$, respectively. These vertices and facets are incident with at least $4$ edges due to the induced symmetry. Furthermore, due to the symmetry no edge intersects more than one flip-axis. Thus $f_0(P) + f_2(P) -2 = f_1(P)\geq 4\cdot 4 + 2= 18$ and therefore $f(P) \neq (6,6)$.
%Suppose $f(P) = (10,10)$, then there are exactly $18$ edges in $P$. Due to the discussion above, we know the $10$ vertices of $P$. Two of them are contained in a small orbit, eight of them are contained in two quadrilateral faces which oppose each other.

%I am not quite sure if there isn't any polytope with $f(P) = (10,10)$ ??

Altogether, this shows $F(\Dih_2)\subset F'$. To see that $F'\subset F(\Dih_2)$ consider Corollary \ref{cor:certificates_work} and the following table (note that the triangle certificate for $f = (6,6)$ is missing the top entry, which is not relevant for the existence of $f$-vectors other than $(6,6)$).
\tcs
\begin{longtable}{cc|ccc}
\hspace{1em}$(p,q)$  & \ \hspace{0.7cm} & $v_1(p,q)$ & $v_2(p,q)$ & $v_3(p,q)$\\
\hline
\hspace{1em}$(0,0)$ &
& \defRoot{$(4,4)$}\defT {$Tet$} \certB
& \defRoot{$(8,8)$}\defT {$DT$} \certB
& \defRoot{$(12,12)$}\defT {$Dih_2(12,12)$} \certB\\
\hline
\hspace{1em}$(0,2)$ &
& \defRoot{$(8,6)$}\defLR {$CubOc$} {$Cub$} \certRL
& \defRoot{$(12,10)$}\defLR {$\RP_{4,2}(TPri_4)$} {$DPri_4$} \certRL
& \defRoot{$(8,10)$}\defLR {$TPri_4$} {$EB_{6,4}$} \certLR\\
\hline
\hspace{1em}$(2,2)$ &
& \defRoot{$(6,6)$}\defLRTX {$Dih_2(10,14)$\hspace{0.5cm}} {\hspace{0.5cm}$Dih_2(10,14)^\vee$} {$\varnothing$} {$Dih_2(18,18)$}\certTri 
%& \defRoot{$(10,10)$}\defLRTX {$\CS_{6,2}(Pri_6)$} {$\CS_{6,2}(Pri_6)^\vee$} {$\varnothing$} {$\RP_{4,2}(Dih_2(14,14))$}\certTri
& \defRoot{$(10,10)$}\defT {$Dih_2(10,10)$} \certB
& \defRoot{$(14,14)$}\defT {$Dih_2(14,14)$} \certB\\
\hline
\end{longtable}
\end{proof}
%\newpage
Now we consider the tetrahedral rotation group $\T$.
Take a given regular tetrahedron with barycenter $0$.
The tetrahedral rotation group $\T$ contains four order three rotations around axis which go through a vertex and the respectively opposing facet.
Furthermore it contains flips around axes through the midpoints of two opposing edges.

This group has two non-regular orbits of size $4$ which are not flip-orbits and a flip-orbit of size $6$.

We do not need any further polytopes other than the polytopes in Notation~\ref{not:special_polytopes} to proof the following characterization:

\begin{theorem}
We have
\begin{align*}
\F(\T) = \{\f\in\F \ : \ \f\equiv (0,2),(0,8),(4,4),(4,10) \mod 12\}^\diamond.
\end{align*}
\end{theorem}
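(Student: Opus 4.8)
The plan is to run the same two-step scheme used for $\C_n$ and $\Dih_d$: bound $\F(\T)$ from above with Lemma~\ref{lem:f_vectors_mod_n}, and then recover every surviving $\f$-vector through the certificate machinery of Corollary~\ref{cor:certificates_work}. For the upper bound I would read off the orbit data of $\T$ stated just before the theorem: the two order-three ray-orbits lie in $\Onreg$ with cardinality $4$, while the single edge-axis orbit lies in $O_2$ with cardinality $6$. Feeding this into Lemma~\ref{lem:f_vectors_mod_n} gives, modulo $12$, the Minkowski sum $\{(4,0)\}^\diamond + \{(4,0)\}^\diamond + \{(0,0),(6,0)\}^\diamond$. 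Expanding and using $\F(\T)^\diamond=\F(\T)$, the admissible residues collapse to the four classes $(0,2),(0,8),(4,4),(4,10)$ listed in the theorem, together with one further candidate class $(6,8)$ that must then be removed.

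The conceptual heart of the upper bound is therefore disposing of the class $(6,8)$. I would proceed as in the $\Dih_d$ proof by counting incidences: a polytope $P$ with $\f(P)\equiv(6,8)\bmod 12$ must place a facet on each of the two order-three orbits, giving eight facets each of degree at least three, and must occupy all six edge-axis rays with vertices. Because each such vertex sits on a two-fold axis, the $180^\circ$ rotation pairs its incident edges, so each of these six vertices has degree at least four. Summing vertex degrees yields $2f_1(P)=f_{01}(P)\ge 3f_0(P)+6$, which by the Euler relation \eqref{fml:Euler} becomes a lower bound on $2f_2(P)-f_0(P)$. Deciding whether this inequality genuinely eliminates the residue class, rather than merely trimming it, is exactly the subtle point I expect to be the main obstacle: the smallest candidate in the class sits on the simplicial boundary $2f_0-f_2=4$ of $\F$, so the bound must be weighed carefully against the cone boundary before one may conclude that $(6,8)$ contributes no new $\f$-vectors.

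For the reverse inclusion I would supply, for each of the four classes and each of the three shifts $v_1,v_2,v_3$ of Lemma~\ref{lem:finite_cone_distribution}, a certificate assembled from the catalogue in Notation~\ref{not:special_polytopes}. The wealth of $\T$-, $\O$- and $\I$-symmetric solids makes most of this bookkeeping: the class $(0,8)$ can be anchored on $TrTet$ (right type, $\f=(12,8)$) and $Ico$ ($\f=(12,20)$); the class $(0,2)$ on $CubOc$, $TrCub$ and $RCubOc$; and the remaining shifts, including the class $(4,10)$, filled in with the prism and cutting operations $\RP,\TP,\HP,\BP,\CS,\CC$ of Corollary~\ref{cor:operations}, applied at a facet or vertex lying off the special axes so that the produced facet or vertex has trivial stabilizer. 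One genuine wrinkle is the class $(4,4)$: its only realization is the tetrahedron, whose faces and vertices all sit on special axes, so it is neither a base nor a left/right type polytope; the point $(4,4)$ must instead be covered by a triangle-certificate using the tetrahedron merely as the central polytope while the left- and right-type polytopes are produced one and two levels up the cone. Beyond this, the main construction-side difficulty is verifying, for each Archimedean solid read only with its $\T$-symmetry, that it genuinely possesses a simplicial facet and a simple vertex of trivial stabilizer, so that it qualifies as the base, left- or right-type polytope the certificate demands.
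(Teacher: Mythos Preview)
Your Minkowski-sum computation is correct and yields \emph{five} residue classes modulo~$12$, namely $(0,2),(0,8),(4,4),(4,10),(6,8)$ up to~$\diamond$.  The mistake is the next step: you try to \emph{eliminate} $(6,8)$, and you cannot.  The octahedron is $\T$-symmetric (since $\T\le\Oc$) and has $f(Oc)=(6,8)$, so $(6,8)\in\F(\T)$.  What has happened is that the displayed statement is missing the class $(6,8)$; the paper's own main theorem (Theorem~\ref{thm:main}) and the certificate table for $\T$ in the appendix (Table~\ref{tab:tetra}) both include it, and the paper's proof never attempts any exclusion.  Indeed the paper's argument for the upper bound is exactly your first paragraph and nothing more: Lemma~\ref{lem:f_vectors_mod_n} already gives $\F(\T)\subset\F'$ with $\F'$ the set built from all five residue classes.

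Your proposed incidence count would not have worked anyway.  First, a vertex on a flip axis need not have degree~$\ge 4$: the $180^\circ$ rotation pairs the incident edges, but an edge lying along the axis is fixed, so odd degree (in particular degree~$3$) is possible.  Second, even granting your inequality $2f_1\ge 3f_0+6$, Euler's relation turns it into $2f_2-f_0\ge 10$, and the smallest point of the class $(6,8)$ inside $\F$ is $(6,8)$ itself, where $2\cdot 8-6=10$; so the bound removes nothing.  Your own hesitation (``whether this inequality genuinely eliminates the residue class'') was well placed.

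For the reverse inclusion you should therefore supply certificates for all five classes.  Your sketch for $(0,2),(0,8),(4,4),(4,10)$ matches the paper's approach; add a row for $(6,8)$ anchored on the octahedron (this is what Table~\ref{tab:tetra} does, via a triangle certificate at $(6,8)$ and $\RP_{3,4}$-iterates of $Oc$ for the two larger shifts).
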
\begin{proof}
Denote by $F'$ the right hand side of the assertion. 
%The group $\T$ has two non-degenerate small orbits of size $4$, namely half of the rays of the rotation axes. Furthermore, it has a degenerate small orbit of size $6$.
By Lemma \ref{lem:f_vectors_mod_n} we have
$$\F(\T) \subset \{\f\in\F \ : \ (\f\mod 12)\in \{(0,8), (8,0), (4,4)\} + \{(0,0),(0,6),(6,0)\} $$ which is equivalent to $\F(T)\subset F'$.
To see that $F'\subset \F(T)$ consider Table \ref{tab:tetra} in the appendix.
\end{proof}

%\note{(p,q) = (6,8) nochmal prüfen; eventuell ist bei (16,22) eher $CS_{3,4}(CubOc)$ richtig}

Next, we consider the octahedral rotation group $\Oc$. Take a given regular cube with barycenter $0$.
The group $\Oc$ contains three four-fold rotations around axis through opposing facets.
Furthermore it contains four three-fold rotations around axis through two opposing vertices and flips around axis through the midpoints of the edges.

Thus, the group $\Oc$ has one non-regular orbit of size $6$ consisting of the rays of the threefold rotation axes. Furthermore it has a non-regular orbit of size $8$ consisting of all rays of the fourfold rotation axes.
Lastly, it has a flip orbit of size $12$ consisting of all rays belonging to the flip axes.

%Now, we are able to give the characterization:

\begin{theorem}
We have 
\begin{align*}
\F(\Oc) = \{\f\in\F \ : \ \f\equiv (0,2),(0,14),(6,8),(6,20),(8,18),(12,14) \mod 24\}^\diamond.
\end{align*}
\end{theorem}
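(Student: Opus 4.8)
The plan is to follow exactly the two-sided scheme used for the preceding groups. First I would establish the inclusion $\F(\Oc)\subset\F'$, where $\F'$ denotes the right-hand side. By Lemma~\ref{lem:f_vectors_mod_n} applied to $\Oc$, whose non-regular orbits are the size-$6$ orbit $X_3$ of the threefold axes, the size-$8$ orbit $X_4$ of the fourfold axes, and the flip orbit $X_2$ of size $12$, any $\Oc$-symmetric polytope $P$ satisfies
\begin{equation*}
(f(P)\bmod 24)\in\bigl(\{(6,0)\}^\diamond+\{(8,0)\}^\diamond+\{(0,0),(12,0)\}^\diamond\bmod 24\bigr).
\end{equation*}
Expanding this Minkowski sum over $\Z/24\Z$ and applying the $\diamond$ operation should yield precisely the six residue classes $(0,2),(0,14),(6,8),(6,20),(8,18),(12,14)$ and their $\diamond$-images. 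I would just carry out this finite computation; the only care needed is bookkeeping of which sums collapse under reduction mod $24$ and under $\diamond$. Note that, unlike the $\C_n$ and $\Dih_d$ cases, I do not expect any extra inequality to be needed here: the conjectured $\F'$ is a pure union of residue classes intersected with $\F$, so no flag-counting argument restricting a residue class should arise. (One should double-check this by a quick flag count on a facet forced onto the fourfold axis — it must have at least $4$ vertices — to confirm that the resulting inequality $2f_0-f_2\ge\text{something}$ is already implied by $2f_0-f_2\ge 4$ together with the congruence, i.e. is vacuous.)

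For the reverse inclusion $\F'\subset\F(\Oc)$ I would invoke Corollary~\ref{cor:certificates_work}: it suffices to exhibit, for each of the six residue pairs $(p,q)$ and each $k=1,2,3$, a certificate (base, LR, RL, or triangle) for the vector $(p,q)+v_k(p,q)$ as defined in Lemma~\ref{lem:finite_cone_distribution} with $n=24$. This is a table of eighteen certificates, analogous to Table~\ref{tab:tetra} for $\T$. The starting material is the list in Notation~\ref{not:special_polytopes}: the octahedron $Oc$ $(6,8)$, cube $Cub$ $(8,6)$, cuboctahedron $CubOc$ $(12,14)$, rhombic dodecahedron $RDo$ $(14,12)$, truncated cube $TrCub$ $(24,14)$, rhombicuboctahedron $RCubOc$ $(24,26)$, snub cube $SnCub$ $(24,38)$, and truncated cuboctahedron $TrCubOc$ $(48,26)$, together with their duals. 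From these I would generate the remaining needed $f$-vectors using the operations tabulated in Corollary~\ref{cor:operations} — careful stacking/cutting $\CS_{k,m},\CC_{k,m}$, the prism operations $\RP,\TP,\BP,\HP,\THP$ and their duals — always over a facet or vertex whose stabilizer order divides the relevant index so that $|G|/|H|$ has the right value. For instance, stacking on a triangular facet of $Oc$ (stabilizer of order $3$, so $m=8$) gives a transition $+8(1,2)$; a prism over a square facet of $Cub$ (stabilizer order $4$, $m=6$) gives $+6(4,4)=+(24,24)$; these moves let one walk within a residue class and between the three cone-generators $v_1,v_2,v_3$.

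The main obstacle, as in the $\T$ and $\I$ cases, will be producing $\Oc$-symmetric polytopes with the \emph{smallest} $f$-vectors in each residue class — the $v_1(p,q)$ entries — since the constructive operations only increase $f_0$ and $f_2$. Concretely, the residue classes $(6,8)$ and its neighbors are anchored by $Oc$ and $Cub$ themselves, and $(12,14)$ by $CubOc$; but classes like $(8,18)$ or $(6,20)$ near the bottom of the cone may require an explicit ad hoc polytope (given as a convex hull of a few $\Oc$-orbits, in the style of the $Dih_2(\cdot,\cdot)$ examples) or a clever two-polytope LR/RL-certificate rather than a single base polytope. Checking that each such explicitly defined hull really has the claimed combinatorics and the required type (simple vertices with trivial stabilizer, or simplicial facets with trivial stabilizer) is the delicate part; everything downstream is the mechanical propagation via Corollary~\ref{cor:base_cone}. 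I would defer the full $18$-entry table to an appendix, exactly as was done for $\T$ in Table~\ref{tab:tetra}, and in the body of the proof simply write: ``Denote by $\F'$ the right-hand side of the assertion. The inclusion $\F(\Oc)\subset\F'$ follows from Lemma~\ref{lem:f_vectors_mod_n} as above. For $\F'\subset\F(\Oc)$, consult the certificate table in the appendix and apply Corollary~\ref{cor:certificates_work}.''
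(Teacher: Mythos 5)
Your proposal follows exactly the paper's two‑sided argument: the congruence restriction comes from Lemma \ref{lem:f_vectors_mod_n} with orbit sizes $6$, $8$ and $12$ (and your observation that no extra inequality survives is correct — each flag‑count bound, e.g. $2f_0-f_2\geq 22$ for the class $(0,2)$, is already forced by the congruence together with $2f_0-f_2\geq 4$), and the reverse inclusion is obtained via Corollary \ref{cor:certificates_work} applied to an eighteen‑entry certificate table. The only part you leave unexecuted is that table itself, which in the paper (Table \ref{tab:octa}) indeed requires, besides the Archimedean/Catalan starting solids and the operations of Corollary \ref{cor:operations}, the ad hoc $\Oc$-symmetric polytopes $Oct(30,44)$, $Oct(32,42)$, $Oct(54,32)$, $Oct(60,38)$ and $Oct(72,50)$ that you correctly anticipate would be needed for the small $f$-vectors.
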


Without loss of generality we may consider $\Oc$ as the group generated by

\begin{equation*}
\begin{pmatrix}
0 & -1 & 0 \\
1& 0 & 0 \\
0& 0& 1
\end{pmatrix},
\begin{pmatrix}
1 & 0 & 0 \\
0 & 0 & -1 \\
0& 1& 0
\end{pmatrix}
\end{equation*}

and state the following $\O$ symmetric polytopes in an explicit way (see also Figure~\ref{fig:special_oct}):

\begin{itemize}

\item $Oct(72,50)=\conv(\Oc\cdot \{(1,2,3) , (3,2,1), (1,0,4)\})$ 

\item $Oct(30,44)=\conv(\Oc\cdot \{(4,0,0) , (-1,2,2)\})$ 

\item $Oct(32,42)=\conv(\Oc\cdot \{(2,2,2) , (0,1,3)\})$

\item $Oct(60,38)=\conv(\Oc\cdot \{(8,8,0) , (1,7,5), (1,7,-5)\})$

\item $Oct(54,32) = (\CS_{3,8}(SnCub))^\vee$

\end{itemize}

\begin{figure}
\begin{center}
\includegraphics[scale=0.15]{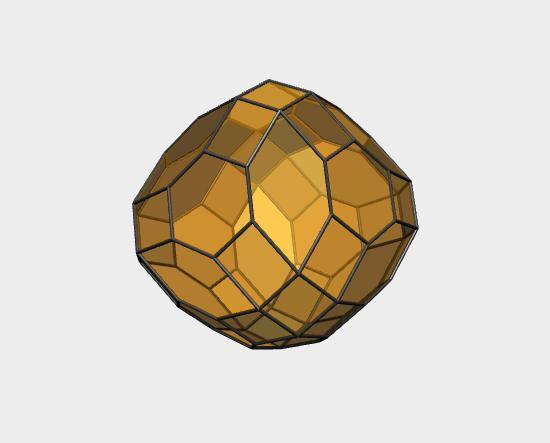}
\includegraphics[scale=0.15]{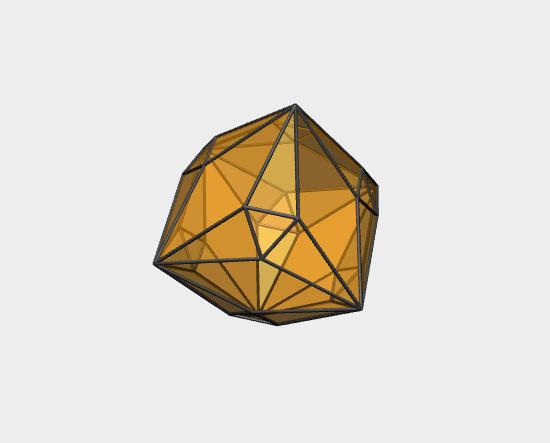}
\includegraphics[scale=0.15]{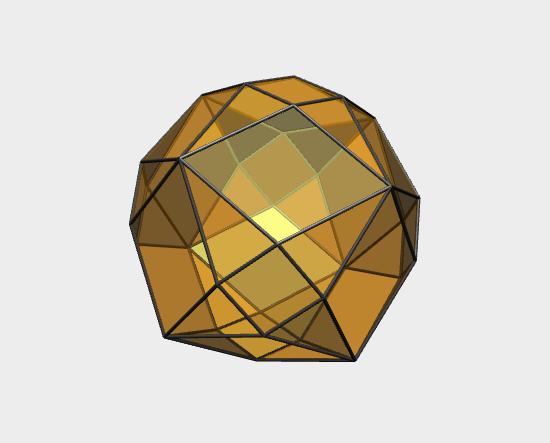}
\includegraphics[scale=0.15]{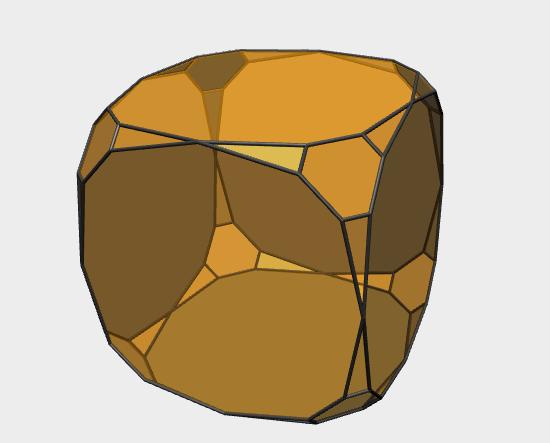}
\end{center}
\caption{The $\Oc$-symmetric polytopes $Oct(72,50)$, $Oct(30,44)$, $Oct(32,42)$, $Ot(60,38)$ and $Oct(54,32)$}\label{fig:special_oct}
\end{figure}

\begin{proof}
Denote by $\F'$ the right hand side of the assertion. 
By Lemma \ref{lem:f_vectors_mod_n} we have
$$\F(\Oc)\subset \{ f\in\F \ : \ (f \mod 12)\in \{(0,6)\}^\diamond + \{(0,8)\}^\diamond + \{(0,0),(0,12)\}^\diamond\}$$
which is equivalent to $\F(\Oc)\subset \F'$.
To see that $\F'\subset\F(\Oc)$ consider Corollary \ref{cor:certificates_work} and Table \ref{tab:octa} in the appendix.

\end{proof}

Next, we consider the icosahedral rotation group $\I$.
Take a given icosahedron with barycenter $0$.
The group $\I$ contains five-fold rotations around axes through opposing vertices. 
Furthermore, it contains three-fold rotations around axes through opposing facets and flips around axes through the midpoints of edges.

Therefore, the group $\I$ has three different non-regular orbits. One non-flip of size $20$ consisting of the rays belonging to the threefold rotation axes. Furthermore, there is a non-generate orbit of size $12$ consisting of the rays belonging to the five-fold rotations. Lastly, there is a flip-orbit consisting of the $30$ rays belonging to the flip axes.

\begin{theorem}
We have
\begin{align*}
\F(\I) = \{\f\in\F \ : \ \f\equiv (0,2),(0,32),(12,20), (12,50), (20,42),(30,32)\mod 60\}^\diamond.
\end{align*}
\end{theorem}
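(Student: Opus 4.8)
The plan is to establish the two inclusions $\F(\I)\subseteq\F'$ and $\F'\subseteq\F(\I)$, where $\F'$ denotes the right-hand side of the claim, following the same two-step template already used for $\T$ and $\Oc$. Here $|G|=60$, so $n=60$. As noted in the paragraph preceding the statement, $\I$ has exactly three non-regular ray-orbits: the orbit $O_{12}$ of the twelve rays on the five-fold axes (stabilizer $\C_5$), the orbit $O_{20}$ of the twenty rays on the three-fold axes (stabilizer $\C_3$), and the flip orbit $O_{30}$ of the thirty rays on the two-fold axes. In the language of Lemma~\ref{lem:f_vectors_mod_n} we thus have $\Onreg=\{O_{12},O_{20}\}$ and $O_2=\{O_{30}\}$.

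For $\F(\I)\subseteq\F'$ I would feed this orbit data into Lemma~\ref{lem:f_vectors_mod_n}: for every $\I$-symmetric polytope $P$,
\[
(\f(P)\bmod 60)\ \in\ \Big(\{(12,0)\}^\diamond+\{(20,0)\}^\diamond+\{(0,0),(30,0)\}^\diamond\ \bmod 60\Big),
\]
and a short Minkowski-sum computation shows the right-hand side equals $\{(0,2),(0,32),(12,20),(12,50),(20,42),(30,32)\}^\diamond$, i.e.\ $\F'$. In contrast to $\C_n$ and $\Dih_d$, no inequality needs to be added: in each of the six residue classes the occupation of the orbits by vertices/edges/facets is forced (for instance $\f\equiv(0,32)$ forces facets on the five-fold and three-fold axes, hence at least $5$, resp.\ $3$, vertices per such facet), and the resulting flag count — combined with Theorem~\ref{thm:Euler_Steinitz} — yields a bound such as $2\f_0-\f_2\ge 28$; but this bound is already implied by $\f\in\F$ together with $2\f_0-\f_2\equiv 28\pmod{60}$. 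So Lemma~\ref{lem:f_vectors_mod_n} alone gives $\F(\I)\subseteq\F'$.

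For the reverse inclusion I would apply Corollary~\ref{cor:certificates_work}. Since $\F(\I)^\diamond=\F(\I)$, it suffices to give, for each of the six representatives $(p,q)\in\{(0,2),(0,32),(12,20),(12,50),(20,42),(30,32)\}$ and each $k\in\{1,2,3\}$, a certificate for $(p,q)+v_k(p,q)$ in the sense of Lemma~\ref{lem:finite_cone_distribution}. Every one of these representatives satisfies $p\le q$, so $m=0$ and the eighteen target vectors are among $(p,q)+(0,0)$, $(p,q)+(60,60)$, $(p,q)+(120,120)$ and the ``shifted'' versions $(p,q)+(120,60)$, $(p,q)+(180,120)$, $(p,q)+(240,180)$ arising in the degenerate cases $2p-q<4$. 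I would collect the eighteen certificates in a table in the appendix, built as in Tables~\ref{tab:tetra} and~\ref{tab:octa} from (i) the icosahedrally symmetric solids of Notation~\ref{not:special_polytopes} — $Ico=(12,20)$, $ID=(30,32)$, $TrI=(60,32)$, $RID=(60,62)$, $SnDo=(60,92)$, $TrID=(120,62)$ and their $\I$-symmetric duals; (ii) a short list of explicitly given $\I$-symmetric polytopes with small $f$-vectors realizing the base cases not covered by (i) (to be written down by coordinates, as was done for $\Oc$); and (iii) the operations $\CS$, $\CC$, $\RP$, $\RP^\vee$, $\TP$, $\TP^\vee$, $\HP$, $\THP$, $\BP$ of Corollary~\ref{cor:operations}, applied to facets or vertices whose degree and stabilizer order are read off from the chosen solid (e.g.\ $\RP_{5,12}$ on a pentagonal facet with stabilizer $\C_5$ adds $(60,60)$, and $\CS_{3,60}$ on a simplicial facet with trivial stabilizer adds $(60,120)$).

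The main obstacle is part (ii): for the residue classes $(0,32)$, $(12,50)$ and $(20,42)$ one has $2p-q<4$, so $v_1$ is a shifted vector and one needs, e.g., $\I$-symmetric polytopes with $f$-vector $(120,92)$, $(132,110)$ and $(140,102)$ that moreover carry the type (left, right, or base) demanded by the certificate — i.e.\ simplicial facets and/or simple vertices with trivial stabilizer. Some of these can still be obtained from a standard solid by one operation (e.g.\ $\RP_{5,12}(TrI)=(120,92)$), but verifying that the result has the required genericity, and where it cannot, producing the polytope directly as a convex hull of a few $\I$-orbits and checking its combinatorics (by hand or with Sage/Polymake), is the delicate and lengthy part. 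A secondary, purely bookkeeping obstacle is confirming, for all eighteen table entries, that the preconditions of the operations of Corollary~\ref{cor:operations} are met by the polytope they act on — the correct facet/vertex degree, the correct stabilizer order, and (for $\HP$, $\THP$) a stabilizer without reflections. Once the table is complete, Corollary~\ref{cor:certificates_work} finishes the proof.
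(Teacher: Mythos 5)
Your overall strategy coincides with the paper's: Lemma \ref{lem:f_vectors_mod_n} applied to the orbit data of $\I$ (two non-flip orbits of sizes $12$ and $20$, one flip orbit of size $30$) gives $\F(\I)\subseteq\F'$, and your side remark that the flag-count inequalities are absorbed by the congruences (so that, unlike for $\C_n$ and $\Dih_d$, no extra inequality survives) is correct. The reverse inclusion via Corollary \ref{cor:certificates_work} and a table of eighteen certificates assembled from the solids of Notation \ref{not:special_polytopes}, a handful of explicitly constructed $\I$-orbit polytopes, and the operations of Corollary \ref{cor:operations} is exactly what the paper does in Table \ref{tab:ico}.

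There is, however, a concrete error in your application of Lemma \ref{lem:finite_cone_distribution} that would leave part of $\F'$ uncertified. You assert that $p\le q$ forces $m=0$; but $m$ is the smallest multiple of $n$ with $m\ge 4+p-2q$, and this can be negative. For $(p,q)\in\{(0,32),(12,50),(20,42)\}$ one has $4+p-2q\le -60$, hence $m=-60$. With the correct value, the three targets for $(0,32)$ are $(60,32)$, $(120,92)$, $(60,92)$, whereas your computation yields $(120,92)$, $(60,92)$, $(120,152)$; the needed shift $(60,0)$ does not even occur in your list of six candidate shifts. Since $(120,152)=(60,32)+(60,120)$, your third cone is a proper subcone of the correct first one, and the points $(60,32)+b\cdot(120,60)$, $b\ge 0$ --- beginning with $f(TrI)=(60,32)$ itself --- are never certified. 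The same failure occurs for $(12,50)$ and $(20,42)$, whose true minimal targets $(72,50)$ and $(80,42)$ are precisely the ones the paper must realize by the explicitly constructed polytopes $Ico(72,50)$ and $Ico(80,42)$. So for three of the six residue classes your eighteen certificates would be attached to the wrong vectors and the inclusion $\F'\subseteq\F(\I)$ would not follow. Once $m$ is computed correctly, your plan (including the acknowledged need to construct and verify a few ad hoc $\I$-symmetric polytopes) matches the paper's proof.
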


Let $ \Phi= \frac{1+\sqrt{5}}{2} $ be the golden ratio. Without loss of generality we consider $\I$ as the matrix group generated by

\begin{align*}
\begin{pmatrix}
-1 & 0 &  0 \\
0 & -1 & 0\\
0 & 0 & 1
\end{pmatrix},
\begin{pmatrix}
0 & 0 & 1 \\
1 & 0 & 0\\
0 & 1 & 0
\end{pmatrix},
\begin{pmatrix}
\frac 1 2  & \frac 1 2 \Phi - \frac 1 2 &  - \frac 1 2 \Phi \\
  -\frac 1 2 \Phi + \frac 1 2 & - \frac 1 2 \Phi  & -\frac 1 2\\
- \frac 1 2 \Phi  & \frac 1 2 &   -\frac 1 2 \Phi + \frac 1 2
\end{pmatrix}
\end{align*}

and state the following $\I$ symmetric polytopes in an explicit way (see also Figure~\ref{fig:special_ico}:

\begin{itemize}

%$Ico(180,122)$ Take the convex hull of the midpoints of the edges of the $5$-gons of a Rhombicosidodecahedron and the points on a quarter of the edge between two different $5$-gons.

\item $Ico(72, 50)=\conv(\I\cdot \{(1,0,1 ), (-\frac 1 2  \Phi -\frac 1 2, 0, - \frac 1 2  \Phi  )\})$

\item $Ico(72, 110)=\conv(\I\cdot \{(0,1,\Phi ), (-\frac 1 4, \frac 3 4 \Phi + \frac 1 4 ,  \frac  3 4  \Phi - \frac 1 2  )\})$

\item $Ico(80, 42)=\conv(\I\cdot \{(1,0,1 ), (-\frac 1 2, 0, - \frac 1 2  \Phi - \frac 1 2  )\})$

\item $Ico(150, 92)=\conv(\I\cdot \{(0,0,1 ), (\frac 1 6, \frac 2 3 \Phi - \frac 2 3,  \frac 1 6  \Phi + \frac 2 3 ), (\frac 1 {12}, \frac 7 {12} \Phi - \frac 7 {12},  \frac 1 {12} \Phi + \frac 5 6)\})$

\item $Ico(180,122) =\conv(\I\cdot \{(\frac{1}{2} \Phi + \frac 1 4, -\frac 1 4 \Phi +\frac 1 4, \frac 1 4  \Phi + \frac 1 2 ), (-\frac 1 6  \Phi +1, - \frac 1 6,\frac 1 6  \Phi + \frac 5 6 ),$\\
$(-\frac 1 3 \Phi +1 , -\frac 1 3,  \frac 1 3  \Phi + \frac 2 3 )\})$
\end{itemize}

\begin{figure}
\begin{center}
\includegraphics[scale=0.2]{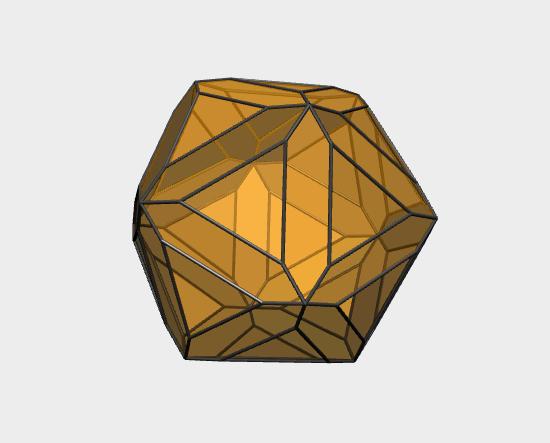}
\includegraphics[scale=0.2]{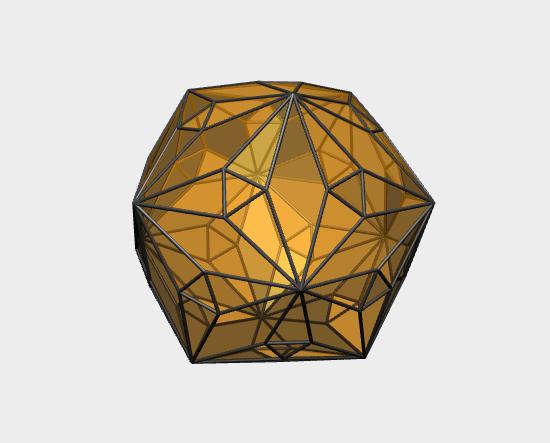}
\includegraphics[scale=0.2]{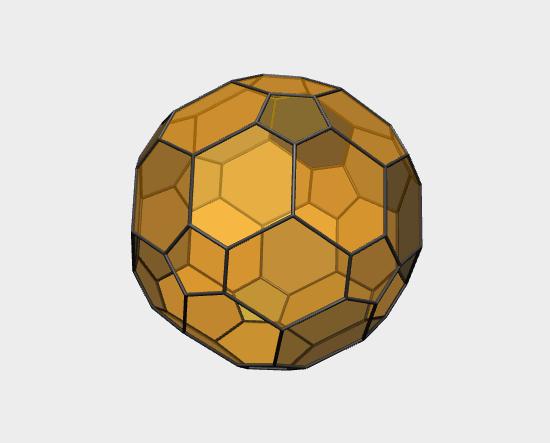}
\includegraphics[scale=0.2]{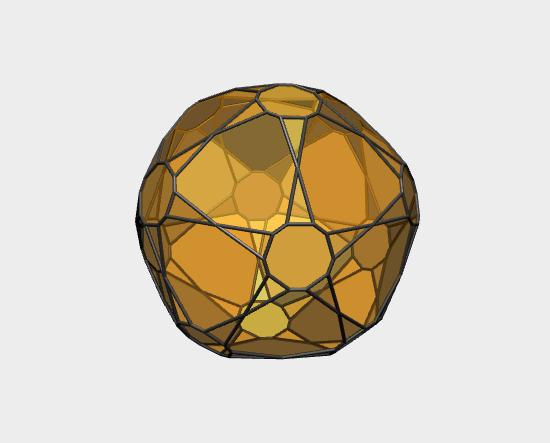}
\includegraphics[scale=0.2]{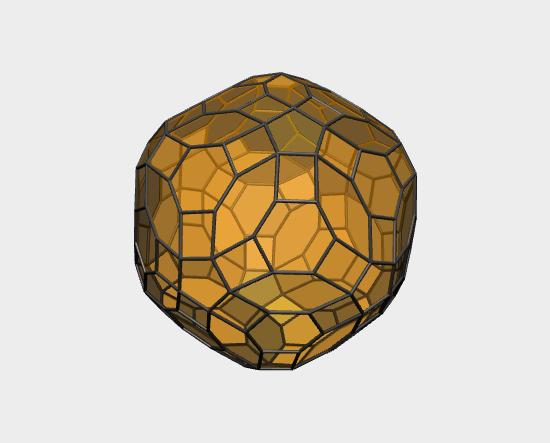}
\end{center}
\caption{The $\I$-symmetric polytopes $Ico(72,50)$, $Ico(72,110)$, $Ico(80,42)$, $Ico(150,92)$ and $Ico(180,122)$}\label{fig:special_ico}
\end{figure}

\begin{proof}

Denote by $\F'$ the right hand side of the assertion.
By Lemma \ref{lem:f_vectors_mod_n} we therefore have
$$\F(\I)\subset \{ f\in\F \ : \ (f\mod 60) \in \{(0,20)\}^\diamond + \{(0,12)\}^\diamond + \{(0,0),(0,30)\}^\diamond\},$$
which is equivalent to $\F(\I)\subset \F'$.
To see that $\F'\subset\F(\I)$ consider Corollary \ref{cor:certificates_work} and Table \ref{tab:ico} in the appendix.

\end{proof}

Now, consider the rotary reflection group $G = \G_d$. It is generated by the product $\Theta\cdot \sigma$ where $\Theta$ is a $2d$-fold rotation and $\sigma$ is a reflection orthogonal to the rotation axis of $\Theta$. The order of $\G_d$ is $n=2d$. The group has a non-regular orbit of size two on the rotation axis. All other orbits are regular.
We need no further polytopes to derive the following result:

\begin{theorem}
We have,
$$\F(\G_d) = \{\f\in\F \ : \ \f\equiv (0,2) \mod n\}^\diamond \textnormal{ for } d > 2.$$
\end{theorem}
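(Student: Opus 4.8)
The plan is to follow the same two-step blueprint used for the rotation groups above: first use the orbit structure to show $\F(\G_d)\subseteq\F'$, where $\F'$ is the right-hand side, and then use certificates to show $\F'\subseteq\F(\G_d)$. For the outer inclusion I would describe the ray-orbits of $\G_d=\langle\Theta\sigma\rangle$, where $\Theta$ is the $2d$-fold rotation and $\sigma$ the reflection orthogonal to its axis. The rotation subgroup is $\langle\Theta^2\rangle$, cyclic of order $d$, and every improper element of $\G_d$ interchanges the two rays of the axis. Hence the unique non-regular ray-orbit is the pair of axial rays, its stabilizer is $\langle\Theta^2\rangle$ of order $d>2$, so it is \emph{not} a flip orbit. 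In the notation of Lemma~\ref{lem:f_vectors_mod_n} this means $\Onreg$ is this single orbit of size $2$ and $O_2=\varnothing$, so $(f(P)\bmod n)\in(\{(2,0)\}^\diamond\bmod n)$ for every $\G_d$-symmetric $P$; since $\F(\G_d)$ is $\diamond$-invariant this gives $\F(\G_d)\subseteq\F'$. (The hypothesis $d>2$ is essential: for $d=2$ the axial orbit is itself a flip orbit, which is exactly why $\F(\G_2)$ has a different shape.)

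For the reverse inclusion I would apply Corollary~\ref{cor:certificates_work} with the single residue $(p_1,q_1)=(0,2)$, which is legitimate since $0\le 0\le 2<n=2d$. Because $n=2d\ge 6$, Lemma~\ref{lem:finite_cone_distribution} gives $m=0$ and $v_1(0,2)=(2n,n)$, $v_2(0,2)=(n,n)$, $v_3(0,2)=(2n,2n)$, so it remains to exhibit certificates for the three vectors $(2n,n+2)$, $(n,n+2)$ and $(2n,2n+2)$.

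Both primitives I need are already available. The regular $d$-antiprism $TPri_d$ is left type, has $f=(n,n+2)$, and has a $d$-gon facet with stabilizer $\langle\Theta^2\rangle$ of order $d$; the regular $2d$-gon prism $Pri_{2d}$ is right type, has $f=(2n,n+2)$, and has two $2d$-gon cap facets of stabilizer of order $d$. Since $|\G_d|/d=2$, operations on these facets carry the multiplier $2$. Then: for $(2n,2n+2)$ a B-certificate works, namely the base polytope $\RP_{d,2}(TPri_d)$ (base by Lemma~\ref{lem:RP}); for $(2n,n+2)$ I would use an RL-certificate with $P_R=Pri_{2d}$ and $P_L=\RP_{d,2}^{2}(TPri_d)$, again base (hence left type) with $f=(3n,3n+2)$; and for $(n,n+2)$ an LR-certificate with $P_L=TPri_d$ and $P_R=\CC_{2d,2}(Pri_{2d}^\vee)=(\CS_{2d,2}(Pri_{2d}))^\vee$, a right-type polytope with $f=(3n,2n+2)$. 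Assembling these into the usual certificate table and invoking Corollary~\ref{cor:certificates_work} then finishes the proof.

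The outer bound and the certificates for $(2n,2n+2)$ and $(2n,n+2)$ are routine; the real obstacle is the certificate at $(n,n+2)=(2d,2d+2)$. Since the only non-regular ray-orbit of $\G_d$ has size $2$, any $\G_d$-symmetric polytope with exactly $2d$ vertices has them forming a single regular orbit, hence all of the same degree, and Euler's relation forces that degree to be $4$ (degree $3$ gives $f_2=d+2$, degree $\ge 5$ gives $f_2\ge 3d+2$). So no such polytope has a simple vertex, and in particular there is neither a base polytope nor a right-type polytope with $f$-vector $(2d,2d+2)$ — which excludes both a B-certificate and an RL-certificate at that point. One is therefore forced to produce a small right-type polytope with the larger value $(3n,2n+2)=(6d,4d+2)$; the construction $\CC_{2d,2}(Pri_{2d}^\vee)=(\CS_{2d,2}(Pri_{2d}))^\vee$ does this, and the care required is in checking that it is $Pri_{2d}$ and not $Pri_d$ that carries $\G_d$-symmetry, that its cap facet has stabilizer exactly $\langle\Theta^2\rangle$ of order $d$, and hence that $\CS_{2d,2}$ is the correct operation, producing $f=(2n+2,3n)$ before dualising.
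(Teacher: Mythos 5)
Your proof is correct and follows essentially the same route as the paper: the outer bound via Lemma \ref{lem:f_vectors_mod_n} with the single size-two non-flip orbit, and certificates at $(2n,n+2)$, $(n,n+2)$, $(2n,2n+2)$ built from $Pri_{2d}$, $TPri_d$ and $\RP_{d,2}$. The only (immaterial) difference is the right-type polytope with $f=(3n,2n+2)$, where the paper takes the double prism $DPri_{2d}$ while you take $(\CS_{2d,2}(Pri_{2d}))^\vee$; both are valid.
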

\begin{proof}
Denote by $F'$ the right hand side of the assertion. By Lemma \ref{lem:f_vectors_mod_n} we have $F(\G_d)\subset F'$.
To see that $F' \subset F(\G_d)$ consider the following table:

\tcs
\begin{tabular}{cc|ccc}
\hspace{1em}$(p,q)$ & \ \hspace{0.7cm} & $v_1(p,q)$ & $v_2(p,q)$ & $v_3(p,q)$\\
\hline
\hspace{1em}$(0,2)$ &
& \defRoot{$(2n,n+2)$}\defLR {$\RP_{d,2}^2$} {$Pri_{2d}$} \certRL
& \defRoot{$(n,n+2)$}\defLR {$TPri_d$} {$DP_{2d}$} \certLR
& \defRoot{$(2n,2n+2)$}\defT {$\RP_{d,2}(TPri_d)$} \certB \hspace{1em}\\
\end{tabular}
\end{proof}

Next, we consider the group $G_2$ where the rotation axis provides a flip-orbit of size two. As a special $G_2$-symmetric polytope we consider $G_d(16,18)$ the square orthobi cupola also known as Johnson solid $28$.

\begin{theorem}
We have
$$F(\G_2) = \{f\in \F \ : \ f\equiv (0,0),(0,2) \mod 4\}.$$
\end{theorem}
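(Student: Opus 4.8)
The plan is to prove the two inclusions separately: $F(\G_2)\subseteq F'$ from Lemma~\ref{lem:f_vectors_mod_n}, and $F'\subseteq F(\G_2)$ from Corollary~\ref{cor:certificates_work}, where $F'$ denotes the right-hand side of the theorem, read $\diamond$-symmetrised as in Theorem~\ref{thm:main}, i.e. $F' = \{f\in\F \ : \ f\equiv(0,0),(0,2),(2,0)\bmod 4\}$.

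For the first inclusion I would write $\G_2 = \langle\Theta\sigma\rangle$, with $\Theta$ a $4$-fold rotation about an axis $\ell$ and $\sigma$ the reflection in the plane orthogonal to $\ell$, so that $n = |\G_2| = 4$ and $(\Theta\sigma)^2 = \Theta^2$ is the half-turn about $\ell$. The only non-regular ray-orbit of $\G_2$ is the two-element orbit of the rays of $\ell$; its stabiliser is $\{I,\Theta^2\}$, the identity together with a rotation of order $2$, so it is a flip orbit. Thus in the notation of Lemma~\ref{lem:f_vectors_mod_n} we have $\Onreg = \varnothing$ and $O_2$ equal to this single orbit of size $2$, and the lemma gives $(f(P)\bmod 4)\in\{(0,0),(2,0)\}^\diamond = \{(0,0),(0,2),(2,0)\}$ for every $\G_2$-symmetric polytope $P$. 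This is precisely $F(\G_2)\subseteq F'$.

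For $F'\subseteq F(\G_2)$ I would apply Corollary~\ref{cor:certificates_work} with the residue representatives $(p_1,q_1) = (0,0)$ and $(p_2,q_2) = (0,2)$ and $n = 4$, and evaluate the vectors $v_1,v_2,v_3$ of Lemma~\ref{lem:finite_cone_distribution}. For $(0,0)$ one gets $m = 4$ and the targets $(4,4),(8,8),(12,12)$; for $(0,2)$ one gets $m = 0$ and the targets $(8,6),(12,10),(8,10)$. It therefore suffices to exhibit, for each of these six vectors, a certificate with respect to $\G_2$ in the sense of Section~\ref{sec:certificates}. I would build these from small solids whose full symmetry group contains the rotoreflection $\G_2 = \langle S_4\rangle$: for $(4,4)$, the regular tetrahedron $Tet$, on which $\G_2\leq T_d$ acts freely on vertices and facets, hence a $\G_2$-base polytope (B-certificate); for $(8,6)$, the cube $Cub$ (R-type, $\G_2$ free on its degree-$3$ vertices) together with the cuboctahedron $CubOc$ with $f(CubOc) = (12,14) = (8,6)+(n,2n)$ (L-type, $\G_2$ with trivial stabiliser on a triangular facet), an RL-certificate; for $(8,8)$, the gyrobifastigium $J_{26}$, which has $D_{2d}\supseteq\G_2$ and whose four degree-$3$ vertices and four triangular facets each form a single free $\G_2$-orbit, hence a $\G_2$-base polytope (B-certificate); for $(12,10)$, the octahedron with its two axial (degree $4$, stabiliser order $2$) vertices carefully cut, $\CC_{4,2}(Oc)$ with $f = (6,8) + 2(3,1) = (12,10)$ (R-type), paired with the square orthobicupola $G_2(16,18) = J_{28}$, which is L-type with $f = (16,18) = (12,10)+(n,2n)$, an RL-certificate. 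For the remaining targets $(12,12)$ and $(8,10)$ I would write down explicit $\G_2$-symmetric coordinate polytopes with these $f$-vectors — for $(12,12)$, an elongated gyrobifastigium (insert a square prism between the two fastigia), which is again $\G_2$-base; for $(8,10)$, a suitable convex hull of two $\G_2$-orbits — together with their duals and careful stackings/cuttings (Corollary~\ref{cor:operations}) to complete a B- or LR-certificate, and collect everything in a table in the style of the preceding proofs.

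The main obstacle is the last step: finding $\G_2$-symmetric polytopes with the small $f$-vectors $(8,8)$, $(12,12)$, $(8,10)$ and checking the type conditions of Definition~\ref{def:left_right_type} for every polytope used. The point peculiar to $\G_2$ — in contrast to the rotation groups treated earlier — is that the twisted prism constructions of Section~\ref{sec:constructions} (e.g. antiprisms over a square) are symmetric only under the larger rotary-reflection group $\G_4 = \langle S_8\rangle$, which does not contain the order-$4$ rotoreflection $S_4$ generating $\G_2$; one must therefore use the `aligned' bicupola/gyrobifastigium-type constructions, which keep the horizontal mirror and hence $\G_2$, or hand-built coordinate examples, and verify the symmetry and the triviality of the relevant face stabilisers case by case, since no uniform argument covers them.
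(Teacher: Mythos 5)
Your overall architecture is exactly the paper's: Lemma~\ref{lem:f_vectors_mod_n} applied to the single flip-orbit $\{r,-r\}$ on the rotoreflection axis gives $F(\G_2)\subseteq F'$, and Corollary~\ref{cor:certificates_work} reduces the reverse inclusion to certificates at the same six targets $(4,4)$, $(8,8)$, $(12,12)$, $(8,6)$, $(12,10)$, $(8,10)$. Your entries at $(4,4)$ and $(8,6)$ coincide with the paper's ($Tet$; $Cub$ paired with $CubOc$), and your substitutes at $(8,8)$, $(12,12)$ and $(12,10)$ (gyrobifastigium, elongated gyrobifastigium, $\CC_{4,2}(Oc)$ paired with the square orthobicupola) are correct base resp.\ R/L-type polytopes for $\G_2=\langle S_4\rangle$; the paper instead reuses $DT$, $Dih_2(12,12)$ and $DPri_4$ at those spots.

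The one genuine gap is the target $(8,10)$: you never exhibit the required left-type polytope, only a promissory note (``a suitable convex hull of two $\G_2$-orbits''). This is precisely the entry where something nontrivial must be constructed, because of the obstruction you yourself point out: the square antiprism $TPri_4$ --- which is what the paper's table offers as the left-type polytope for $(8,10)$ --- has symmetry group $D_{4d}$, containing $S_8$ but no $S_4$, so it is not $\G_2$-symmetric; your observation therefore exposes a defect in the paper's certificate rather than discharging the case. The gap is fillable along the lines you gesture at: take a $C_2$-symmetric but not $C_4$-symmetric quadrilateral $Q$ (a nonsquare parallelogram centered on the axis) in the plane $z=1$ and set $P=\conv\bigl(Q\cup S_4\cdot Q\bigr)$; the bottom copy is the top copy rotated by $90^{\circ}$, so for generic $Q$ the lateral surface consists of $8$ triangles in two free orbits, giving a left-type $\G_2$-symmetric polytope with $f(P)=(8,10)$, which together with the right-type polytope $\RP_{4,2}(Cub)$ of $f$-vector $(16,14)$ completes the LR-certificate. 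Until such a polytope (or some alternative certificate at $(8,10)$) is written down and its $S_4$-invariance and face stabilizers are checked, the inclusion $F'\subseteq F(\G_2)$ is not established on the residue class $(0,2)$.
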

\begin{proof}
Denote by $F'$ the right hand side of the assertion. By Lemma \ref{lem:f_vectors_mod_n} we have $F(\G_2)\subset F'$. To see that $F'\subset F(\G_2)$ consider the following table:

\tcs
\begin{tabular}{cc|ccc}
\hspace{1em} $(p,q)$ & \ \hspace{0.7cm} & $v_1(p,q)$ & $v_2(p,q)$ & $v_3(p,q)$\\
\hline
\hspace{1em} $(0,0)$ &
& \defRoot{$(4,4)$} \defT {$Tet$} \certB
& \defRoot{$(8,8)$} \defT {$DT$} \certB
& \defRoot{$(12,12)$} \defT {$Dih_2(12,12)$} \certB \\
\hline
\hspace{1em} $(0,2)$ &
& \defRoot{$(8,6)$} \defLR {$CubOc$} {$Cub$} \certRL
& \defRoot{$(12,10)$} \defLR {$G_d(16,18)$} {$DPri_4$} \certRL
& \defRoot{$(8,10)$} \defLR {$TPri_4$} {$\RP_{4,2}(Cub)$} \certLR\hspace{1em}\\
\end{tabular}

\end{proof}

The group $\G_1$ is the point reflection at the origin. As a matrix group it is generated by the negative identity matrix. 

\begin{theorem}
We have
$$F(\G_1) =  \{\f\in\F \ : \ \f\equiv (0,0) \mod 2\}^\diamond\setminus \{(4,4),(6,6)\}.$$
\end{theorem}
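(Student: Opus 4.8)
The plan is to establish the two inclusions $\F(\G_1)\subseteq\F'$ and $\F'\subseteq\F(\G_1)$, where $\F'$ denotes the claimed right-hand side. For $\F(\G_1)\subseteq\F'$ I would first note that $\G_1=\{I,-I\}$ has order $2$ and acts freely on the set of all rays of $\R^3$ (a ray and its image under $-I$ are always distinct), so $\G_1$ has no non-regular ray-orbits and Lemma~\ref{lem:f_vectors_mod_n} gives $f(P)\equiv(0,0)\bmod 2$ for every $\G_1$-symmetric polytope $P$. To exclude $(4,4)$ and $(6,6)$ I would use that the barycenter of such a $P$ is an interior point, so no facet hyperplane contains it and hence no facet of $P$ contains an antipodal pair of vertices; since the vertex set $\{\pm v_1,\dots,\pm v_k\}$ spans $\R^3$ we get $f_0(P)\ge 6$, and if $f_0(P)=6$ then each facet meets each of the three vertex-pairs at most once, so $P$ is simplicial and $f_2(P)=2f_0(P)-4=8$ by Theorem~\ref{thm:Euler_Steinitz}. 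In fact, since each facet must then be one of the $2^3$ triangles obtained by picking a vertex from each pair, such a $P$ is combinatorially an octahedron, all of whose vertices have degree $4$; I record this because it shows there is no right type $\G_1$-symmetric polytope with $f$-vector $(6,8)$.

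For $\F'\subseteq\F(\G_1)$ I would reduce to finitely many certificates. With $n=|\G_1|=2$ one has $n\Cf=(2,4)\N+(4,2)\N$, and Lemma~\ref{lem:finite_cone_distribution} (with $p=q=0$, so $m=4$ and $v_1=(4,4)$, $v_2=(6,6)$, $v_3=(8,8)$) writes $\{f\in\F:f\equiv(0,0)\bmod 2\}$ as the union of the three cones $(4,4)+n\Cf$, $(6,6)+n\Cf$, $(8,8)+n\Cf$. A direct check in $2\Z^2$ shows that $(4,4)$ and $(6,6)$ each lie in just one of these cones, at its apex, and that $((4,4)+n\Cf)\setminus\{(4,4)\}=((6,8)+n\Cf)\cup((8,6)+n\Cf)$ while $((6,6)+n\Cf)\setminus\{(6,6)\}=((8,10)+n\Cf)\cup((10,8)+n\Cf)$; consequently $\F'=\bigcup\{\,v+n\Cf:v\in\{(6,8),(8,6),(8,8),(8,10),(10,8)\}\,\}$. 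Since $\F(\G_1)^\diamond=\F(\G_1)$ and $n\Cf$ is $\diamond$-invariant, by Theorem~\ref{thm:certificates} it then suffices to give $\G_1$-certificates for the three vectors $(6,8)$, $(8,8)$ and $(8,10)$, the dual certificates handling $(8,6)$ and $(10,8)$.

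To produce these certificates I would use, besides the centrally symmetric octahedron $Oc$ ($f=(6,8)$, left type), cube $Cub$ ($f=(8,6)$, right type) and cuboctahedron $CubOc$ ($f=(12,14)$, left type) from Notation~\ref{not:special_polytopes}, the pentagonal antiprism ($f=(10,12)$, left type) and its dual ($f=(12,10)$, right type) and the heptagonal antiprism ($f=(14,16)$), all centrally symmetric; and a handful of explicit centrally symmetric polytopes obtained by small perturbations: $G_1(8,8)$, a cube with an antipodal pair of vertices pushed slightly inward so that two opposite square facets split into pairs of triangles (left type); $G_1(8,10)$, the same with one of these quadrilateral pairs additionally bent into triangles; $G_1(10,14)=\conv(Cub\cup\G_1\!\cdot\!\{v\})$ with $v$ just beyond one edge of the cube, so $f=(8,6)+2(1,4)=(10,14)$ (left type); $G_1(10,10)$, an irregular pentagonal antiprism in which one adjacent up/down pair of side triangles and its antipode are flattened into quadrilaterals, so $f=(10,12)-(0,2)$ (left type); and $G_1(12,12)$, a perturbed cuboctahedron in which a triangle–square facet pair and its antipode are flattened into pentagons, so $f=(12,14)-(0,2)$ (right type). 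With these I would assemble: for $(6,8)$ the LR-certificate $(P_L,P_R)=(Oc,\ G_1(10,10)^\vee)$; for $(8,8)$ the LR-certificate $(P_L,P_R)=(G_1(8,8),\ (\text{pentagonal antiprism})^\vee)$; and for $(8,10)$ the triangle-certificate $P_L=G_1(10,14)$, $P_R=G_1(12,12)$, $P=G_1(8,10)$, $Q=\text{heptagonal antiprism}$. In each case the $f$-vectors and the left/right type requirements are checked directly, and Lemma~\ref{lem:small_disc} ensures that taking convex hulls with whole $\G_1$-orbits introduces no unexpected edges or facets; Theorem~\ref{thm:certificates} then yields $\F'\subseteq\F(\G_1)$.

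I expect the main obstacle to be the explicit construction of the small centrally symmetric polytopes with the ``balanced'' $f$-vectors $(8,8)$, $(10,10)$, $(12,12)$ and $(8,10)$: unlike the ingredients for the larger groups, these are neither simplicial nor simple and do not appear among the Platonic, Archimedean or Catalan solids, so each has to be built and verified by hand — convexity, the exact face count, and the left/right type — and one has to make sure that enough of them fit together into valid certificates. A related subtlety that dictates the shape of the certificate list is the absence of any right type $\G_1$-symmetric polytope at $(6,8)$ (the octahedron, which is forced there, has only degree-$4$ vertices); this rules out a base- or RL-certificate for $(6,8)$ and forces an LR- (or triangle-) certificate, which is exactly why a centrally symmetric $(10,10)$ polytope is needed.
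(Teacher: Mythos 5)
Your proposal is correct and runs on the same machinery as the paper's proof — Lemma \ref{lem:f_vectors_mod_n} for the congruence condition, an elementary central-symmetry argument to exclude $(4,4)$ and $(6,6)$ (yours via spanning and the forced octahedral combinatorics, the paper's via disjointness of a facet and its antipode; these are essentially the same), and the certificate apparatus for the reverse inclusion — but you organize the certificate step differently. The paper keeps the three apexes $(4,4),(6,6),(8,8)$ from Lemma \ref{lem:finite_cone_distribution} and handles the two forbidden ones by triangle certificates whose top entry is left empty: for $(4,4)$ it uses $Oc$, $Cub$ and $PRefl(10,10)$; for $(6,6)$ it uses $PRefl(8,10)$, its dual and $\CC_{3,2}(PRefl(8,10))$; for $(8,8)$ the single $\G_1$-symmetric base polytope $DT$ suffices. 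You instead re-decompose the two punctured cones into shifted subcones with apexes $(6,8),(8,6),(8,10),(10,8)$ and certify those; this is logically equivalent (it is what the proof of Theorem \ref{thm:certificates} does internally with a truncated triangle certificate), but it changes which polytopes you must exhibit. In particular, your LR-certificate at $(6,8)$ forces you to produce a \emph{right type} centrally symmetric polytope with $f$-vector $(10,10)$ (your $G_1(10,10)^\vee$), where the paper only needs an arbitrary one, and your treatment of the $(6,6)$-cone uses four explicit polytopes where the paper gets by with $PRefl(8,10)$ and its derived polytopes; at $(8,8)$ the paper's single base polytope $DT$ is more economical than your LR pair. Your explicit constructions (perturbed cubes, stacking beyond antipodal edges, odd antiprisms and their flattenings) are all realizable and of the same ad hoc nature as the paper's coordinate-given $PRefl$ polytopes, though the flattening constructions $G_1(10,10)$ and $G_1(12,12)$ impose coplanarity conditions that must still be realized convexly and centrally symmetrically — the verification burden you already flag, and one small caveat is that Lemma \ref{lem:small_disc} as stated covers stacking over facets, not over edges, so $G_1(10,14)$ needs the (easy) direct check that the two added antipodal vertices do not interfere.
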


For the proof, we give the following $\G_1$-symmetric polytopes in an explicit way (see also Figure~\ref{fig:special_point_reflection}):

\begin{itemize}

\item $PRefl(8,10)=\conv(\G_1\cdot \{(5,0,0) , (0,5,0), (0,0,5),(3, -\frac 1{2},\frac 52)\})$ 

\item $PRefl(10,10)=\conv(\G_1\cdot \{(4,0,0), (0,4,0) ,(-1, 1,4), (1, -1,4) ,(2, -3,2)\})$ 

\end{itemize}

\begin{figure}
\begin{center}
\includegraphics[scale=0.2]{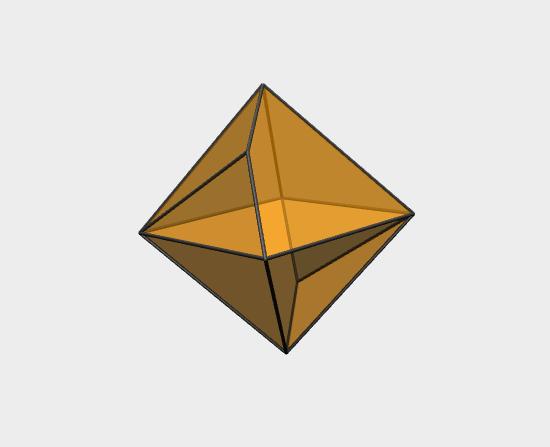}
\includegraphics[scale=0.2]{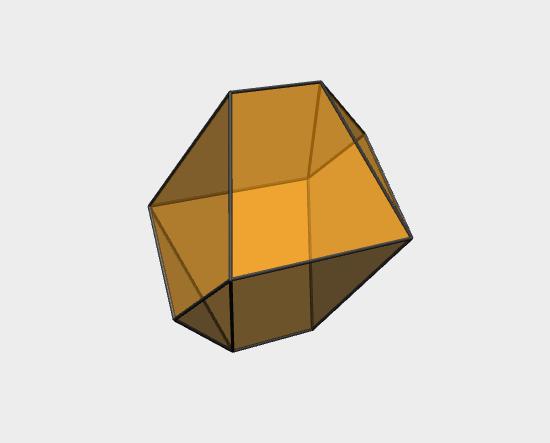}
\end{center}
\caption{The $G_1$-symmetric polytopes $PRefl(8,10)$ and $PRefl(10,10)$}\label{fig:special_point_reflection}
\end{figure}

\begin{proof}
Denote by $F'$ the right hand side of the assertion. 
First we show that $ (4,4), (6,6)\notin F(\G_1)$. Note that any facet and its reflection at the origin do not intersect. %Otherwise, the intersection and its reflection are contained in the facet und thus also the origin, which is a contradiction to the fact that the origin is in the interior of the polytope.
From that we can first conclude that a $\G_1$-symmetric polytope has at least 6 vertices. 
Secondly,  a $G_1$-symmetric polytope with 6 vertices has to be simplicial. By Theorem \ref{thm:Euler_Steinitz}, that means $2f_0-f_2=4$ which is not satisfied for $f=(6,6)$.
Together with  Lemma \ref{lem:f_vectors_mod_n} we thus have $F(\G_1)\subset F'$.

To see that $F'\subset F(\G_1)$ consider Corollary \ref{cor:certificates_work} and the following table:
\tcs
\begin{tabular}{cc|ccc}
\hspace{1em} $(p,q)$ & \ \hspace{0.7cm} & $v_1(p,q)$ & $v_2(p,q)$ & $v_3(p,q)$\\
\hline
\hspace{1em} $(0,0)$ &
& \defRoot{$(4,4)$}\defLRTX {$Cub$} {$Oc$} {$\varnothing$}{$PRefl(10,10)$}\certTri
& \defRoot{$(6,6)$}\defLRTX {$PRefl(8,10)$} {\hspace{12pt}$PRefl(8,10)^\vee$} {$\varnothing$}{$\CC_{3,2}(PRefl(8,10))$}\certTri
& \defRoot{$(8,8)$} \defT {$DT$} \certB \hspace{1em} \\
\end{tabular}

\end{proof}

This finishes the proof of Theorem \ref{thm:main}.

\section{Open questions}\label{sec:open}

Up to this point only reflection free symmetries have been discussed. 
To prove the main theorem, Theorem \ref{thm:main}, we mostly followed the characterization \ref{thm:finite_orthogonal_groups} from Grove and Benson \cite[Theorem 2.5.2]{benson1985finite_reflection_groups}. 
In order to characterize the $f$-vectors of the remaining symmetry groups, it is important to know about the contained reflections
and their arrangement. Therefore, we propose the following characterization of symmetry groups containing reflections instead:

Denote a rotation around the $z$-axis by an angle of $2\pi/d$ by $\Theta_d$ and a reflection in a plane $H$ by $\sigma_H$. Let further $z^\perp$ and $x^\perp$  be the planes through the origin orthogonal to the $z$-axis and the $x$-axis, respectively. 
Then we have the following:

\begin{theorem}\label{thm:alternative_characterization_reflection_groups}
Let $G$ be a finite orthogonal subgroup of $GL_3(\R)$.
If $G$ contains a reflection  then it is isomorphic to one of the following:
\begin{enumerate}
\item The cyclic rotation group with an additional reflection orthogonal to the rotation axis $C_d^{\perp} \simeq\left <\Theta_d, \sigma_{z^\perp} \right>$, $d\geq 1$,
\item the cyclic rotation group with an additional reflection that contains the rotation axis $C_d^{\subset} \simeq \left <\Theta_d, \sigma_{x^\perp} \right>$, $d\geq 2$,
%\item $\left <\Theta_n, \sigma_{z^\perp}, \varphi_x\right>$,
\item the cyclic rotation group containing both additional reflections \\  $C_d^{\perp,\subset}~\simeq~\left <\Theta_d, \sigma_{z^\perp}, \sigma_{x^\perp} \right>$, $d\geq 2$,
\item the rotary reflection group of order $2d$ with an additional reflection containing the rotation axis $G_d^{\subset} = \left <-\Theta, \sigma_{x^\perp}\right>$ where $\Theta = \Theta_{2d}$ if $d$ is even and $\Theta = \Theta_d$ if $d$ is odd, $d\geq 2$
\item the full tetrahedral, octahedral and icosahedral symmetry group $\hat\T$, $\hat\O$, $\hat\I$ respectively,
%\item the octahedral symmetry group $\O^\ast$,
%\item the icosahedral symmetry group $\I^\ast$,
\item the group $\T^\ast = \T\cup \{-X \ : \ X\in\T\}$.
\end{enumerate}
\end{theorem}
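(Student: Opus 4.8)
The plan is to derive the statement as a bookkeeping consequence of the classification of the purely rotational finite subgroups of $GL_3(\R)$, i.e.\ of Theorem~\ref{thm:finite_orthogonal_groups}, together with a standard extension argument and a finite case check of which resulting groups actually contain a reflection rather than merely some orientation-reversing isometry (such as $-I$ or a rotary reflection of order larger than $2$). Concretely, an orthogonal map is a reflection iff it is an orientation-reversing involution different from $-I$, and every other orientation-reversing orthogonal map is a rotary reflection of order $>2$; so ``$G$ contains a reflection'' means ``$G$ contains an orientation-reversing involution $\neq -I$'', and the whole difficulty is to see, for each candidate group, whether such an element is present.

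First I would reduce to $G\le O(3)$ as in the reflection-free case. Assuming $G$ contains a reflection $\sigma$, the homomorphism $\det\colon G\to\{\pm1\}$ is onto, so $H:=G\cap SO(3)$ has index $2$ and $G=H\sqcup\sigma H$; by Theorem~\ref{thm:finite_orthogonal_groups}, $H$ is (up to orthogonal conjugacy) one of $\C_n,\Dih_d,\T,\O,\I$. Since $-I$ is central in $O(3)$ and never lies in $SO(3)$, exactly one of two cases occurs: either $-I\in G$, and then $G=H\cup(-I)H=H\times\langle-I\rangle$; or $-I\notin G$, and then $K:=H\cup(-I)(G\setminus H)$ is a finite subgroup of $SO(3)$ with $H\trianglelefteq K$ of index $2$, and $G=H\cup(-I)(K\setminus H)$ is recovered from the pair $H\trianglelefteq K$. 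Enumerating the index-$2$ inclusions among rotation groups (using that $\T\cong A_4$ and $\I\cong A_5$ have no index-$2$ subgroup, and that $\O$ and $\I$ are not index-$2$ subgroups of any rotation group) leaves exactly $\C_n\trianglelefteq\C_{2n}$, $\C_d\trianglelefteq\Dih_d$, $\Dih_d\trianglelefteq\Dih_{2d}$ and $\T\trianglelefteq\O$. Alternatively one may simply invoke the well-known list of finite subgroups of $O(3)$ and skip this derivation.

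Next I would run through these finitely many candidates and, for each, decide whether it contains a reflection and identify it with one of items~1--6 via a generating set. In the case $-I\in G$: $\C_n\times\langle-I\rangle$ contains a reflection iff $n$ is even, in which case $-\Theta_n^{n/2}=\sigma_{z^\perp}$ and the group is $C_n^{\perp}$ (item~1, $d$ even), while for odd $n$ it is reflection-free and equals $\G_n$ from Theorem~\ref{thm:finite_orthogonal_groups}; $\Dih_d\times\langle-I\rangle$ always contains a reflection and equals $C_d^{\perp,\subset}$ for $d$ even (item~3) and $G_d^{\subset}$ for $d$ odd (item~4); and $\T\times\langle-I\rangle=\T^{\ast}$ (item~6), $\O\times\langle-I\rangle=\hat\O$, $\I\times\langle-I\rangle=\hat\I$ (item~5), each containing a reflection of the form $-R$ with $R$ a coordinate half-turn. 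In the case $-I\notin G$: the pair $\C_n\trianglelefteq\C_{2n}$ gives $C_n^{\perp}$ for odd $n$ (item~1) and, for even $n$, the reflection-free group $\G_n$ again; $\C_d\trianglelefteq\Dih_d$ gives $C_d^{\subset}$ (item~2); $\Dih_d\trianglelefteq\Dih_{2d}$ gives $C_d^{\perp,\subset}$ for odd $d$ (item~3) and $G_d^{\subset}$ for even $d$ (item~4); and $\T\trianglelefteq\O$ gives $\hat\T$ (item~5), again with a witnessing reflection $-R$. Since the conjugacy type of $H$ is rigid by Theorem~\ref{thm:finite_orthogonal_groups} and the extension data pins down $G$ up to orthogonal conjugacy, collecting exactly the cases that contain a reflection yields precisely items~1--6.

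The routine but slightly delicate part is exactly this last per-case check: one must distinguish a genuine reflection from an order-$2$ rotary reflection, i.e.\ track whether a given orientation-reversing involution is $-R$ for a half-turn $R$ about a horizontal axis (a ``vertical'' mirror) or about the principal axis ($\sigma_{z^\perp}$, a ``horizontal'' mirror) or is $-I$ (no reflection). Closely tied to this is the parity of $d$: $C_d^{\perp}$ contains $-I$ exactly when $d$ is even, so this one family is produced once in the case $-I\in G$ and once in the case $-I\notin G$ according to parity, and likewise $\Dih_d\times\langle-I\rangle$ alternates between items~3 and~4 with the parity of $d$. I would organize the verification as a short table listing, for each $H$ and each of its two extension types, a generating set of $G$, a witnessing reflection or a proof of its absence, and the matching item number; beyond Theorem~\ref{thm:finite_orthogonal_groups} nothing but elementary linear algebra is needed.
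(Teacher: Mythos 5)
Your proposal is correct, and it reaches the same finite case analysis as the paper, but by a genuinely more self-contained route. The paper's proof simply invokes the full Grove--Benson classification of finite orthogonal subgroups of $GL_3(\R)$ (their Theorem 2.5.2, which already lists all the groups $(C_3^d)^\ast$, $C_3^{2d}]C_3^d$, $(\mathcal H_3^d)^\ast$, $\mathcal H_3^{2d}]\mathcal H_3^d$, $\mathcal W]\mathcal T$, etc.), records the two key observations that a flip about $v$ negates to the reflection $\sigma_{v^\perp}$ and that $\langle -\Theta_d\rangle$ contains a reflection iff $d\equiv 2 \bmod 4$, and then gives a notational dictionary matching that list against items 1--6. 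You instead re-derive the orientation-reversing half of the classification from the rotation-group classification: the determinant homomorphism gives $H=G\cap SO(3)$ of index $2$, the centrality of $-I$ splits the analysis into the direct-product case $G=H\times\langle -I\rangle$ and the mixed case coming from an index-$2$ pair $H\trianglelefteq K$ of rotation groups, and the enumeration $\C_n\le\C_{2n}$, $\C_d\le\Dih_d$, $\Dih_d\le\Dih_{2d}$, $\T\le\Oc$ of such pairs is exactly right. Your per-case reflection check (including the parity bookkeeping that makes $C_d^\perp$ arise once in each case according to the parity of $d$, and that alternates $\Dih_d\times\langle -I\rangle$ between items 3 and 4) agrees with the paper's dictionary, and your criterion that reflections are precisely the orientation-reversing involutions other than $-I$ is the same observation the paper uses. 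What the paper's route buys is brevity, at the cost of leaning entirely on the cited classification; what yours buys is a proof that needs only the rotation-group list (Theorem \ref{thm:finite_orthogonal_groups} restricted to $SO(3)$) plus elementary linear algebra, and it makes explicit \emph{why} each group in the cited list has the stated form. The only cosmetic caveat is that you should use the conjugacy (not merely isomorphism) version of the rotation classification to pin down the extensions, as you note parenthetically; this is standard and does not affect the conclusion, which is stated only up to isomorphism.
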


\begin{proof}
%This follows from \note{Grove Benson, 2.5.2 ??} \note{continue} by observing that the negative of a rotation $\Theta$ is a reflection if and only if $\Theta$ has order two.
In \cite[Theorem 2.5.2]{benson1985finite_reflection_groups} it is shown that any finite orthogonal group consists of rotations and negatives of rotations.
Observe that $\Theta$ is a flip around an axis $v$ if and only if $-\Theta$ is a reflection on $v^\perp$.
Furthermore, $\left <-\Theta_d \right>$ contains a reflection if and only if $d\equiv 2 \mod 4$.
If $d\not\equiv 2 \mod 4$ this group has order $d$ for even $d$ and order $n = 2d$ for odd $d$.
Using this, we can compare the above list with the characterization to observe that these characterizations are equivalent.
In particular (in the notation of Benson)
\begin{enumerate}
\item corresponds to $(C_3^{d})^\ast$ for even $d$ and to $C_3^{2d} ] C_3^d$ for odd $d$,
\item corresponds to $\mathcal H_3^d ] C_3^d$,
%\item corresponds to $\mathcal H_3^{2n} ] \mathcal H_3^n$ for even $n$ and to $\C_3^{2n} ] C_3^n$ for odd $n$,
\item corresponds to $(\mathcal H_3^{d})^\ast$ for even $d$ and $\mathcal H_3^{2d} ] \mathcal H_3^d$ for odd $d$,
\item corresponds to $\mathcal H_3^{2d} ] \mathcal H_3^{d}$ for even $d$ and to $(\mathcal H_3^{d})^\ast$ for odd $d$,
\item corresponds to $\mathcal W ] \mathcal T$, $\mathcal W ^\ast$ and $\mathcal I^\ast$,
\item corresponds to $\mathcal T^\ast$,
%\item corresponds to $\mathcal H_3^{2n}]\mathcal H_3^{n}$ for $n$ odd.
\end{enumerate}
\end{proof}

By applying Lemma \ref{lem:f_vectors_mod_n} we already have the following :
\begin{enumerate}
\item $F(C_d^\perp) \subset \{f\in F \ : \ f\equiv (0,2) \mod d\}^\diamond$ for $d>2$, \\
 $F(C_2^\perp)\subset \{f\in F \ : \ f\equiv (0,0) \mod 2\}^\diamond$,\\
  $F(C_1^\perp) \subset F$ ,
\item $F(C_d^\subset) \subset \{f\in F \ : \ f\equiv (0,2),(1,1) \mod d\}^\diamond$ for $d>2$, \\
 $ F(C_2^\subset) \subset F$,
\item $F(C_d^{\perp,\subset}) \subset \{f\in F \ : \ f\equiv (0,2) \mod d\}^\diamond$, $d>2$,\\
 $F(C_2^{\perp,\subset}) \subset \{f\in F \ : \ f\equiv (0,0) \mod 2\}^\diamond$,
\item $F(\G_d^\subset) \subset \{f\in F \ : \ f\equiv (0,2) \mod 2d\}^\diamond$, $d>2$,\\
 $F(\G_2^\subset) \subset \{f\in F \ : \ f\equiv (0,0), (0,2) \mod 4\}^\diamond$
%\item $F(G_d^\subset) \subset \{f\in F \ : \ f\equiv (0,2), (0,d+2), (2,d) \mod 2d\}^\diamond$,
\item $F(\hat\T) \subset F(\T)$, $F(\hat\O) \subset F(\O)$, $F(\hat\I) \subset F(\I)$,
\item $F(\T^\ast) \subset \{f\in F \ : \ f\equiv (0,2),(0,8),(6,8) \mod 12\}^\diamond$.
\end{enumerate}

Next, we consider another related problem. For any three dimensional polytope $P$ we define its linear symmetry group by $\Symm(P) = \{ A\in \R^{3\times 3} \ : \ A\cdot P = P\}$. Furthermore, we denote
$$\overline{F(G)} = \{ f \in F \ : \ \text{there is a polytope $P$ with} f(P) = f, \Symm(P) = G\}.$$
Then $\overline{F(G)}\subset F(G)$. We conjecture that every 'large enough' $f$-vector in $F(G)$ is also contained in $\overline{F(G)}$ while there are finitely many $f$-vectors in $F(G)\backslash\overline{F(G)}$. This conjecture is indicated by \cite{Isaacs1977linear_groups_as_stabilizers_of_sets}.

%Since a given symmetry can be disrupted by perturbing some edges we conjecture that $\overline{F(G)} = F(G)$.

Another interesting problem is the problem in higher dimensions, even the traditional $f$-vector problem is very hard in four dimensions. Nevertheless, the constructions in Corollary \ref{cor:operations} and Lemma \ref{lem:f_vectors_mod_n} can be generalized for any dimension yielding an inner and outer approximation of $f$-vectors of symmetric polytopes. It is expectable that these approximations differ a lot.

To conclude this paper we summarize open problems which deserve further investigation:
\begin{enumerate}
\item what is $F(G)$ if $G$ is one of the groups described in Theorem \ref{thm:alternative_characterization_reflection_groups}?
\item What is $\overline{F(G)}$?
\item Is it possible to find good inner and outer approximations of $F(G)$ in dimension~$4$?
\item What do we know in arbitrary dimensions?
\item What kind of flag-vectors are possible for symmetric 3-polytopes?
\end{enumerate}

\section*{Acknowledgement}
We like to thank Frieder Ladisch and Michael Joswig for many helpful advices. 

\bibliography{f_vectors}
\bibliographystyle{alpha}

\newpage
\section*{Appendix}

\tcs
\begin{longtable}{cc|ccc}
\caption{Certificates for the tetrahedral rotation group $\T$.}\\
 \label{tab:tetra}
$(p,q)$  & \ \hspace{0.7cm} & $v_1(p,q)$ & $v_2(p,q)$ & $v_3(p,q)$\\
\hline
$(0,2)$ &
& \defRoot{$(24,14)$}\defLR {$\TP_{3,4}(*)$} {$TrCub$} \certRL
& \defRoot{$(12,14)$}\defLRTX {$\TP_{3,4}(*)$} {$\RP_{3,4}(TrCub)$} {$CubOc$} {$\TP_{3,4}\circ\RP_{3,4}(TrCub)$}\certTri
& \defRoot{$(24,26)$}\defLRTX {$\TP_{3,4}(\ast)$\hspace{0.5cm}} {\hspace{0.5cm}$\RP^2_{3,4}(TrCub)$} {$RCubOc$} {$\RP_{3,8}(\ast)$}\certTri\\
\hline
$(0,8)$ &
& \defRoot{$(12,8)$}\defLR {$\TP_{3,4}(*)$} {$TrTet$} \certRL
& \defRoot{$(24,20)$}\defLR {$\TP_{3,4}(\ast)$} {$\RP_{3,4}(TrTet)$} \certRL	
& \defRoot{$(12,20)$}\defLRTX {$\THP_{6,4}(TrTet)$} {\hspace{1cm}$\RP^2_{3,4}(TrTet)$} {$Ico$} {$\RP^2_{3,4}\circ\TP_{3,4}(TrTet)$}\certTri\\
\hline
$(4,4)$ &
& \defRoot{$(4,4)$}\defLRTX {$\TP_{3,4}(*)$} {$\TP^\vee_{3,4}(*)$} {$Tet$} {$\RP_{3,4}^3(*)$}\certTri
& \defRoot{$(16,16)$}\defT {$\RP_{3,4}^\vee(Tet) \ \RP_{3,4}(Tet) $} \certB
& \defRoot{$(28,28)$}\defT {$(\RP_{3,4}^\vee)^2 (Tet) \ , \ \RP_{3,4}^2(Tet) $} \certB\\
\hline
$(4,10)$ &
& \defRoot{$(16,10)$}\defLR {$\TP_{3,4}(*)$} {$\CC_{3,4}(Cub)$} \certRL
& \defRoot{$(28,22)$}\defLR {$\TP_{3,4}(*)$} {$\RP_{3,4}\circ \CC_{3,4}(Cub)$} \certRL
& \defRoot{$(16,22)$}\defLR {$\CS_{3,4}(CubOc)$} {$\RP^2_{3,4}\circ \CC_{3,4}(Cub)$} \certLR\\
\hline
$(6,8)$ &
& \defRoot{$(6,8)$} \defLRTX  {$\TP_{3,4}(\ast)$} {$\CC_{3,4}^2(RDo)$} {$Oc$} {$\RP^3_{3,4}(\ast)$} \certTri
& \defRoot{$(18,20)$} \defLR {$\TP_{3,4}(\ast)$} {$\RP_{3,4}(Oc)$} \certRL
& \defRoot{$(30,32)$} \defLR {$\TP_{3,4}(\ast)$} {$\RP_{3,4}^2(Oc)$} \certRL\\
\hline

\end{longtable}

\tcs
\begin{longtable}{cc|ccc} 
\caption{Certificates for the octahedral rotation group $\O$.}\\
\label{tab:octa}
$(p,q)$ & \ \hspace{0.7cm} & $v_1(p,q)$ & $v_2(p,q)$ & $v_3(p,q)$\\
\hline
$(0,2)$ &
& \defRoot{$(48,26)$}\defLR {$\HP_{6,8}(*)$} {$TrCubOc$} \certRL
& \defRoot{$(24,26)$}\defLRTX {$\TP_{3,8}(*)$} {$Oct(72,50)$} {$RCubOc$} {$\HP_{6,8}(Oct(72,50))$}\certTri
& \defRoot{$(48,50)$}\defLR {$\TP{3,8}(*)$} {$\RP_{3,8}(RCubOc)$} \certRL\\
\hline
$(0,14)$ &
& \defRoot{$(24,14)$}\defLR {$\TP_{3,8}(*)$} {$TrCub$} \certRL
& \defRoot{$(48,38)$}\defLR {$\TP_{3,8}(*)$} {$\RP_{3,8}(TrCub)$} \certRL
& \defRoot{$(24,38)$}\defLR {$SnCub$} {$\RP^2_{3,8}(TrCub)$} \certLR\\
\hline
$(6,8)$ &
& \defRoot{$(6,8)$}\defLRTX {$\TP_{3,8}(*)$} {$Oct(54,32)$} {$Oc$} {$\RP_{3,8}^3(\ast)$}\certTri
& \defRoot{$(30,32)$}\defT {$\RP_{3,8}^\vee(Oc) \ \RP_{3,8}(Oc) $} \certB
& \defRoot{$(54,56)$}\defT {$(\RP_{3,8}^\vee)^2(Oc) \ \RP_{3,8}^2(Oc) $} \certB\\
\hline
$(6,20)$ &
& \defRoot{$(30,20)$}\defLR {$\TP_{3,8}(*)$} {$\CC_{3,8}(RDo)$} \certRL
& \defRoot{$(54,44)$}\defLR {$\TP_{3,8}(*)$} {$\RP_{3,8}\circ\CC_{3,8}(RDo)$} \certRL
& \defRoot{$(30,44)$}\defLR {$Oct(30,44)$} {$\RP^2_{3,8}\circ\CC_{3,8}(RDo)$} \certLR\\
\hline
$(8,18)$ &
& \defRoot{$(32,18)$}\defLR {$\TP_{4,6}(*)$} {$\CC_{4,6}(RDo)$} \certRL
& \defRoot{$(56,42)$}\defLR {$\TP_{4,6}(*)$} {$\RP_{4,6}\circ\CC_{4,6}(RDo)$} \certRL
& \defRoot{$(32,42)$}\defLR {$Oct(32,42)$} {$\RP^2_{4,6}\circ\CC_{4,6}(RDo)$} \certLR\\
\hline
$(12,14)$ &
& \defRoot{$(12,14)$}\defLRTX {$\TP_{3,8}(*)$} {$Oct(60,38)$} {$CubOc$} {$\RP_{3,8}^3(\ast)$}\certTri
& \defRoot{$(36,38)$}\defLR {$\TP_{3,8}(*)$} {$\RP_{3,8}(CubOc)$} \certRL
& \defRoot{$(60,62)$}\defLR {$\TP_{3,8}(*)$} {$\RP^2_{3,8}(CubOc)$} \certRL\\
\hline

\end{longtable}

\tcs
\begin{longtable}{cc|ccc}
\caption{Certificates for the icosahedral rotation group $\I$.}\\
\label{tab:ico}
$(p,q)$  & \ \hspace{0.7cm} & $v_1(p,q)$ & $v_2(p,q)$ & $v_3(p,q)$\\
\hline
$(0,2)$ &
& \defRoot{$(120,62)$}\defLR {$\HP_{6,20}(*)$} {TrID} \certRL
& \defRoot{$(60,62)$}\defLRTX {$\TP_{3,20}(*)$} {Ico(180,122)} {$RID$} {$\RP^3_{5,12}(*)$}\certTri
& \defRoot{$(120,122)$}\defLR {$\TP_{3,20}(*)$} {$\RP_{5,12}(RID)$} \certRL\\
\hline
$(0,32)$ &
& \defRoot{$(60,32)$}\defLR {$\HP_{6,20}(*)$} {$TrI$} \certRL
& \defRoot{$(120,92)$} \defLR {$\HP_{6,20}(*)$} {$\RP_{5, 12}(TrI)$} \certRL 
& \defRoot{$(60,92)$}\defLR {$SnDo$} {$\RP^2_{5,12}(TrI)$} \certLR\\
\hline
$(12.20)$ &
& \defRoot{$(12,20)$}\defLRTX {$\TP_{3,20}(*)$} {$\TP_{5,12}^\vee(*)$} {$Ico$}{$\RP^3_{3,20}(*)$}\certTri
& \defRoot{$(72,80)$}\defLR {$\TP_{3,20}(*)$} {$\RP_{3,20}(Ico)$} \certRL
& \defRoot{$(132,140)$}\defLR {$\TP_{3,20}(*)$} {$\RP^2_{3,20}(Ico)$} \certRL\\
&\\
\hline
$(12,50)$ &
& \defRoot{$(72,50)$}\defLR {$\TP_{3,20}(*)$} {Ico(72,50)} \certRL
& \defRoot{$(132,110)$}\defLR {$\TP_{3,20}(*)$} {$\RP_{3,20}(Ico(72,50))$} \certRL
& \defRoot{$(72,110)$}\defLR {Ico(72,110)} {$\RP_{3,20}^2(Ico(72,50))$} \certLR\\
\hline
$(20,42)$ &
& \defRoot{$(80,42)$}\defLR {$\TP_{5,12}(\ast)$} {Ico(80,42)}\certRL
& \defRoot{$(140,102)$}\defLR {$\TP_{5,12}(*)$} {$\RP_{5,12}(Ico(80,42))$}\certRL
& \defRoot{$(80,102)$}\defLR {$\CS_{3,20}(RID)$} {$\TP^\vee_{3,20}(*)$}\certLR\\
\hline
$(30,32)$ &
& \defRoot{$(30,32)$}\defLRTX {$\TP_{3,20}(*)$} {Ico(150,92)} {$ID$} {$\CS_{3,60}(Ico(150,92))$}\certTri
& \defRoot{$(90,92)$}\defLR {$\TP_{5,12}(*)$} {$\RP_{5,12}(ID)$}\certRL
& \defRoot{$(150,152)$}\defLR {$\TP_{5,12}(*)$} {$\RP^2_{5,12}(ID)$}\certRL
\end{longtable}

%\setlength{\tabcolsep}{4pt}
%\begin{figure}[h]
%\begin{longtable}{ccl}
%\caption{ Exceptions: Certificates that are not semi certificates. The certificates are described by the corresponding $(p,q)$ tuple and the corresponding $v_i$ label.}\label{tab:exceptions}\\
%group \hspace{4pt} & related reflection group \hspace{4pt} & non-transferable certificates\\
%\hline
%$\C_2$ & $\C_2^\subset$ & \begin{tabular}{ll} (0,0) \phantom{$+2$} & $v_2$ \\ (0,1) & $v_2$ \end{tabular}\\
%\hline
%$\Dih_d$ & $C_d^{\perp,\subset}$ & \begin{tabular}{ll} $(0,2)$ & $v_2, v_3$ \\ $(0,d+2)$ & $v_3$ \\  $(d,d+2)$ & $v_1, v_2$
%\end{tabular}\\
%\hline
%$\Dih_2$ & $C_2^{\perp,\subset}$ & \begin{tabular}{ll} $(0,0)$ \phantom{$+2$}  & $v_1$ \\ $(0,2)$ & $v_2, v_3$ \\ $(2,2)$ & $v_1,v_2$ \end{tabular}\\
%\hline
%$\T$ & $\hat\T$ & \begin{tabular}{ll} $(0,8)$ \phantom{$+2$} & $v_3$\end{tabular}\\
%\hline
%$\O$ & $\T^*$ & \begin{tabular}{ll} $(0,2)$ \phantom{$+2$} & $v_1,v_2$ \\ $(0,14)$ & $v_3$ \\ $(6,8)$ & $v_1$ \\ $(12,14)$ & $v_1$\end{tabular}\\
%\hline
%$\I$ & $\hat\I$ & \begin{tabular}{ll} $(0,2)$ \phantom{$+2$} & $v_1,v_2$\\ $(0,32)$ & $v_3$ \\ $(30,32)$ & $v_1$\end{tabular}
%
%\end{longtable}
%\end{figure}

\end{document}